\DeclareMathAlphabet{\mathdutchcal}{U}{dutchcal}{m}{n} %%%different caligraphic together with reg one; has small fonts too.  
\SetMathAlphabet{\mathdutchcal}{bold}{U}{dutchcal}{b}{n}
\DeclareMathAlphabet{\mathdutchbcal}{U}{dutchcal}{b}{n}
\numberwithin{equation}{section}
\newtheorem{theorem}{Theorem}[section]
\newtheorem{prop}[theorem]{Proposition}
\newtheorem{lemma}[theorem]{Lemma}
\newtheorem{cor}[theorem]{Corollary}
\newtheorem{defn}[theorem]{Definition}
\newtheorem{remark}[theorem]{Remark}
\newenvironment{rem}{\begin{remark}\rm}{\end{remark}}
\newtheorem{example}[theorem]{Example}
\newtheorem*{theorem*}{Theorem}
\newtheorem*{splittingtheorem*}{Splitting Theorem}
\def\be{\begin{equation}}
\def\ee{\end{equation}}
\def\bear{\begin{eqnarray}}
\def\eear{\end{eqnarray}}
\def\best{\begin{eqnarray*}}
\def\eest{\end{eqnarray*}}
 \def\non{\noindent}
 \def\bd{\partial}
 \def\ra{\rightarrow}
\def\lra{\longrightarrow}
\def\rg{\rangle}
\def\lg{\langle}
\def\r#1{\right#1}
\def\l#1{\left#1}
\def\ti{\times}
\def\del{\overline \partial}
\def\ma#1{\mathop {#1} \limits}
\def\al{\alpha}
\def\de{\delta}
\def\ep{\varepsilon}
\def\la{\lambda}
\def\si{\sigma}
\def\Si{\Sigma}
\def\De{\Delta}
\def\La{\Lambda}
\def\ra{\rightarrow}
\def\Z{{ \mathbb Z}}
\def\R{{ \mathbb  R}}
\def\P{{\mathbb  P}}
\def\Q{{ \mathbb Q}}
\def\cx{{ \mathbb C}}
\def\cal{\mathcal}
\def\C{{\mathcal C}}
\def\D{{\mathcal D}}
\def\E{{\mathcal E}}
\def\F{{\mathcal F}}
\def\L{{\mathcal L}}
\def\M{{\mathcal M}}
\def\oM{\overline{\mathcal M}}
\def\N{\mathcal{N}}
\def\O{\mathcal{O}}
\def\U{{\mathcal U}}
\def\T{{\mathcal T}}
\def\mo{\mathfrak{o}} 
\def\wt#1{\widetilde{#1}}
\def\wh#1{\widehat{#1}}
\def\ov#1{\overline{#1}}
\def\cok{\mathrm{coker\,}} % ker already predefined
\def\ind{\mathrm{ind\,}}
\def\Ind{\mathrm{Ind\,}}
\def\Aut{\mathrm{Aut}}
\def\Hom{\mathrm{Hom}}
\def\Fred{\mathrm{Fred}}
\definecolor{darkgreen}{rgb}{0, 0.5, 0.1} 
\definecolor{aqua}{cmyk}{0,1,0, .3}
\definecolor{teal}{rgb}{0.2, .8, .7}
\def\ev{\mathrm{ev}}
\newcommand\cHH{\check{\mathrm{H}}}  %  for cech homology
 \def\JV{\mathcal {JV}}
\def\vir{{\rm vir}}
 \def\cP{\mathcal{P}}
 \def\fM{{\mathdutchcal{M}}}
\def\fF{{\mathdutchcal{F}}}
\def\fD{{\mathcal{D}}}
 \def\!{|\hspace{-1.5pt}|\hspace{-1.5pt}|}
\let\oldtocsection=\tocsection
\let\oldtocsubsection=\tocsubsection
\renewcommand{\tocsection}[2]{\hspace{0em}\oldtocsection{#1}{#2}}
\renewcommand{\tocsubsection}[2]{\hspace{1em}\oldtocsubsection{#1}{#2}}
\title{Splitting formulas for the local real Gromov-Witten invariants}
\author{Penka Georgieva}
\address{Sorbonne Universit\'e, Universit\'e de Paris, CNRS, Institut de Math\'ematiques de Jussieu - Paris Rive Gauche, F-75005 Paris, France}
\email{penka.georgieva@imj-prg.fr}
\author{Eleny-Nicoleta Ionel}
\address{Department of  Mathematics,  Stanford University}
\email{ionel@math.stanford.edu}
\begin{document}

\begin{abstract} Motivated by the real version of the Gopakumar-Vafa conjecture for 3-folds, the authors introduced in \cite{GI} the notion of 
local real Gromov-Witten invariants associated to local 3-folds over Real curves. This article is devoted to the proof of a splitting formula for these invariants under target degenerations. It is used in \cite{GI} to show that the invariants give rise to a 2-dimensional Klein TQFT and to prove the local version of the real Gopakumar-Vafa conjecture. 
%The main result of this article is a splitting formula for the virtual fundamental class of the real relative moduli space when the target is a symmetric Riemann surface. 
\end{abstract}

\maketitle 

\vspace{-.2in}
\tableofcontents

%%%%%%%%%%%%%%%
%\setcounter{equation}{0}
\setcounter{section}{-1}

%%%%%%%%%%%%%%%
%%%%%%%%%%%%%%%
%%%%%%%%%%%%%%%%
\vspace{-.4in}

\section{Introduction} 
\bigskip 

A central problem in Gromov-Witten theory is understanding the structure and properties of the Gromov-Witten invariants. Motivated by the real version of the Gopakumar-Vafa conjecture and the work of Bryan and Pandharipande \cite{bp1}, the authors introduced and studied in \cite{GI} the notion of local real Gromov-Witten invariants. In this article we prove the splitting formula for these invariants, as outlined in \cite[\S4]{GI} and used to establish the structural results of \cite{GI}. 

\medskip

A symmetric (or Real\footnote{We use Real with capital R for spaces or bundles with anti-holomorphic involutions, following Atiyah.}) Riemann surface is a (possibly disconnected and marked) Riemann surface $\Si$ together with an
anti-holomorphic involution $c:\Si\ra \Si$ (also referred to as a real structure). Throughout this paper we restrict attention to the case in which none of the marked points are real, and we denote by  $V$ the collection of marked points of $\Si$. Consider  the real relative moduli space 
$$
\ov\M^{\R}_{d,\chi,\vec\mu}(\Si, V)
$$
of degree $d$ real maps 
$f:C\ra \Si$ from (possibly disconnected) domains of Euler characteristic $\chi$ and having ramification profile $\vec\mu$ over the divisor $V$, reviewed in \S\ref{S.rel.moduli}. Here $\vec\mu$ is a collection of partitions of $d$, one for each pair of conjugate points in $V$. These  moduli spaces are  orientable, but not {\em a priori} canonically oriented; their orientation depends on a choice of (twisted) orientation data $\mo$ on $\Si$, cf.  \S\ref{S.orient}. 
\smallskip

When $L\ra \Si$ is a holomorphic line bundle,  
the \textsf{local RGW invariants} are defined as the pairing 
\bear\label{RGW.chern.0}
RGW^{c,\mathfrak o}_{d,\chi}(\Si,L)_{\vec\mu}=\frac{1}{|\Aut(\vec\mu)|}\int_{[\ov\M^{\R}_{d,\chi,\vec\mu}(\Si, V)]^{\vir, \mo}}c_{b/2}(-\ind \del_L)
\eear
between the appropriate Chern class of the index bundle $\mathrm{ Ind}\; \del_L$ (regarded as an element in the usual complex $K$-theory) and the virtual fundamental class of the real relative moduli space. 
Here $b$ is the (virtual) dimension\footnote{Unless otherwise noted, all dimensions appearing in this paper are real dimensions rather than complex dimensions.} of the moduli space and the presence of the factor $|\Aut(\vec\mu)|$ is explained in Remark~\ref{R.moduli.ordered}. In particular, \eqref{RGW.chern.0} are the coefficients of the generating series \cite[(2.24)]{GI}.  
\medskip

The pairing \eqref{RGW.chern.0} is invariant under smooth deformations of the target $\Si$ (as a symmetric marked curve) together with $L$ and 
$\mo$; in particular it is independent of the holomorphic structure on $L$. By considering a family $\cup_s \Si_s$ of  smooth symmetric marked curves degenerating to a symmetric nodal curve $\Si_0$ (with a conjugate pair of nodes), the splitting formula proved in this paper relates the local RGW invariants of a smooth fiber $\Si_s$ to those of the normalization of the singular fiber $\Si_0$. 
\medskip

Throughout this paper, $\Si_0$ denotes a symmetric nodal curve with a single pair of conjugate nodes and $r$ pairs of conjugate marked points; in particular, we assume that $\Si_0$ has no real special points. We always denote by 
\bear\label{0.Fam}
\F_{/\De}=\ma\cup_{s\in \De} \Si_s
\eear
a (flat) family of deformations of $\Si_0$ parametrized by the unit disk $\De$ as described in \S\ref{S.fam.curves}. The fibers over $s\ne 0$ are smooth (marked, symmetric) curves  
$\Si_s$. We also denote by 
\best
\wt \Si\ra \Si_0
\eest 
the normalization (or resolution) of the singular fiber $\Si_0$ (as a symmetric marked curve).  We denote by $c_s$, $\wt c$ and $V_s$, $\wt V$ the corresponding real structures and markings.   
\smallskip

If $L\ra \F$ is a complex line bundle over the family, we denote by $L_s$ and $\wt L$ its pullback to $\Si_s$ and respectively 
$\wt \Si$. There is also a notion of twisted orientation data $\mo_\F$ on the family $\F$, which pulls-back to give twisted orientation data 
$\mo_s$ on $\Si_s$ and $\wt\mo$ on $\wt \Si$, cf. \S\ref{S.tw.exist}. 

With these preliminaries, the main result of this paper is the proof of the following theorem.

\begin{theorem}[RGW Splitting Theorem]\label{T.gluing} Let  $\F=\cup_s \Si_s$ be a family of deformations of $\Si_0$ and $\wt \Si$ be its normalization as described above. Fix $L\ra \F$ a complex line bundle and $\mo_\F$ a twisted orientation on $\F$.  Then the local RGW invariants \eqref{RGW.chern.0} associated to the smoothing $\Si_s$ are related to those associated to the normalization $\wt \Si$ as follows: 
	\bear\label{RGW.splits.coef}
	RGW_{d, \chi}^{c_s, \mo_s}(\Si_s, L_s)_{\vec \mu} = \sum_{\la\vdash d} \zeta(\la) RGW_{d, \chi+4\ell(\la)}^{\wt c,\wt \mo}(\wt \Si, \wt L)_{\vec\mu, \la, \la} 
	\eear
	for  all $s\neq 0$ and all $d$, $\chi$, and $\vec \mu$. 
Here $\la$ is a partition of $d$, $\ell(\la)$ is its length and the coefficient $\zeta(\la)$ is given by \eqref{mult.la}. 
\end{theorem}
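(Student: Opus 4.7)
The plan is to prove Theorem~\ref{T.gluing} via a degeneration argument applied to the versal family $\F/\De$. Since the local RGW invariants are deformation invariant under smooth deformations of $(\Si, L, \mo)$, the strategy is to interpret the invariant of a smooth fiber $\Si_s$ as a limit as $s \to 0$, and then to identify the contributions to this limit with invariants of the normalization $\wt\Si$. Concretely, I would construct a relative real moduli space $\ov\M^{\R}_{d,\chi,\vec\mu}(\F/\De, V)$ of real maps into the total family, endowed with a virtual class compatible with both the smooth and central fibers, and use the fact that pairing with the Chern class of the index bundle $\mathrm{Ind}\,\del_L$ over this family produces a number independent of $s$.

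The heart of the argument is analyzing the central fiber. A real map $f: C \to \Si_0$ meeting the conjugate pair of nodes has preimages forming two conjugate collections of points, and the local model around each node exhibits the ramification profile there as a partition $\la \vdash d$, with the conjugate node automatically carrying the conjugate profile (also $\la$). Pulling back to the normalization and promoting these preimages to marked points produces a real map into $\wt\Si$ with ramification profile $(\vec\mu, \la, \la)$. Because we are resolving a conjugate pair of nodes with $\ell(\la)$ preimage points on each side, the Euler characteristic increases by $2 \cdot 2\ell(\la) = 4\ell(\la)$, matching the shift in \eqref{RGW.splits.coef}.

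Next, I would compare virtual fundamental classes and index bundles on the two sides. The normalization exact sequence identifies $\mathrm{Ind}\,\del_{L_0}$ with the corresponding index bundle on $\wt\Si$ twisted by evaluation at the node preimages, so its Chern classes pull back compatibly under the decomposition of the central-fiber moduli space. The coefficient $\zeta(\la)$ then arises from the standard analytic gluing: it packages the automorphism factor $|\Aut(\la)|$, the multiplicities $\prod \la_i$ coming from the local smoothing model $xy = s^{\la_i}$ at each branch, and the contribution from the conjugate second node. I would derive this precisely by adapting a Li--Ruan/Ionel--Parker style gluing theorem to the symmetric setting, where the two branches of each local node model are paired with their conjugates under $c_s$.

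The main obstacle I anticipate is the orientation bookkeeping. Unlike the complex case, where the moduli spaces are canonically oriented, here the orientations depend on the twisted orientation data, and one must verify that the orientation induced on $\ov\M^{\R}_{d,\chi,\vec\mu}(\Si_s, V_s)$ by $\mo_s$ agrees with the orientation induced on each central-fiber stratum by $\wt\mo$ together with the canonical orientation of the smoothing parameter. This requires a local calculation at the conjugate pair of nodes, showing that the restriction $\mo_\F|_{\Si_0}$ paired with the gluing data matches both $\mo_s$ and $\wt\mo$ consistently, building on the framework of \S\ref{S.tw.exist}. The remaining ingredients -- surjectivity of the gluing map, absence of stray components escaping to the node, and correct bookkeeping of multi-covers -- will run in parallel to the complex case but must be carried out equivariantly with respect to the real structures $c_s$ and $\wt c$.
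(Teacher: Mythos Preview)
Your outline is essentially the paper's own strategy: the argument is factored through a VFC Splitting Theorem (Theorem~\ref{T.splitVFC}), proved via the thin compactification method with Ruan-Tian perturbations and an auxiliary cover $\wh\fM(\F_{/I})$ resolving the branching of the family moduli space over $s=0$, and then combined with Lemma~\ref{L.int.pull.back} on the pullback of the Chern-class integrand under the attaching map. The orientation comparison you flag as the main obstacle is exactly Proposition~\ref{P.phi.orient.2}.

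One correction: there is \emph{no} separate ``contribution from the conjugate second node'' in the coefficient $\zeta(\la)$. The real structure forces the gluing data at $x^-$ to be conjugate to that at $x^+$, so the combinatorics are counted only once; the factor is precisely $\zeta(\la)=|\Aut(\la)|\cdot\prod_i\la_i$, not its square. In the paper this arises as the product of the degree $|\Aut(\la)|$ of the attaching map $\Phi$ (Lemma~\ref{L.phi.orient}) and the degree $\prod_i\la_i=\zeta(\la)/|\Aut(\la)|$ of the root-cover $q_0$ (Lemma~\ref{C.q.deg}), with the remaining $|\Aut(\la)|^2$ in \eqref{split.VFC} absorbed by the normalization factor $|\Aut(\vec\mu,\la,\la)|$ in the definition of $RGW$ on $\wt\Si$. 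If you carried an extra factor from the conjugate node you would get the wrong answer.
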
 
Theorem \ref{T.gluing}  is an extension to the real setting of the splitting formula of Bryan-Pandharipande \cite[Theorem~3.2]{bp1} proved in \cite[Appendix A]{bp-TQFT}. A direct consequence of Theorem~\ref{T.gluing} is \cite[Theorem~4.1]{GI} cf. Corollary~\ref{C.rgw} and the discussion after it. This is used in \cite{GI} to  show that the local RGW invariants give rise to a 2-dimensional Klein TQFT and to prove the local version of the real Gopakumar-Vafa conjecture.
\medskip

The proof of Theorem~\ref{T.gluing} is a consequence of the splitting properties of the total Chern class of the index bundle established by Bryan and Pandharipande, combined with a splitting formula for the virtual fundamental class of the real relative moduli space. The proof of the latter  -- Theorem~\ref{T.splitVFC} below -- occupies the majority of this paper. The basic idea is an adaptation to the real setting of the classical proof of the splitting theorem \cite{lr, li, ip-sum}.

\medskip 
We consider a family of real moduli spaces 
\bear\label{0.fam}
\ov\M_{d,\chi,\vec\mu}(\F_{/\De})\;\ma=\; \ma\cup_{s\in \De} \ov\M^{\R}_{d,\chi,\vec\mu} (\Si_s, V_s) 
\eear
associated to the family $\cal F$ of targets, cf. \eqref{mod.fam.la}. For every partition $\la$ of $d$, we also consider the real relative moduli space
\best
\ov\M^\R_{d,\chi,\vec\mu,\la,\la} (\wt \Si, \wt V)
\eest
associated to the normalization $\wt \Si$ of $\Si_0$; note that $\wt \Si$ has two additional pairs of conjugate marked points (corresponding to the pair of conjugate nodes of $\Si_0$) and we restrict to the case when the ramification profile is $\la$ over all these additional points. There is a natural map 
\bear\label{0.phi}
\Phi: \ma\bigsqcup_{\la\vdash d} \ov\M^\R_{d, \chi+4\ell(\la), \vec\mu, \la,\la}(\wt \Si, \wt V) \ra \ov\M^\R_{d, \chi, \vec\mu} (\Si_0, V_0)
\eear
which attaches pairs of marked points of both the domain and target to produce nodes, cf. \S\ref{S.nod.target}. 

\begin{theorem}[VFC Splitting Theorem] \label{T.splitVFC} 
With the notation above, for every $d$, $\chi$, and $\vec \mu$ and all  $s\in \De\setminus 0$ the equality 	
\bear\label{split.VFC}
	[\ov \M^{\R}_{d, \chi,\vec \mu }(\Si_s, V_s)]^{\vir, \mo_s}
	=\sum_{\la\vdash d} \frac{\zeta(\la)}  {|\Aut (\la)|^2} 
	\Phi_*[\ov \M^{\R}_{d, \chi+4\ell(\la),\vec \mu, \la,\la }(\wt\Si, \wt V)]^{\vir, \wt\mo}
	\eear
holds in the rational Cech homology of the family \eqref{0.fam} of moduli spaces. 
\end{theorem}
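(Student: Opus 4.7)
The strategy is to adapt to the real setting the symplectic sum / degeneration formula for the virtual fundamental class, as developed in the complex case in \cite{ip-sum, li, lr}. The proof proceeds in three main steps: construction of a virtual fundamental class on the total family \eqref{0.fam}; a deformation-invariance argument relating the fiber VFC at generic $s$ to that at $s=0$; and identification of the $s=0$ fiber class through the gluing map $\Phi$, together with the computation of the accompanying multiplicity.

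For the first step I would construct a relative virtual fundamental class
$$
[\ov\M^\R_{d,\chi,\vec\mu}(\F_{/\De})]^{\vir,\mo_\F}
$$
on the total moduli space \eqref{0.fam}. Near the conjugate pair of nodes of $\Si_0$, the family $\F$ is locally modeled on $\{xy=s\}$ together with its conjugate copy, with $s$ the deformation parameter. To accommodate maps to the singular fiber one restricts to the \emph{pre-deformable} stratum: real maps whose ramification profiles on the two sides of each node of $C$ match those of the corresponding node of $\Si_0$. Applying the obstruction-theory construction developed in \S\ref{S.rel.moduli} in this family setting produces a VFC on the entire family whose restriction to each smooth fiber $\Si_s$ (for $s\neq 0$) agrees with the fiberwise class $[\ov\M^\R_{d,\chi,\vec\mu}(\Si_s, V_s)]^{\vir,\mo_s}$.

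The second step is a continuity statement: because the total moduli space is proper over $\De$ and carries a VFC that restricts correctly, the fiberwise classes for $s\neq 0$ and for $s=0$ represent the same element in the rational Cech homology of the total family. The problem thus reduces to expressing the $s=0$ class in terms of the normalization. For this, every pre-deformable real map to $\Si_0$ is obtained, up to a finite choice of matching data at the nodes, by gluing a real map to $\wt\Si$ with matching ramification profile $\la\vdash d$ at the two added pairs of conjugate marked points; stratifying by $\la$ yields the decomposition \eqref{0.phi}, on each stratum of which $\Phi$ is generically finite onto its image.

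The principal technical obstacle is the computation of the multiplicity $\zeta(\la)/|\Aut(\la)|^2$ on the $\la$-stratum. The denominator $|\Aut(\la)|^2$ reflects the labeling of the $\ell(\la)$ added source marked points at each of the two added conjugate pairs of marked points of $\wt\Si$ on the normalization side, versus the absence of such a labeling at the corresponding nodes of $\Si_0$. The numerator $\zeta(\la)=|\Aut(\la)|\prod_i\la_i$ arises from the local deformation-obstruction analysis at a conjugate pair of nodes: each ramification order $\la_i$ contributes a factor $\la_i$ from the $\la_i$-to-$1$ cover of the smoothing parameter $s$ by a local gluing parameter $\zeta_i$ satisfying $\zeta_i^{\la_i}=s$, combined with the ghost-component automorphism factor $|\Aut(\la)|$. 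Because the real involution on $\F$ pairs the two nodes of $\Si_0$, the gluing parameter at one node is the conjugate of that at its partner, so the gluing multiplicity appears only once rather than being squared. Finally, the compatibility of the twisted orientations $\wt\mo$ and $\mo_\F$ under $\Phi$ follows from \S\ref{S.tw.exist} together with a sign analysis of the real gluing procedure.
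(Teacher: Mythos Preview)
Your outline follows the paper's strategy in broad strokes, but it elides precisely the technical point that the paper works hardest to resolve, and your multiplicity bookkeeping is not quite right.

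The central gap is your second step. You assert that ``because the total moduli space is proper over $\De$ and carries a VFC that restricts correctly, the fiberwise classes for $s\neq 0$ and for $s=0$ represent the same element.'' This does not follow. The paper does \emph{not} construct a VFC on the total family $\ov\M(\F_{/\De})$; it works instead with the open subsets $\wt\fM(\Si_s)_\nu$ of codimension $\le 1$ strata, cut out transversally by Ruan--Tian perturbations $\nu$. The obstruction is that $\wt\fM(\F_{/I})_\nu=\cup_{s\in I}\wt\fM(\Si_s)_\nu$ is \emph{branched} over $s=0$: the local model near a node with contact order $\la_i$ is $a_i\tau_i^{\la_i}=s$, so there are $\prod_i\la_i$ sheets of $\wt\fM(\Si_s)_\nu$ coming together at each point of $\wt\fM(\Si_0)_\nu$. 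Hence $\wt\fM(\F_{/I})_\nu$ is not a topological cobordism, and a naive deformation-invariance argument fails. The paper resolves this by constructing an auxiliary cover $\wh\fM(\F_{/I})$ (\S\ref{S.cobord}), whose fiber $\wh\fM(\Si_0)$ records a choice of $\la_i$-th root $\al_i$ of the product of leading coefficients at each node; on this cover the local model becomes $\al_i\tau_i=s^{1/\la_i}$, and one proves (Proposition~\ref{P.cob}) that $\wh\fM(\F_{/I})_\nu$ is a genuine oriented topological cobordism between $\wt\fM(\Si_{s_0})_\nu$ and $\wh\fM(\Si_0)_\nu$. This step is the substance of the proof; your proposal simply asserts its conclusion.

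Your multiplicity breakdown also does not match. The coefficient $\zeta(\la)/|\Aut(\la)|^2$ arises as $(\deg q_0)/(\deg\Phi)$, where $q_0:\wh\fM_\la(\Si_0)\to\wt\fM_\la(\Si_0)$ has degree $\prod_i\la_i=\zeta(\la)/|\Aut(\la)|$ (from the root choices; Lemma~\ref{C.q.deg}), and $\Phi:\wt\fM_\la(\wt\Si)\to\wt\fM_\la(\Si_0)$ is the quotient by the \emph{diagonal} $\Aut(\la)$ action (not $\Aut(\la)^2$), hence has degree $|\Aut(\la)|$ (Lemma~\ref{L.phi.orient}). There is no ``ghost-component automorphism factor'' here: all components carry nontrivial maps by hypothesis. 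Finally, the orientation compatibility you mention in passing is itself a nontrivial computation (Proposition~\ref{P.phi.orient.2}): the two natural orientations on $\wt\fM(\Si_0)$---one descended from $\wt\fM(\wt\Si)$ via $\Phi$, the other from the cobordism---are compared via the normalization SES and a sign $(-1)^\ell$ cancels against the corresponding sign from \cite[Proposition~4.18]{GZ2}.
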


We now outline the key steps involved in the proof of the splitting formula \eqref{split.VFC}. 
In this paper we use Ruan-Tian perturbations adapted to our setting in combination with the thin compactification method of \cite{ip-thin} as summarized in \S\ref{S.thin.cpt}. This approach allows us to use standard arguments after adapting them to the real setting.

\medskip
The notion of Ruan-Tian perturbations $\nu$ extends to the family $\cal F=\cup_s \Si_s$ of targets in a way that is compatible with the real structures and the divisors, cf. \S\ref{S.RT.perturb}. In particular, every RT perturbation $\nu$ on the family is real and pulls back to a RT perturbation on $\Si_s$ and $\wt \Si$, compatible with their divisors. Denote by $\cP$ the space of such RT perturbations $\nu$ on the family $\cal F$.

As $\nu$ varies over the parameter space $\cP$, we get various families of moduli spaces over $\cP$, cf. \S\ref{S.VFC.rel}. Denote by $\ov \fM(\Si_s)$ and $\ov \fM(\wt \Si)$ the family (over $\cP$) of \emph{real relative} moduli spaces associated to the fiber $\Si_s$ and respectively the normalization $\wt \Si$ of the nodal fiber $\Si_0$. Fix a segment $I=[0, s_0]\subset\De$, where $s_0\ne 0$. As in the usual proof of the splitting formula, we consider a family of moduli spaces 
\best%\label{maps.to.F.2}
\begin{tikzcd}
\ov \fM(\F_{/I})= \ma \cup_{s\in I}\ov \fM(\Si_s). 
\end{tikzcd}
\eest
The attaching map \eqref{0.phi} extends to a proper map 
\best 
\begin{tikzcd}
\ov \fM(\wt \Si)\ar[r, "\Phi"]&\ov \fM(\Si_0). 
\end{tikzcd}
\eest
These are all families over the parameter space $\cP$ of RT perturbations $\nu$ and we denote by $\ov \fM(-)_\nu$ the corresponding fiber of $\ov \fM(-)$.
\smallskip

In section \ref{S.VFC}, we first show that 
\bear\label{0.fam.moduli}
\ov \fM(\Si_{s_0}), \quad \ov \fM(\wt \Si),\quad \mbox{ and } \quad   \ov \fM(\Si_{0})
\eear
are thinly compactified families over the parameter space $\cP$ (in the sense of \cite{ip-thin}) and therefore carry a VFC, cf. Theorem~\ref{T.VFC.rel.2}. This involves proving that for generic $\nu$ all the strata of the moduli spaces $\ov \fM(-)_\nu$ are cut transversally. There is one subtlety: when the real locus of the target is nonempty, it also involves gluing across the codimension 1 strata to show that for generic $\nu$, the corresponding subsets 
\best
\wt \fM(\Si_{s_0})_\nu, \quad \wt \fM(\wt \Si)_\nu,\quad \mbox{ and } \quad    \wt \fM(\Si_{0})_\nu, 
\eest
(defined as the union of codimension at most 1 strata) are orientable topological  manifolds (without boundary). 
As a consequence of transversality, generically the union of the codimension at least 2 strata of $\ov \fM(-)_\nu$ have homological codimension at least 2 and thus the thin compactification method applies to define the VFC. 
\smallskip

To establish the relation \eqref{split.VFC} it suffices to compare the fundamental classes 
\best
[\wt \fM(\Si_{s_0})_\nu], \quad \Phi_*[\wt \fM(\wt \Si)_\nu], \quad \mbox{ and } \quad 
[\wt \fM(\Si_{0})_\nu]\quad \mbox{ in } \quad \cHH_*(\wt \fM(\F_{/I})_\nu; \Q)
\eest
for generic $\nu\in \cP$. These classes depend on the choice of orientation of the moduli spaces, which in turn is determined by a choice of twisted orientation data $\mo_\F$ on $\F$, cf. \S\ref{S.orient}. There are two natural perspectives on the moduli space associated to the nodal target $\Si_0$, one coming from the normalization and the other from the deformation, cf. \S\ref{S.mod.nodal}. Each perspective gives rise to a natural orientation (after fixing $\mo_\F$) and a key step is showing that these two orientations agree, proved in \S\ref{S.comp.or}.
\smallskip

We prove in \S\ref{S.mod.nodal} that for generic $\nu\in \cP$ the attaching map $\Phi$ restricts to a proper, finite degree map between the two oriented topological manifolds $\wt \fM(\wt \Si)_\nu$ and $\wt \fM(\Si_{0})_\nu$ and thus 
\bear\label{0.phi.deg}
\Phi_*[ \wt \fM(\wt \Si)_\nu]=\deg \Phi \cdot [\wt \fM(\Si_{0})_\nu] \quad \mbox{ in } \quad \cHH_*(\wt \fM(\Si_0)_\nu; \Q).
\eear
Lemma~\ref{L.phi.orient} expresses the degree of $\Phi$ in terms of specific combinatorial factors.
\smallskip

Theorem~\ref{T.splitVFC} would follow if $\wt \fM (\F_{/I})_\nu=\ma\cup_{s\in I} \wt\fM(\Si_s)_\nu$ was generically a topological cobordism; however, as in the usual splitting formula, in general it is branched along $s=0$. After passing to a cover $\wh \fM(\Si_0)$ of $\wt \fM(\Si_0)$, we construct in \S\ref{S.cobord} an auxiliary space 
\best
\wh \fM (\F_{/I})= \wh\fM(\Si_0)\cup \ma \cup_{s\ne 0} \wt\fM(\Si_s). 
\eest 
This space comes with a proper continuous projection to $\wt \fM (\F_{/I})$ which restricts to a map $q_0 : \wh \fM(\Si_0) \ra\wt  \fM(\Si_0) $. We prove in \S\ref{S.cobord} that for generic $\nu\in \cP$, 
\begin{enumerate}[(i)]
\item $q_0$ restricts to a proper map between two oriented manifolds $\wh \fM(\Si_0)_\nu$ and $\wt  \fM(\Si_0)_\nu$.  Thus 
\best
(q_0)_*[\wh \fM(\Si_0)_\nu]= (\deg q_0)\cdot [\wt \fM(\Si_0)_\nu] \quad \mbox{ in } \quad \cHH_*(\wt \fM(\Si_0)_\nu; \Q);
\eest
\item $\wh \fM (\F_{/I})_\nu$ is an oriented topological cobordism between $\wt \fM(\Si_{s_0})_\nu$ and $\wh \fM(\Si_0)_\nu$. Thus 
\best
[\wh \fM(\Si_0)_\nu] = [ \wt \fM(\Si_{s_0})_\nu]\quad \mbox{ in } \quad \cHH_*(\wh \fM (\F_{/I})_\nu; \Q). 
\eest
\end{enumerate}
This allows us to relate $[\wt \fM(\Si_{s_0})_\nu]$ to $[\wt \fM(\Si_{0})_\nu]$ up to specific combinatorial factors. Combined with \eqref{0.phi.deg}, the precise formulas for the degrees of $\Phi$ and $q_0$, and the properties of thin compactifications, this implies \eqref{split.VFC}, completing the proof of  Theorem~\ref{T.splitVFC}, cf. \S\ref{S.pf.splitVFC}. 
\medskip

\noindent {\bf Outline of paper. } In Section~\ref{S.rev} we review notation and background; a summary of the thin compactification method is included in \S\ref{S.thin.cpt} and at the beginning of \S\ref{S.VFC.mod}. In Section~\ref{S.fam.sect} we review the construction of the real relative moduli space and extend it to a family of targets.  
In Sections~\ref{S.constr.vfc} and \ref{S.VFC.mod} we construct the relevant VFCs by turning on Ruan-Tian perturbations to obtain transversality 
strata-wise and applying the thin compactification method. 
The moduli space of maps to a nodal target is analyzed in Section~\ref{S.mod.nodal}, including several equivalent descriptions of the linearization and its orientation sheaf. The auxiliary space $\wh \fM (\F_{/I})$ is constructed and analyzed in Section~\ref{S.cobord}.  The orientations of the various moduli spaces involved are discussed in Section \ref{S.orient}. The main result of Section~\ref{S.orient} is Proposition~\ref{P.phi.orient.2} which compares the two natural orientations on the moduli space associated to the nodal target. Theorems~\ref{T.splitVFC} and \ref{T.gluing} are proved in Sections \ref{S.pf.splitVFC} and \ref{S.pf.RGW}.  
The appendices provide more details on the various linearizations considered and the relations between them.

\medskip

\noindent {\bf Acknowledgments.} This paper is partially based upon work supported by the NSF grant DMS-
1440140 while the authors were in residence at MSRI during the Spring 2018 program “Enumerative
Geometry Beyond Numbers”. The authors would like to thank MSRI for the hospitality. P.G. would also 
like to thank the IHES for the hospitality during Fall 2018 and E.I. the Institut de Math\'ematiques de Jussieu - Paris Rive Gauche 
during Spring 2020 when this paper was completed. The research of E.I. is partially supported by the NSF grant DMS-1905361 and that of P.G. by the ANR grant ANR-18-CE40-0009 and the ERC Consolidator Grant ROGW-864919. We thank the referees for their meticulous reviews and numerous insightful suggestions.

\medskip
%%%%%%%%%%%%%%%
%%%%%%%%%%%%%%%
%%%%%%%%%%%%%%%%

 \section{Background and notation} \label{S.rev}
 
\medskip

\subsection{Notations for partitions} Let $d\in\Z$. A partition $\la$ of $d$, denoted $\la\vdash d$, is a finite sequence of positive integers 
$\la=(\la_1\ge \dots\ge \la_\ell)$  such that the sum of its parts, denoted 
$|\la|$, is equal to $d$. The number of its parts, called  length of the partition,  is denoted $\ell(\la)$. We can also write a partition in the form $\la=( 1^{m_1} 2^{m_2} \dots )$ where $m_k$ is the number of parts of $\la$ equal to $k$. Then 
\best%\label{part.deg}
d=|\la|= \sum_{i=1}^\ell \la_i =\sum_{k=1}^\infty  k m_k \quad \mbox{ and } \quad \ell(\la)=\ell=\sum_{k=1}^\infty m_k. 
\eest
Let $\Aut(\la)$ be the automorphism group of $\la$; its order is
\bear\label{aut.la}
|\Aut(\la)| = \prod m_k!.
\eear
We also consider the following combinatorial factor
\bear\label{mult.la}
\zeta(\lambda) = \prod m_k! k^{m_k}.
\eear

 \subsection{The local RGW invariants} A symmetric (or Real) curve $(C,\si)$ is a closed, oriented, possibly nodal, possibly disconnected, possibly marked complex curve $C$ together with an anti-holomorphic involution $\si$, called the real structure. In this paper we only consider the case when all marked points of $C$ come in conjugate pairs.
  
Let $(X,\omega)$ be a symplectic manifold and $\phi$ an anti-symplectic involution on $X$. A \textsf{real map}
 \bear\label{f.real}
 f:(C,\si)\longrightarrow (X,\phi)
 \eear
is a map $f: C \ra X$ such that $f\circ \si = \phi\circ f$. In this paper we restrict to almost complex structures $J$ on $X$ which are real i.e. satisfy $\phi^*J=-J$ and are tamed by $\omega$. Denote by 
\bear\label{moduli.X}
\ov\M_{d, \chi, \ell}^{\R}(X)  
\eear
the (absolute) \textsf{real moduli space} consisting of equivalence classes (up to reparametrization of the domain) of stable degree $d$ $J$-holomorphic real maps \eqref{f.real} from symmetric curves of Euler characteristic $\chi$ and $\ell$ pairs of conjugate marked points.   

 \smallskip
Throughout this paper we restrict ourselves to target manifolds $X$ which are (families of)  symmetric curves; in this case we denote the target curve by $(\Si, c)$ and the family of such by $\F$, see \S\ref{S.fam.curves}. 
 \medskip
 %%%%%%%%%
 
Assume $(\Si, c)$ is a (smooth) symmetric curve with $r$ (ordered) pairs of conjugate marked points 
 \bear\label{marked.points}
V =\{ (x_1^+,x_1^-), \dots, (x_r^+,x_r^-)\}, \quad \mbox{ where} \quad  x_i^-=c(x_i^+),
 \eear 
and no other marked points. For a collection $\vec \mu=(\mu^1, \dots, \mu^r)$   of $r$ partitions of $d$, denote by 
\bear\label{rel.M.space}
	\ov  \M^{\R}_{d, \chi, \vec \mu}  (\Si, V)
\eear 
the real relative moduli space reviewed in \S\ref{S.rel.moduli}. Note that when $\Si$ has no marked points $V=\emptyset$ and the real relative moduli space \eqref{rel.M.space} becomes an absolute real moduli space.

The virtual dimension of the moduli space \eqref{rel.M.space} is equal to
\bear\label{dim.M=b}
b=d\chi(\Si)- \chi-2\de(\vec \mu)\,, \quad \text{ where }\quad  \de(\vec \mu)= \sum_{i=1}^r  (d-\ell(\mu^i)),  
\eear
cf. \eqref{A.v.dim=b}.

In general, the (absolute) real moduli spaces \eqref{moduli.X} are not always orientable, but there are some criteria that ensure orientability.  In  \cite[Definitions~1.2 and 5.1]{gz} a notion of real orientation was introduced. When the target $X$ is odd complex dimensional, the existence of a real orientation on $TX$ ensures that the real moduli spaces $\ov\M^\R_{d, \chi, \ell}(X)$ are orientable, and a choice determines a canonical orientation of the moduli space cf. \cite[Theorem 1.3]{gz}. This was  extended in \cite[Definitions~2.1 and A.1]{GI} to the notion of a twisted orientation, reviewed in Definition~\ref{TRO.gen} below, and used to orient the real relative moduli spaces. While a real orientation in the sense of \cite[Definition 1.2]{gz} does not exist on a symmetric curve with even genus and fixed-point free involution, a twisted orientation exists on every symmetric marked curve (with no real special points), cf. \S\ref{S.tw.exist}. 
\smallskip

For a smooth marked symmetric curve $\Si$ as in \eqref{marked.points}, its 
\textsf{relative tangent bundle} $\T_{\Si}$ is defined by
 \bear\label{T.punctured}
 \T_{\Si}= T\Si\otimes \O\big(-\ma{\textstyle\sum} _i x_i^+ -\ma{\textstyle\sum}_i x_i^-\big). 
 \eear
It is a holomorphic line bundle over $\Si$ and has a canonical real structure $c_\T$ induced by $c$, see also \eqref{T.family.C}.
\smallskip

As described after \cite[(A.13)]{GI} and reviewed in \S\ref{S.orient}, the moduli space $\ov  \M^{\R}_{d, \chi, \vec \mu}  (\Si, V)$ is orientable, and can be canonically oriented by a choice of twisted orientation data $\mo$ on the relative tangent bundle  $\T_\Si$. This gives rise to a virtual fundamental class 
denoted $[\ov \M_{d, \chi, \vec \mu}^{\R}(\Si,V)]^{\vir, \mo}$, which depends on $\mo$. 
 For any holomorphic line bundle  $L \ra \Si$, we can consider its index bundle $\mathrm{Ind}  \; \del_{L}$ as an element in 
 $K$-theory, see \eqref{def.ind}.  Denote
\bear\label{RGW.chern}
RGW^{c,\mo}_{d, \chi} (\Si, L)_{\vec \mu}\; \ma=^{\mathrm{def}}  \frac 1{|\Aut(\vec \mu)|}\ma \int_{[\ov \M_{d, \chi, \vec \mu}^{\R}(\Si,V)]^{\vir,\mo}}  \hskip-.3in  
c_{b/2}(-\mathrm{Ind} \; \del_{L}). 
\eear
Here $b$ is the (virtual) dimension \eqref{dim.M=b} of the moduli space and $c_k(E)$ denotes the $k$'th Chern class of $E$. 
The \textsf{local RGW invariant} defined by \cite[(2.24)]{GI} is then equal to 
  \bear\label{RGW.real.rel}
 RGW^{c,\mo}_{d} (\Si,L)_{\vec \mu} &=&  \sum_{\chi}RGW^{c,\mo}_{d, \chi} (\Si,L)_{\vec \mu} \; 
 t^{-\chi/2}  (u/t)^{b/2+dk}
 \eear
cf. Remark~\ref{R.moduli.ordered}. Here $k=c_1(L)[\Si]$ is the degree of the line bundle. 
\medskip

While there may be different ways of defining the VFC, in this paper we describe a specific 
construction  of the VFC 
 \bear\label{VFC.M}
 [\ov \M_{d, \chi, \vec \mu}^{\R}(\Si, V)]^{\vir,\mo}\in \cHH_b(\ov \M_{d, \chi, \vec \mu}^{\R}(\Si, V);  \Q)
 \eear
as an element of rational Cech homology, using the thin compactification method as introduced in \cite[\S2]{ip-thin} and briefly reviewed in \S\ref{S.thin.cpt}. This has the advantage that it is very concrete and does not use sophisticated virtual techniques. In the case when the target is a Riemann surface, turning on certain geometric perturbations 
$\nu$ of the $J$-holomorphic map equation as introduced by Ruan-Tian \cite{RT} suffices to obtain transversality strata-wise for the real relative moduli space (after passing to a cover of the Deligne-Mumford moduli space; see Remark \ref{R.RT.cover}). This ensures that  the moduli space is generically a thin compactification, and thus carries a VFC for all Ruan-Tian perturbations, including for $\nu=0$. We present the details of this construction in \S\ref{S.constr.vfc} and \S\ref{S.VFC.mod}.

\subsection{Thin compactifications}\label{S.thin.cpt} Here we briefly summarize the method of thin compactifications as introduced in \cite{ip-thin} and how it applies to construct the virtual fundamental class (VFC) of the real relative moduli spaces considered in this paper; the details of this construction are presented in \S\ref{S.VFC.mod}. 
\smallskip

Throughout this paper, by a $d$-dimensional manifold we mean a Hausdorff space $M$ locally modeled on $\R^d$. If $M$ is also oriented (but not necessarily compact), it has a  \textsf{fundamental class} in Steenrod homology and we denote by $[M]\in \cHH_d(M; \Q)$ its image in rational Cech homology. There are some subtleties involved when working with these homology theories; in particular, to pushforward such classes one needs a {\em proper} continuous map (between locally compact Hausdorff spaces), cf. \cite[\S1]{ip-thin}. However, rational Cech homology satisfies a continuity axiom, a relative homeomorphism axiom and is also exact; in particular, there is a natural long exact sequence
\bear\label{LES.Cech}
\dots \ra \cHH_{*}(A;\Q) \ra \cHH_*(X;  \Q) \ma\ra^\rho \cHH_*(X\setminus A; \Q)\ra \dots 
\eear
associated to a closed pair $(X, A)$, where $\rho$ is the "restriction" to an open set, cf. \cite{milnor} and \cite[\S1]{ip-thin}.

\medskip 

If $M$ is an oriented $d$-dimensional manifold, a \textsf{thin compactification} of $M$ (in the sense of  \cite[\S2]{ip-thin}) is a compact Hausdorff space $\ov M$ containing $M$ such that $S=\ov M\setminus M$ is closed and (homologically) codimension 2, i.e. $ \cHH_*(S; \Q)=0$ for all $*>d-2$. Then $\rho: \cHH_*(\ov M;  \Q)\ra \cHH_*(M; \Q)$ is an isomorphism for $*=d$, cf.  \eqref{LES.Cech}, thus the fundamental class $[M]\in \cHH_*(M;  \Q)$ uniquely lifts to a class on $\ov M$ denoted $[\ov M]\in \cHH_*(\ov M;  \Q)$. Thinly compactified cobordisms are defined similarly, cf. \cite[\S2.3]{ip-thin}.

\medskip

Roughly speaking, the thin compactifications method  of \cite{ip-thin} applies to a moduli problem whenever 
(i) for generic perturbation the moduli space is a thinly compactified manifold, i.e. is an oriented manifold away from 
(homologically) codimension 2 strata 
 and (ii) the moduli space over a generic 1-parameter family of perturbations is a thinly compactified cobordism, i.e. is 
 an oriented cobordism away from (homologically) codimension 2 strata. Since the fundamental class (in rational Cech homology) of a manifold uniquely extends to any thin compactification, the first condition defines the fundamental class of the moduli space for generic parameter. For non-generic parameter, the virtual fundamental class is obtained as the limit (in rational Cech homology) of nearby classes, which is well defined by condition (ii). For more details, see  \cite[Theorem~4.2]{ip-thin}. 
\smallskip
 
The ideal situation in which the thin compactification method applies is when all the strata of the moduli space are cut transversally, and the boundary strata have (virtual) codimension at least 2, cf. \cite[Lemmas~2.2 and 5.3]{ip-thin}.  More generally, it suffices to cover the boundary by images of codimension 2 manifolds, as in  \cite[Lemma~2.2]{ip-short}.

\medskip

Because the targets considered in this paper are holomorphic curves, we are in a very special situation where geometric Ruan-Tian perturbations work particularly well, and are sufficient to obtain transversality strata-wise on all strata of the real relative moduli spaces, with one exception. When the target is a sphere without any marked points, the (absolute) moduli space of maps from a genus 1 domain with no marked points has a few strata which are not cut transversally, cf. Remark~\ref{R.unstable.tori}. In all the other cases, the RT perturbations suffice to ensure that all the strata are cut transversally,  as we outline below.  For this reason, we work under the mild technical assumption that the target $\Si$ has a marked point on each spherical component, which suffices for our considerations, cf. Remark~\ref{R.VFC.tori}; for precise details, see Lemma~\ref{L.transv.rel}. 
\smallskip

Classically, it is known that one can use Ruan-Tian perturbations $\nu$ to get transversality on all stable components of the domain (after first passing to a cover of the Deligne-Mumford moduli space as in Remark~\ref{R.RT.cover}). However, Ruan-Tian perturbations identically vanish on unstable domain components, so these components must be handled by other methods. When the domain is a genus $g$ curve with at least $3-2g$ marked points, these are the spherical components collapsed to points under the map to the Deligne-Mumford moduli space. Otherwise, the entire domain is unstable (when it has genus 1 and no marked points or genus 0 with fewer than 3 marked points). 

However, when the target is aspherical (e.g. a smooth higher genus curve) unstable spherical domain components cannot occur for maps in the moduli space. When the target is a sphere, there could be unstable spherical domain components, but these are well understood classically, and in particular are cut transversally (even though they are multiple covers!). This is also true for all the strata of the moduli space of (holomorphic) maps from a genus 1 curve without marked points, as long as the target has genus one (or higher). 

\smallskip
For the relative moduli spaces, there are also rubber components involved, but in our case these project to constant maps   into the divisor which satisfy an additional condition that depends on $\nabla \nu$, cf. \eqref{ker.nor.not.0}. This condition is automatically satisfied on genus 0 components and is again cut transversally on the other rubber components because we can use $\nabla \nu$  to get transversality. Moreover, any stratum with rubber components has codimension at least 2. For more details, see \S\ref{S.VFC.rel}. 

\smallskip
The moduli spaces of real maps considered in this paper may have codimension 1 strata. In this case, we first argue that for generic parameter $\nu$ the union of the codimension at most one strata is an oriented topological manifold (without boundary), and thus carries a fundamental class (in rational Cech homology); this involves only standard gluing techniques at an ordinary real node of the domain (and no rubber components). All the other boundary strata have codimension at least two, thus the method of thin compactifications applies. For details, see \S\ref{S.VFC.mod}.

 %%%%%%%
%%%%
%%%%%%%%
\medskip
\section{Family moduli spaces} \label{S.fam.sect}
\medskip

A key step in proving the splitting formula for the RGW invariants is to consider a family of 
moduli spaces associated to a family  of symmetric marked curves degenerating to a nodal symmetric curve (with a conjugate pair of nodes). This family moduli space, denoted  $\ov\M(\F_{/\Delta})$, serves as the ambient space where we can compare the VFCs and integrands used in defining the RGW invariants. In this and the following two sections we set up the necessary notation, review the constructions and show that  the moduli spaces involved in defining the RGW invariants \eqref{RGW.real.rel} extend over families of symmetric marked curves, including across the singular fibers.
 
\subsection{Families of symmetric curves} \label{S.fam.curves} Recall that if $\Si_0$ is a complex nodal marked curve, then we can consider
\begin{enumerate}[(a)] 
	\item a smooth normalization $\wt \Si$ of $\Si_0$ that replaces each node by a pair of marked points; 
	\item a (flat) family $\F$ of deformations $\Si_s$ of $\Si_0$ smoothing out the nodes. 
\end{enumerate} 

\medskip
Let $(\Si_0, c_0)$ be a nodal symmetric curve with $r$ pairs of conjugate marked points $V_0$ and a single pair of conjugate nodes $x^\pm$. Let 
\bear\label{family.smoothings} 
\pi: (\mathcal{F}, c_\mathcal{F})\lra (\Delta^2, c_\Delta), 
\eear
denote a (flat) family of deformations of $(\Si_0, c_0)$ smoothing the nodes as defined in \cite[\S4.2]{GZ2}, cf.  \S\ref{S.A.smoothings}. Here $\De\subset \cx$ is the unit disk and $\De^2=\De\ti \De$.  The total space $\mathcal{F}$ is a smooth K\"ahler manifold with complex structure $J$ and the projection is holomorphic. Moreover, $c_\F$ is a real structure on $\F$ which is anti-holomorphic and restricts to the real structure $c_0$ on the central fiber $\Si_0$, while $c_\Delta: \Delta^2 \ra \Delta^2$ is defined by $(s_1, s_2)\mapsto (\bar s_2, \bar s_1)$.  Finally, marked points give rise to sections of $\F\ra \De^2$; their images define a smooth divisor $V\subset \cal F$.

\smallskip

The fiber of \eqref{family.smoothings} over $(s, \ov s)$ is a symmetric marked curve denoted $\Si_s$; its real structure $c_s$ is the restriction of $c_{\mathcal{F}}$ to $\Si_s$ while its marked points correspond to the restriction $V_s$ of $V$ to $\Si_s$. This determines a family 
\bear\label{fam.smooth.real} 
\F_{/\De}\; \ma=^{\mathrm{def}}\; \ma \cup_{s\in \De} \Si_s
\eear
of symmetric marked curves over $\De$ (the pullback of \eqref{family.smoothings} via  the map $\De \ra \De^2$, $s\mapsto (s, \ov s)$ for all $s\in \De$). We will later consider restrictions of  this family \eqref{fam.smooth.real}  to a line, path, etc.  

\medskip
Consider also 
\bear\label{nor.Si.0} 
(\wt \Si, \wt c)\ra (\Si_0, c_0)
\eear 
the normalization of the singular fiber $\Si_0$ (as a marked symmetric curve). Here $\wt c$ denotes the real structure of $\wt \Si$ and we denote by $\wt V$ the collection of marked points of $\wt \Si$ consisting of one point over each marked point of $\Si_0$, and a pair of points over each node of $\Si_0$. The curves $\Si_s$ and  $\wt \Si$ come with natural maps  
\bear\label{si.to.F}
\iota_s:(\Si_s, c_s)\hookrightarrow (\F, c_\F) \quad \mbox{ and }\quad \phi: (\wt \Si, \wt c) \ra (\F, c_\F). 
\eear
into the total space $\F$ of the family; the second map factors through the nodal fiber.  

Finally, let 
\bear\label{rel.T.fam}
(\T, c_\T)\ra (\F, c_\F)
\eear
denote the relative tangent bundle to the family \eqref{family.smoothings}; here $\cal T$ is a holomorphic line bundle (locally free sheaf) over $\F$  which comes with an induced real structure $c_\T$, cf. \cite[Lemma 4.8]{GZ2} and \eqref{T.family.C}. The pullback of  $(\T, c_\T)$ to both $\Si_s$ and $\wt \Si$ under \eqref{si.to.F} gives their corresponding relative tangent bundle \eqref{T.punctured}. The relative tangent bundle $\T_{\Si_0}$ to the nodal curve $\Si_0$ is defined as the restriction of $\T$ to $\Si_0$, and it fits in the normalization short exact sequence of holomorphic sheaves
\bear\label{T.SES}
0\ra \T_{\Si_0} \ra \T_{\wt \Si} \ra \T_{|nodes} \ra 0
\eear
compatible with the real structures.

%%%%%%%%%%%%%5
%%%%%%%%%%%%%%5
%%%%%%%%%%%%

\subsection{Family moduli space}\label{S.fam.moduli} The real relative moduli spaces of maps into the smooth fibers $\Si_s$ naturally form a family which can be compactified by adding a fiber over $s=0$.   In parallel with the proof of the usual splitting formula e.g \cite[(14)]{bp-TQFT}, we consider the family 
\best%\label{fam.moduli}
\ov{\M}_{d, \chi, \vec \mu}(\F_{/\De})=\ma\cup_{s\in \De} \ov{\M}^\R_{d, \chi, \vec \mu}(\Si_s,V_s)\longrightarrow \De
\eest 
of moduli spaces whose fiber at  $s\in \De\setminus 0$ is  the real relative moduli space $\ov\M^\R(\Si_s, V_s)$, while the fiber 
$\ov{\M}^\R(\Si_0, V_0)$ over $s=0$ includes maps with rubber components over both the nodes and the marked points of $\Si_0$. We describe these spaces in more detail below.

\subsection{Relative real moduli spaces} \label{S.rel.moduli} 
Even in the complex category, there are several versions of the relative moduli space of holomorphic maps to a complex curve $\Si$ relative to a divisor $V$. The version used by Bryan-Pandharipande in \cite[Definition~3.1]{bp1} is more convenient for computational purposes, and is a finite quotient of the standard one defined by Jun Li in \cite{li}. The latter has the property that all the contact points are ordered, and is  more convenient for analytical considerations, including for constructing the VFC and describing its behavior under target degenerations. Of course, the virtual fundamental classes of these two versions of the moduli space are essentially the same up to a combinatorial factor. 
\smallskip
 
In this section we outline the construction of the relative moduli space, in which all the contact points are ordered, adapted to the real setting; see also Remark~\ref{R.moduli.ordered}. We include some of the standard arguments for ease of reference when we extend these arguments to families of degenerating targets.  

\medskip
Let $(\Si, c)$ be a (smooth) symmetric marked curve with $r$ pairs of conjugate marked 
points $V$ as in \eqref{marked.points}. Fix $d$, $\chi$ and a collection $\vec \mu=(\mu^1, \dots, \mu^r)$ of $r$ partitions of $d$. Consider first the "top stratum" 
\bear\label{top.stratum.la}
\M^\R_{d, \chi, \vec \mu} (\Si, V)
\eear
  consisting of equivalence classes (up to reparametrizations of the domain) of $J$-holomorphic real maps $f:(C,\si)\ra (\Si, c)$ such that 
\begin{enumerate}[(i)]
	\item the domain $C$ is a smooth, marked, possibly disconnected symmetric Riemann surface of Euler characteristic $\chi$;
\item $f$ has ramification pattern $\mu^i$ over $x_i^+$ (and thus also over its conjugate $x_i^-$), for all $i=1, \dots, r$;
\item $f$ is nontrivial on each connected component of $C$;
\item $f$ is a degree $d$ map, i.e. the image of $f$ represents $d[\Si]$ in $H_2(\Si; \Z)$;
\end{enumerate}
The points in $f^{-1}(V)$ are called \textsf{contact points} of $f$ to $V$, and here all the contact points are {\em marked} points of the domain (and the domain has no other marked points); see also Remark~\ref{R.moduli.ordered} below.
Specifically, condition (ii) means that
\begin{itemize} 
\item $f^{-1}(x_i^\pm)=\{ y_{ij}^\pm\}_{j=1, \dots, \ell(\mu^i)}$ for every  $i=1, \dots, r$; in particular, $f(y_{ij}^+)=x_i^+$.
\item the ramification order of $f$ at $y_{ij}^\pm$ is $\mu_j^i$ and  $y_{ij}^\pm$ are conjugate points; 
\item $\{ y_{ij}^\pm \;|\; j=1, \dots, \ell(\mu^i), \; i=1, \dots, r \}$ are marked points of the domain.
\end{itemize}
A   map $f$  which satisfies these conditions is said to have its contact to $V$ prescribed by $\vec\mu$. 

The moduli space \eqref{top.stratum.la} has a compactification $\ov \M^\R_{d, \chi, \vec \mu}(\Si, V)$ in which both the domain and target of the maps is allowed to degenerate. We start by briefly describing the targets, denoted $\Si[m]$, and called \textsf{buildings obtained by rescaling $\Si$ around to $V$}; for the general rescaling construction 
normal to a divisor see for example \cite[\S4]{i-nc}.
In our case $\Si[m]$ is a nodal symmetric marked curve, obtained from $\Si$ by attaching chains of spheres at its marked points in the following manner. Let $N_V$ be the normal bundle of $V$ in $\Si$, and consider the projectivization $\P_V=\P(N_V\oplus \cx)$. Then 
\bear\label{def.PV}
\P_V=\P^1\ti V= \ma\sqcup_{i=1}^r \P^1\ti \{x_i^\pm\}=  \ma\sqcup_{i=1}^r \P_{x_i^\pm}
\eear
is a $\P^1$ bundle over $V=\{ x_1^\pm, \dots, x_r^\pm\}$ with
\begin{enumerate}  [(i)] 
	\item a zero and infinity section $V_0$ and $V_\infty$,
	\item a real structure induced by the one on $N_V$, thus covering $c(x_i^+)=x_i^-$ in the base, and 
	\item a $\cx^*$ action on each $\P_{x_i^\pm}$, fixing (i) pointwise and compatible with (ii) i.e. satisfying $c(\la\cdot z)= \ov \la \cdot c(z)$. 
\end{enumerate} 
Assume first for simplicity that $V$ consists of a single pair of marked points, i.e. $r=1$. Starting with $\Si$ and inductively rescaling it $m$ times around $V$ gives rise to the building 
\best%\label{build.Si}
\Si[m]= \Si \ma\cup_{V=V_\infty} \P_V \ma\cup _{V_0=V_\infty}  \dots \ma\cup _{V_0=V_\infty}  \P_V \longrightarrow \Si, 
\eest
with a divisor $V[m]\subset \Si[m]$ corresponding to the zero divisor of the last copy of $\P_V$. 
For the general case, we allow $\Si$ to be {\em independently} rescaled $m_i$ times around each pair $\{x_i^\pm\}$ of conjugate points in $V=\{x_1^\pm, \dots, x_r^\pm\}$. This similarly gives rise to a building denoted $\Si[m]$ with a divisor 
$V[m]=\{x^\pm_i\}_{i=1}^r$.  The $\cx^*$ action on each $ \P_{x_i^\pm}$ induces a $(\cx^*)^{|m|}$ action on the building $\Si[m]$, denoted 
$$
t \mapsto R_t\in \Aut (\Si[m])
$$
and called the  \textsf{rescaling action}; here $m=(m_1, \dots, m_r)$ and $|m|=m_1+\dots+m_r$. Finally, let 
\bear\label{colapse.map}
p: \Si[m] \longrightarrow \Si
\eear 
denote the projection induced by collapsing all the $m_i$ copies of $\P_{x_i^\pm}$ for  $i=1, \dots, r$ down to $V$. 
\medskip

Then the compactification $\ov\M^\R_{d, \chi, \vec \mu} (\Si, V)$ of the top stratum \eqref{top.stratum.la} is defined as follows. 
\begin{defn}\label{D.moduli.rel.or} 
An element of the moduli space $\ov\M^\R_{d, \chi, \vec \mu} (\Si, V)$ is an equivalence class, up to reparametrizations of the domain and rescaling the target, of real $J$-holomorphic maps 
\bear\label{f.to.Sm}
f:C\ra \Si[m]
\eear
from some  possibly nodal, possibly disconnected symmetric Riemann surface $C$ to some symmetric building $\Si[m]$ such that 
\begin{enumerate}[(i)]
\item $f$ has prescribed contact to the marked points of the target, i.e. the preimage of $\{x^\pm_i\}$ consists only of marked points of the domain, denoted $\{ y^\pm_{ij}\}$, and $f$ has order of contact $\mu_j^i$ at $y^\pm_{ij}$ to $x_i^\pm$; in particular, $f(y^+_{ij})= x_i^+$ and $y^\pm_{ij}$ are conjugate marked points. 
\item $f$ satisfies the following matching condition: the preimage of the nodes of the target  consists only of nodes of the domain with the same order of contact on the two local branches. 
\item the restriction of $f$ to every connected component of the domain is nontrivial. 
\item $f$ has degree $d$ and its domain has (virtual) Euler characteristic $\chi$.
\item $f$ is \textsf{relatively stable}, i.e. $\Aut f$ is finite; here an automorphism of $f:C\ra \Si[m]$ is a pair 
$(\varphi, t)\in \Aut(C) \ti (\cx^*)^{|m|}$ such that $R_t \circ f \circ \varphi=f$. 
\end{enumerate} 
\end{defn} 
The real relative moduli space 
\bear\label{rel.order.all}
 \ov\M^\R_{d, \chi, \vec \mu} (\Si, V)
\eear
comes with natural maps induced by forgetting some of the data such as the real structures, the order of the contact points, the divisor $V$, etc. Forgetting the real structure defines a map to the (usual) relative moduli space of holomorphic maps to $\Si$ relative the divisor $V$. 
\smallskip

\begin{rem}\label{R.moduli.ordered}%\label{R.mod.order.cover} 
 Unlike in \cite[Definition~2.5]{GI} (or \cite[Definition~3.1]{bp1}), throughout this paper we are using the standard definition of the relative moduli space (cf. \cite{li} or \cite{ip-rel}) in which all the contact points are marked. The moduli space \eqref{rel.order.all} comes with a group action permuting the contact points with same image and multiplicity; the quotient by this $\Aut (\vec \mu)$ action is the moduli space in \cite[Definition 2.5]{GI}. 
 In particular, the former moduli space is the degree $|\Aut(\vec\mu)|=\prod_i |\Aut(\mu^i)|$ cover of the latter moduli space, obtained by ordering the contact points.
 \end{rem}

\subsection{Rubber components and stratification}\label{S.rubber} There is an equivalent description of the elements of the relative moduli space that is more convenient for transversality purposes. Decompose any real map  \eqref{f.to.Sm} satisfying conditions (i)-(v) of Definition \ref{D.moduli.rel.or} into components, obtained by restricting $f$ to the irreducible components of its domain.  
These components can be grouped according to their image in $\Si[m]$; those that are mapped to $\Si$ are called \textsf{level zero components} and the rest are called \textsf{rubber components}. In turn, the rubber components can be grouped according to which copy of $\P_{x_i^\pm}$ in $\Si[m]$ their image lands in, and the vertices of the dual graph of $f:C\ra \Si[m]$ are decorated by this discrete data (along with the genus and the homology class represented by each irreducible component). 

By Definition~\ref{D.moduli.rel.or}(ii), each node $q$ of the domain has an associated order of contact $\la(q)$, where $\la(q)=0$ if $q$ is an ordinary node i.e. mapped away from the nodes of $\Si[m]$. Each marked point of the domain also comes with an associated order of contact cf. Definition~\ref{D.moduli.rel.or}(i). Altogether, this associates to every special point $q$ of the domain a 
multiplicity $\la(q)$, and the dual graph of $f$ is decorated by this data as well. 

\begin{rem}\label{R.rubber.comp.pts} Note that every rubber component $f_k$ of $f$, regarded as a map $f_k:C_k \ra \P_V$ either represents 0 in 
homology (and its image is disjoint from the  $0$ and $\infty$ divisors) or else its domain has at least two special points (the inverse image of $0$ and $\infty$), cf. Definition~\ref{D.moduli.rel.or}(i)-(ii).
\end{rem}

For each decorated dual graph $\tau$, let $\M_\tau$ denote the corresponding stratum of the real relative moduli space. Elements of this stratum can also be described 
in terms of their projection under the collapsing map \eqref{colapse.map}, and the lifts of this projection to meromorphic sections of a line bundle as follows.

 Since $V$ is 0 dimensional, each rubber component projects to a constant map to $V$ under \eqref{colapse.map}, and is a holomorphic map to one of the $\P_V$'s, with prescribed contact to 0 and $\infty$ (determined by $\tau$), cf. Definition \ref{D.moduli.rel.or}(i)-(ii). Equivalently, it is a meromorphic section $\xi\ne 0$ of the pullback normal bundle of $V$ with zeros and poles at the special points of prescribed order (and no other zeros/poles). Note that any two such sections, if they exist, must be constant multiples of each other. For more details,  see \S\ref{S.normal.compl}. 
\smallskip

Some rubber components may be multiple covers $\P^1\ra \P^1$ totally ramified over 0 and $\infty$; these are called \textsf{trivial components}, and their domain is a sphere with precisely two special points. All the other (nontrivial) rubber components project to a stable map to $V$ cf. Remark \ref{R.rubber.comp.pts}. 

\medskip

In particular, the projection \eqref{colapse.map} induces a forgetful map 
\bear \label{forget.V} 
\ov\M^\R(\Si, V) \ra \ov \M^\R(\Si, \emptyset)
\eear
to the (absolute) real moduli space; it takes $f: C\ra \Si[m]$ to the map obtained from 
$p\circ f:C\ra \Si$ after collapsing all the chains of trivial components in the domain to obtain a stable map to $\Si$. 
% Recall that a map $f:C \ra X$ is stable means that $\{\varphi \in \Aut(C) \; |\; f\circ \varphi=f\}$ is finite, or equivalently every irreducible component $C_k$ of the domain which represents 0 in homology is a stable curve (i.e. has at least $3-2g(C_k)$ special points, where $g(C_k)$ is the genus of $C_k$.

\begin{rem} \label{R.stable.vs.rel} (Stability vs relative stability) Let $f: C\ra \Si[m]$ be a real map which satisfies conditions (i)-(iii) of Definition~\ref{D.moduli.rel.or}. 
Then $f$ is relatively stable if and only if (a) $f$ is stable as a map to $\Si[m]$ and (b) every copy of $\P_{x_i^\pm}$ in $\Si[m]$ contains at least one nontrivial rubber component of $f$. 
\end{rem} 

\begin{rem}\label{R.im.rel}  By definition, the image of the forgetful map \eqref{forget.V} consists of stable, real holomorphic maps $f:C\ra \Si$ that have a holomorphic lift $\wh f:\wh C\ra \Si[m]$ which satisfies the matching conditions over the singular locus of $\Si[m]$ and has prescribed contact with $V[m]$, cf. Definition \ref{D.moduli.rel.or}.  In particular, the components of $f$ that are mapped to $V$ must have a lift to a rubber component. 
\end{rem}

\subsection{Nodal targets}\label{S.nod.target} Definition~\ref{D.moduli.rel.or} extends verbatim to nodal targets, as long as 
we  include maps with rubber components over both the marked points and the nodes of the target. Assume $\Si_0$ is a nodal symmetric  marked curve with one ordered pair $\{x^\pm\}$ of conjugate nodes, $r$ pairs of conjugate marked points $V_0$, 
and no other special points. Then the real relative moduli space 
\best%\label{def.nodal.M}
\ov\M^\R_{d, \chi, \vec \mu}(\Si_0, V_0)
\eest
is defined as in Definition~\ref{D.moduli.rel.or} except that $\Si[m]$ is replaced by a building $\Si_0[m]$ obtained by rescaling $\Si_0$ at both the marked points and the nodes; in particular, chains of spheres are also inserted at the nodes of $\Si_0$, in addition to those inserted at the marked points. 
Therefore an element of this moduli space is an equivalence class of real $J$-holomorphic maps 
\best%\label{f.to.si.0}
f:C_0 \ra \Si_0[m]
\eest
satisfying conditions (i)-(v) of Definition~\ref{D.moduli.rel.or}. Now the domain $C_0$ must be nodal and the top stratum $\M^\R(\Si_0, V_0)$ is the subset consisting of maps to $\Si_0$ whose domain has no other nodes besides those in the inverse image of the nodes 
of $\Si_0$. 

Recall that \eqref{nor.Si.0} attaches pairs of marked points of $\wt \Si$ to produce the nodes of $\Si_0$; it extends to a map between any building associated to $\wt \Si$ and the corresponding building associated to $\Si_0$. The attaching map 
\bear\label{map.attach.0}
\begin{tikzcd}
\ma\bigsqcup_{\la\vdash d} \ov\M^\R_{d, \chi+4\ell(\la), \vec \mu, \la, \la}(\wt \Si, \wt V) \ar[r, "\Phi"]& \ov\M^\R_{d, \chi, \vec \mu} (\Si_0, V_0) 
\end{tikzcd}
\eear
is then induced by attaching pairs of marked points of both the domain and target to produce nodes (then forgetting the order of these nodes). Note that in the domain of \eqref{map.attach.0} the contact points are {\em ordered}; however $\Phi$ factors through the quotient by the diagonal $\Aut(\la)$ action to produce {\em unordered} nodes in its image.
\subsection{Maps to the family} Consider the family 
 \bear\label{mod.fam.la}
\ov \M_{d, \chi, \vec\mu} (\F_{/\De})=\ma\cup_{s\in \De} \ov\M^\R_{d, \chi, \vec \mu} (\Si_s, V_s)\longrightarrow \De
\eear 
moduli spaces associated to the family \eqref{fam.smooth.real} of targets; its fiber over $s\in \De$ is the real relative moduli space associated to the fiber of $\F$ over $s$  defined above. In particular, the fiber $\ov{\M}^\R(\Si_0, V_0)$ over $s=0$ includes maps with rubber components over both the nodes and the marked points of $\Si_0$.

The inclusion of $\Si_s$ into $\F$ is holomorphic, and induces a proper map
\bear\label{map.include} 
\begin{tikzcd}
\ov\M^\R_{d, \chi,\vec\mu} (\Si_s, V_s) \ar[r, hook]& \ov\M_{d, \chi,\vec\mu}(\F_{/\De}) 
\end{tikzcd}
\eear
for every $s\in \De$. The map \eqref{nor.Si.0} is also holomorphic and induces the proper map \eqref{map.attach.0} at the level of moduli spaces; the composition of the latter with the map \eqref{map.include} for $s=0$ is a proper map 
\bear\label{map.attach}
\begin{tikzcd}
\ma\bigsqcup_{\la\vdash d} \ov\M^\R_{d, \chi+4\ell(\la), \vec \mu, \la, \la}(\wt \Si, \wt V) \ar[r]& \ov\M_{d, \chi, \vec \mu} (\F_{/\De}).  
\end{tikzcd}
\eear
\begin{rem} The topology of these real relative moduli spaces is a refinement of the usual Gromov topology, defined by a process of rescaling the target normal to the divisors cf. \cite{ip-rel} and \cite{ip-sum}; in particular, the topology on the subset \eqref{top.stratum.la} consisting of maps without rubber components is the usual Gromov topology. 
\end{rem}
%%%%%%%%%%%%%%%
%%%%%%%%%%%%%%%
%%%%%%%%%%%%%%%%
%\medskip
\section{Perturbations and transversality}\label{S.constr.vfc}
\medskip

In this section we introduce spaces of Ruan-Tian perturbations adapted to our setting and show that the strata of the real relative moduli spaces described above are cut transversally over such parameter spaces.

\subsection{Ruan-Tian perturbations} \label{S.RT.perturb} To obtain transversality stata-wise, we fix the (integrable) complex structure $J$ on the target, but turn on Ruan-Tian perturbations $\nu$ adapted to the situation. In particular, for the real relative moduli space $\ov\M^\R(\Si, V)$, we restrict to the space $\JV^\R(\Si, V)$ of RT-perturbations  on $\Si$ compatible with both the real structure and the divisor $V$, as described in this section. 

In general, recall that if $J$ is an (almost) complex structure on $X$ and $\nu$ is a Ruan-Tian perturbation \cite{RT}, 
then a  $(J, \nu)$-holomorphic map to $X$ is a solution $f:C\ra X$ of the equation
\bear\label{j.nu.holo.eq}
\del_J f =\nu|_{f}  
\eear
or equivalently the graph $F$ of $f$ is $J_\nu$-holomorphic, as reviewed for example in \cite[\S3.1]{ip-don}. If the domain $C$ has trivial automorphism group, one can use the variation in $\nu$ to show that the linearization is cut transversally, essentially because the graph of $f$ is an embedding. This extends to the case the domain $C$ is stable after passing to a  regular cover of the Deligne-Mumford moduli space as in Remark~\ref{R.RT.cover} below. 

\begin{rem}\label{R.RT.cover} In general, for higher genus domains, passing to covers of the moduli spaces is needed to kill the automorphism groups and turn on Ruan-Tian perturbations, as reviewed for example in \cite[\S3.1]{ip-don}. This is classically achieved by working on a regular cover of the Deligne-Mumford moduli space, defined by considering curves with extra decorations such as level structures or twisted bundles eg. as constructed by Abramovich-Corti-Vistoli in \cite{acv}, cf. \cite[Chapter XVI, Theorem 7.1]{acgh}; see also Chapter XVI, \S10 of \cite{acgh}. Such regular cover comes with an universal curve $\U$ whose fiber at a (decorated stable) curve $C$ is $C$. 
\end{rem}
\smallskip

When $(X, c)$ is a manifold with a real structure $c$, denote by $\JV^\R(X)$ the space of real Ruan-Tian perturbations on $X$, as defined for example in \cite[\S2 and \S3.1]{Z}. These are constructed as follows. Using the forgetful map 
from the real Deligne-Mumford moduli space $\ov\M_{\chi, \ell}^\R$ to the complex one, and a regular cover of the latter, one constructs a cover of $\ov\M_{\chi, \ell}^\R$ and a universal curve $(\U, c_\U)$. The fiber of $(\U, c_\U)$ at a (decorated stable) symmetric curve $(C, \si)$ is $(C, \si)$.

A real RT perturbation on $X$ is an element $\nu \in \Hom^{01}(\T_{\U}, TX)$ defined on 
$\U \ti X$, such that
\bear \label{nu.real}
\mbox{ $\nu$ is real and is supported away from the special points of the domain. }
\eear
Here  $\T_{\U} \ra \U$ is the relative tangent bundle to the fibers of $\U$, cf. \eqref{T.family.C}, and $\Hom^{01}$ denotes the space of anti-complex linear homomorphisms i.e. such that $J\nu+\nu j =0$, where $J$ is the (almost) complex structure on $(X,c)$, and $j$ is the family of complex structures on the fibers of $\U$. Such a homomorphism $\nu$ can be regarded as section of the bundle 
$(\T_\U)^\vee \boxtimes_\cx TX$ which has an induced real structure; $\nu$ is called {\em real} if this section is invariant with respect to the real structures.  For details, see \cite[\S2 and \S3.1]{Z}. 
\medskip

When $X=\Si$ is a symmetric curve with conjugate marked points $V$, consider the subset 
 \bear\label{RT.relative} 
 \JV^\R(\Si, V)  \subseteq \JV^\R(\Si) 
 \eear
 of real RT perturbations $(J,\nu)$ which are compatible with $V$ in the sense of \cite[Definition~3.2]{ip-rel}. 
 Since here $J$ is integrable and $V$ is 0 dimensional, these conditions reduce to the requirement that $\nu$ vanishes along $\U\ti V$ and $\nabla \nu$ is complex linear along $\U\ti V$. Specifically, the conditions are 
 \bear\label{nu.rel}
 \nu|_{\U \ti V} =0 \quad \mbox { and } \quad (\nabla_{J w} \nu)(v)= J(\nabla_{w} \nu)(v)  \mbox{ for all }
 w\in TX|_V  \mbox { and } v\in \T_\U. 
 \eear
 Note that the last condition is equivalent to 
 \bear\label{nu.rel.2}
  \nabla_w \nu= \al_V \otimes_\cx w \quad \mbox{ for all $w\in N_V$}
  \eear 
 where $\al_V \in \Hom^{01}(\T_\U; \cx)$ is defined on $\U \ti V$. 
 
Thus, as in \cite[\S3]{ip-rel}, the compatibility condition with the divisor $V$ ensures that 
\begin{enumerate}[(a)]
\item $\nu$ restricts to a RT perturbation on $V$ (i.e. $\nu$ takes values in $TV$ along $V$);
\item for maps with image in $V$, the normal operator $L^N=\del -\nabla \nu$ is complex linear; 
\item the 1-jet of $\nu$ determines a RT perturbation on $\P_V$ and on (the normalization of) 
the building $X[m]$ obtained by rescaling $X$ normal to $V$.
\end{enumerate} 
Property (c) follows using the fact that 
\bear\label{PV.swap} 
\P_V=\P(N_V\oplus \cx)=\P(\cx \oplus (N_V)^\vee)
\eear
has a $\cx^*$ action and an involution swapping the zero and infinity divisors (induced by $z\mapsto z^{-1}$). Let $\zeta$ denote the canonical $\cx^*$-equivariant vector field  on $\P_V$. This vanishes along the zero and infinity divisors $V_0\sqcup V_\infty$, and its restriction to $N_V\subset \P_V$ under the inclusion $w\mapsto [w, 1]$ is given by $\zeta_w=(w; w)\in  T_w N_V$ for all 
$w\in N_V$. 
Then the restriction \eqref{nu.rel.2} of $\nabla \nu$ induces a RT perturbation on $N_V\subset \P_V$, defined as the tensor product of the pullbacks of $\al_V$ and $\zeta$. This tensor product $\al_V\boxtimes \zeta$  can be regarded as a $\cx^*$-equivariant RT perturbation on $\P_V$, compatible with its zero and infinity divisors as in \eqref{nu.rel}. Thus $\nu$ induces a RT perturbation on $X[m]$, whose restriction to $X\subset X[m]$ is equal to $\nu$, and the restriction to each copy $\P_V$ of $X[m]$ is equal to $\al_V\boxtimes \zeta$. 

\begin{rem}\label{R.defn.J.n} 
Note that a map $f:C\ra \Si[m]$ to the building is $(J,\nu)$-holomorphic (with respect to this lift of $\nu$ to $\Si[m]$) if and only if
\begin{enumerate}[(i)] 
\item the projection $\rho:=p(f): C \ra \Si$ is $(J, \nu)$-holomorphic
\item each rubber part $f_r:=f|_{C_r}:C_r\ra \P_V$ of $f$, regarded as a section $\xi$ of the pullback normal bundle $\rho_r^*N_V$, satisfies $\del \xi =\nabla_\xi \nu$ i.e. $L^N\xi=0$; for details, see \S\ref{S.normal.compl}. 
\end{enumerate}
In fact, since $L^N$ is complex linear, then $\xi$ is a meromorphic section; thus up to scale is determined by its zero and infinity divisor. Note that if $\xi \ne 0$ then $\zeta=\xi^{-1}$ is a section of the dual bundle $\rho_r^*(N_V)^\vee$ which solves $\del \zeta =-\nabla_\zeta \nu$; the poles of $\zeta$ correspond to the zeros of $\xi$ and viceversa. 

This perspective neatly encodes the conditions describing the relative stable map compactitication constructed in \cite{ip-rel}, cf. \cite[Remark~7.7 and Definition~7.2]{ip-rel}. 
\end{rem}

The space of RT perturbations \eqref{RT.relative} easily extends to the family of targets. As before, we start with the space of 
real Ruan-Tian perturbations $\nu$ on the total space $(\F,c_\F)$ of the family \eqref{family.smoothings}. These satisfy condition \eqref{nu.real} for $X$ equal to the total space of the family $\F$. Then we restrict to the subspace 
\bear\label{RT.rel.fam}
\JV^\R(\cal F_{/\De})
\eear
of such perturbations which additionally satisfy the following conditions:
\begin{enumerate}[(a)]
\item  $\nu$ is compatible with the fibration i.e. $\pi_*\nu=0$; 
\item $\nu$ is compatible with the divisor $V$ of $\F$ i.e. the 1-jet condition \eqref{nu.rel} holds for $X$ equal to the total space of the family $\F$ and $V\subset X$ the union of the marked points of the family.
\item $\nu$ is compatible with the nodal locus i.e. the pullback $\wt \nu=\phi^*\nu$ to the normalization $\wt\Si$ takes values in $T\wt \Si\subset \phi^*T\F$ and satisfies the 1-jet condition  \eqref{nu.rel} for $X$ equal to $\wt \Si$ and $V$ equal to the preimage of the nodes of $\Si_0$. 
\end{enumerate}

Condition (a) implies that any $(J, \nu)$ holomorphic map to $\F$ projects to a holomorphic, thus (locally) constant map to $\De$, and therefore its image is contained in a fiber of $\F$ (if the domain is connected). Moreover, conditions (a)-(b) imply that the pullback $\nu_s$ of $\nu$ to $\Si_s$ is a real RT perturbation on $\Si_s$, which is compatible with the divisor $V_s=V\cap \Si_s$. Similarly, the pullback $\wt \nu$ of $\nu$ to the normalization $\wt \Si$ is a real RT perturbation on $\wt \Si$ compatible with the divisor $\wt V$ (the inverse image of the special points of $\Si_0$). 
This means that the maps $\iota_s$ and $\phi$ in  \eqref{si.to.F} induce maps
\begin{align}
\label{nu.pullsback}
\JV^\R(\cal F_{/\De})& \ra \JV^\R(\Si_s, V_s)&  \JV^\R(\cal F_{/\De})& \ra \JV^\R(\wt \Si, \wt V)
\\
\nonumber
\nu&\mapsto \nu_s& \nu&\mapsto \wt \nu
\end{align}  
at the level of parameter spaces. 
\smallskip

Note that the family $\F$  has a specific local model  \eqref{loc.coord.C.sm.node} in a neighborhood of the nodes of $\Si_0$, and is a product away from a smaller neighborhood. Thus in \eqref{RT.rel.fam}, condition (a) implies (c) away from $\U\ti \{nodes\}$. A standard calculation shows that 
(a) and (c) imply that for every node $q$ of $\Si_0$, 
\bear\label{cond.node.new}
\nu|_{\U \ti q}=0 \quad \mbox{ and }  \quad \al_{q_1}+\al_{q_2}=0.
\eear
Here $q_i$ are the lifts of $q$ to $\wt \Si$, and $\al_{q_i}\in \Hom^{01}(\T_\U; \cx)$ over $\U\ti q$ is determined by $\wt \nu$ via \eqref{nu.rel.2}.  
\begin{rem}\label{R.nu.node} Property \eqref{cond.node.new} implies that the lift of $\nu$ to the building $\Si_0[m]$ is well defined, i.e. its value on $\P_q$ is independent of the choice of lift $q_i\in \wt \Si$ of the node $q$ of $\Si_0$. Note that for each node $q$ of $\Si_0$ we get two copies $\P_{q_1}$ and $\P_{q_2}$ in $\P_{\wt V}$ and there is an identification 
$\P_{q_1}\cong \P_{q_2}$ swapping the zero and infinity divisors,  induced by $(q_1,z)\mapsto (q_2, z^{-1})$. For a map $\rho:C \ra  \{ nodes\}\subset \Si_0$, each lift $\rho_i$ to $\wt \Si$ has an associated normal operator $L^N$. Property \eqref{cond.node.new} implies that the condition $\ker L^N\ne 0$ is well defined, independent of the choice of lift of $\rho$ to the normalization $\wt \Si$, cf. Remark \ref{R.defn.J.n}.
\end{rem}

\subsection{Transversality for each stratum}\label{S.VFC.rel}  Since our targets are curves, a generic real RT perturbation 
$\nu$ compatible with the divisor ensures that all strata are of expected dimension and therefore as in \cite{ip-thin}, the real relative moduli spaces carry a VFC for all parameters, including $\nu=0$. For completeness, we provide the details of this argument here. 
   \medskip
   
Assume first $\Si$ is a smooth symmetric curve with conjugate marked points $V$ as in \eqref{marked.points} and a fixed integrable complex structure $J$. As $\nu$ varies in the space of RT perturbations  $\JV^\R(\Si, V)$ defined in \eqref{RT.relative}, we get a family 
 \bear\label{rel.moduli.jv.all} 
 \begin{tikzcd}  
 	\ov \fM^\R_{d, \chi, \vec \mu} (\Si, V)\ar[r, "\pi"]&\JV^\R(\Si, V)
 \end{tikzcd}  
 \eear
of real relative moduli spaces, one for each choice of topological data $d$, $\chi$, $\vec \mu$  and we denote by 
$\ov \fM^\R_{d, \chi, \vec \mu} (\Si, V)_\nu$ its fiber at $\nu$; it can be regarded as a deformation of the moduli space $\ov \M^\R_{d, \chi, \vec \mu} (\Si, V)$ in  \eqref{rel.order.all}, which corresponds to $\nu=0$. Specifically, an element of the moduli space \eqref{rel.moduli.jv.all} is a pair $([f], \nu)$ where  
 \bear\label{f.to.Si}
 f:C\ra \Si[m]
 \eear 
 is a (relatively stable) real $(J, \nu)$-holomorphic map satisfying the properties {(i)-(v)} listed in Definition~\ref{D.moduli.rel.or}.

 As in \S\ref{S.rubber}, the moduli space \eqref{rel.moduli.jv.all} is stratified with strata $\M_\tau$ indexed by the decorated dual graph $\tau$ 
 of  the maps \eqref{f.to.Si}; therefore, along a stratum, the topological type of the domain and target is fixed, as is the ramification pattern of $f$ over the special points of the target. Note that the preimage of the nodes/marked points of $\Si[m]$ consists only of nodes/marked points of $C$. Denote by $\bf x$ the collection of special points of the target $\Si[m]$, and by ${\bf y}=f^{-1}(\bf x)$ its preimage. An element $y$ of $\bf y$ is called a contact point, and comes decorated by the contact multiplicity $\la(y)$ of $f$ at $y$, cf. \eqref{lead.coef.f}. The other points of the domain are called ordinary points; they are mapped away from the special points of the target, and their contact multiplicity is 0.  Note that some of the nodes could be ordinary ones, but all the marked points are contact points.
 
  \medskip
  
Fix a stratum $\M_\tau$ of the moduli space.  Restrict first to the case $m=0$, i.e. the target $\Si[m]=\Si$ is smooth; here the domain could be nodal,  but all its nodes are ordinary (mapped away from $V$). When $f:C \ra \Si$ has contact order $\la$ at the points ${\bf y}=f^{-1}(V)$, the linearization (to this moduli problem) is 
\begin{equation}
\label{lin.f.2} 
\begin{split}
&L_f:  \Gamma_{\la; \bf y} (f^* T\Si)^\R \oplus  T_C\ov\M^\R_{\chi, \ell} \ra \Gamma_{\la-1; \bf y}(\Omega^{01}_C\otimes_\cx f^*T\Si)^\R
\\ 
&L_f (\xi, h)= \del \xi- [ \nabla_\xi \nu + \tfrac 12 J df  h]^{01},
\end{split}
\end{equation}
cf. \eqref{L.f.A.non} and \S\ref{S.lin.smooth}. Here $\Gamma_{\la;\bf y} (E)^\R$ denotes the subspace of invariant 
sections of $E$ which vanish to order $\la(y)$ at $y$, for all $y$ in ${\bf y}$,  
and $\ov\M^\R_{\chi, \ell}$ denotes the real Deligne-Mumford moduli space containing $C$. Here for simplicity we assume $C$ is stable, but as in \cite[\S4.2]{ip-gv}, one can always use a "local slice"  \eqref{tau.tauC.A} to parametrize the variations in the domains, after first locally stabilizing the domains if necessary. Those considerations extend to our case, as long as we are working with symmetric choices throughout, by adding  pairs of conjugate marked points in the domain, mapped to fixed pairs of non-special conjugate points of the target. 
For the strata-wise linearization, the second term in \eqref{lin.f.2} is replaced by the tangent space $T_C\cal S$  to the stratum 
$\cal S$ containing $C$. 

As in \cite[\S3.1-\S3.2]{ms}, there are several choices of norms/completions one can use to locally describe a stratum of the family \eqref{rel.moduli.jv.all} as the zero locus of a Fredholm map 
\bear\label{eq.psi}
\Psi(f, \nu)= \del_{J} f -\nu|_{f} 
\eear
between Banach manifolds\footnote{the Banach manifolds used here are Hausdorff, separable, and paracompact, so that Sard-Smale theorem applies.}. 
When $\Psi$ is transverse to 0 at $f$ (i.e. $\de\Psi$ is onto) then a neighborhood of $f$ in its stratum is a Banach manifold modeled on the kernel of the strata-wise linearization. For more details on the standard set-up, we refer the reader to \cite[\S3.1-\S3.2]{ms}; see also \cite[\S4.2]{ip-gv} for a summary of some analytic preliminaries.

\smallskip

After completing \eqref{lin.f.2} in the $\la$-weighted Sobolev norms \eqref{wSob.maps.la.B}, the strata-wise linearization (to this moduli problem) at $f:C\ra \Si$ becomes the Fredholm operator 
\begin{equation}
\label{lin.f.stratum}
\begin{split}
&\L_{f}: \E_f^{k, p} \oplus  T_C{\cal S}\ra \fF_f^{k-1,p}
\\
&\L_f(\xi, h)=\del \xi- [ \nabla_\xi \nu + \tfrac 12 J df h]^{01}
\end{split}
\end{equation}
cf. \eqref{lin.rel.norms.weighted}. While the weighted completions $\E_f^{k, p}$ and $\fF_f^{k-1, p}$ depend on the choice of 
$k, p \ge 1$ (where $kp \ge 2$) the kernel and cokernel of $\L_f$ are independent of these choices, and consist of smooth elements. The full linearization $\de\Psi_f$, which also includes the variation $\mu=\de \nu$ in the parameter $\nu$, is given by 
\begin{equation}\label{lin.ful.lin}
(\xi, h, \mu)\mapsto \L_f(\xi, h)- \mu|_f.
\end{equation}
\smallskip

When $m\ne 0$, $(J, \nu)$-holomorphic maps $f:C \ra \Si[m]$ satisfying the properties listed in Definition~\ref{D.moduli.rel.or} can be analyzed
 as in Remark \ref{R.defn.J.n} by considering their projection $p(f):C \ra \Si$ under the collapsing map $\Si[m]\ra \Si$ in the target. The rubber components of $f$ continue  to project to constant maps to $V$ (since $V$ is 0 dimensional), and satisfy the condition 
\bear\label{ker.nor.not.0}
\ker L^N \ne 0  
\eear 
along this projection, cf. \cite[(6.3) and (7.1)]{ip-rel}. Here $L^N$ is the normal operator, given by 
\bear\label{D.N}
L^N\xi= \del \xi-  \nabla_\xi \nu, 
\eear 
see \eqref{xi.not.0.A}-\eqref{xi.in.ker.A}. When the index of $L^N$ is negative, condition \eqref{ker.nor.not.0} imposes additional restrictions on the rubber components as in \cite[Lemma~6.4]{ip-rel}. 
\medskip
 
\begin{rem}\label{R.unstable}  Assume $\Si$ is connected and $V\ne \emptyset$. If $f:C\ra \Si[m]$ satisfies conditions (i)-(iii) of Definition~\ref{D.moduli.rel.or}, then every connected component of its domain  has at least one marked point (the preimage of $V[m]$). Thus the unstable domain components of a map in the moduli space \eqref{rel.moduli.jv.all} have genus 0; see also Remarks~\ref{R.rubber.comp.pts} and \ref{R.stable.vs.rel}. One can also show that when $(\Si, V)$ is a {\em stable} curve, the trivial rubber components are the only unstable domain components  of the maps in the moduli space \eqref{rel.moduli.jv.all}.  %(but we won't use this fact).
\end{rem}

Below we assume for simplicity that $\Si$ is connected and $V\ne \emptyset$, or more generally that every rational component of $\Si$ has at least one marked point; otherwise, see Remark~\ref{R.unstable.tori} below. 
  
Standard arguments  (cf. \cite[\S3.2]{ms}) imply the following result whose proof we sketch for completeness.
\begin{lemma}\label{L.transv.rel}  Assume that every rational component of $\Si$ has at least one marked point. Then over the parameter space $\JV^\R(\Si, V)$ of real RT-perturbations on $\Si$ compatible with $V,$ every stratum of the real relative moduli space \eqref{rel.moduli.jv.all} is cut transversally. 
\end{lemma}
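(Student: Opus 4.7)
The plan is to follow the standard universal moduli space strategy, adapted to the real setting and to the stratification by combinatorial type. First, I would fix a stratum $\mathcal{S}$ of $\ov\fM^\R_{d,\chi,\vec\mu}(\Si,V)$; by definition of the stratification, this amounts to fixing the topological type of the domain $C$, the height $m$ of the target building $\Si[m]$, the distribution of the irreducible components of $C$ among the levels of $\Si[m]$, the degrees, and the contact profiles at all special points. Then the linearization of $(\bar\partial_J-\nu)$ along $\mathcal{S}$ decomposes, up to the matching conditions at the nodes (which impose only finitely many linear constraints on the evaluations), into a direct sum of linearizations over the level-zero components of $f$ and the rubber components, plus the deformations of the (stable parts of the) domain inside $\ov\M^\R_{\chi,\ell}$; see the formula for $D_f$ already recorded in \eqref{lin.f}. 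It therefore suffices to prove that the universal linearization in $\nu \in \JV^\R(\Si,V)$ is surjective on each component type, and then invoke Sard--Smale on the projection $\pi$ in \eqref{rel.moduli.jv.all}.

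For level-zero components with stable domain, I would run the classical Ruan--Tian argument: the graph $F\colon C\to \U\times \Si$ of a nontrivial map is an embedding away from the (finite) special and critical points, and any cokernel element of $D_f$ is, by elliptic regularity, a smooth section vanishing on a set of codimension $\geq 1$. Using that real RT perturbations $\nu$ can be chosen supported in arbitrarily small neighborhoods of embedded points (and only away from the special points, in accordance with \eqref{nu.real} and the compatibility condition \eqref{nu.rel} with $V$), and that conjugation acts freely on the complement of the fixed locus of the target (empty here, since $\Si$ has only conjugate-paired marked points entering the problem this way), one can pair $\nabla_\xi \nu$ against any putative real cokernel element and force it to vanish. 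This gives surjectivity in the $\nu$-direction and hence transversality along such components. Unstable (ghost) level-zero components cannot occur in our setting because the map is required to be nontrivial on each component and the target is a curve; when $\Si=\P^1$ this means the unstable components are honest branched covers of $\P^1$, and these are classically unobstructed as elements of the Hurwitz space.

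For rubber components, the projection to $V$ is constant so each rubber component is a $(J,\nabla\nu)$-holomorphic map from some component of $C$ into the relevant $\P_V$. Since \eqref{nu.rel} guarantees that $\nabla\nu$ restricted to $T\Si|_V\otimes\T$ along $V$ defines a real RT perturbation on $\P_V$ which is compatible with the $0$- and $\infty$-sections, exactly the same Ruan--Tian argument gives surjectivity along rubber components with stable domain. Trivial (totally ramified) rubber components $\P^1\to\P^1$ have domain automorphism group $\cx^*$, but their normal linearization $D^N$ is complex linear (by the compatibility assumption on $\nabla\nu$ along $V$) and reduces to the classical Mumford calculation for branched covers of $\P^1$, which is surjective modulo \eqref{ker.nor.not.0}; this is exactly the condition already built into the definition of the stratum and does not obstruct transversality in the $\nu$-direction.

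The main obstacle I expect is bookkeeping rather than analysis: one must check that the real constraint $\phi^*J=-J$ and the reality of $\nu$ do not halve the available perturbations in a way that destroys surjectivity. This is handled by noting that the conjugation $\si$ exchanges conjugate contact points and conjugate components of $C$ (since the marked points of $\Si$ come in conjugate pairs and there is no fixed locus hitting $V$), so the linearization $D_f$ is equivariant under a free involution and real perturbations can be prescribed independently on a $\si$-fundamental domain of $C\setminus\{\text{special points}\}$. With this in place, the universal moduli space $\ov\fM^\R_{d,\chi,\vec\mu}(\Si,V)^{\mathcal S}$ over $\JV^\R(\Si,V)$ is a Banach manifold, Sard--Smale gives a residual set of $\nu$ for which the fiber is a smooth manifold of the expected dimension, and intersecting over the countably many strata yields the desired comeager set of transverse $\nu$.
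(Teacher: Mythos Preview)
Your overall strategy matches the paper's: decompose into level-zero and rubber components, stable and unstable; handle unstable components as branched covers of $\P^1$ (already transverse); use variations in $\nu$ for stable level-zero components and in $\nabla\nu$ for stable rubber components; symmetrize to stay inside $\JV^\R(\Si,V)$.

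There is, however, a genuine error in your handling of the real constraint. You assert that the fixed locus of $c$ on $\Si$ is empty (``empty here, since $\Si$ has only conjugate-paired marked points'') and hence that $\si$ acts freely on $C\setminus\{\text{special points}\}$. This is false: a symmetric Riemann surface $(\Si,c)$ may have nonempty real locus regardless of how its marked points are arranged, and the domain $(C,\si)$ can have real points as well. Your claim that real perturbations can be prescribed independently on a $\si$-fundamental domain therefore does not go through as stated; at a real point $x$ the symmetrized perturbation is constrained, and a naive ``half-the-degrees-of-freedom'' count could fail.

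The paper's fix is the standard one and is what you should insert in place of the freeness claim. Given a nonzero cokernel element $\eta$, unique continuation guarantees $\eta(x)\ne 0$ on a dense open subset of the component, so one may choose the test point $x$ to be \emph{non-real} (and, on level-zero components, with $f(x)\notin V$). A variation $\mu$ supported near $(x,f(x))$ and its conjugate, obtained by prescribing $\mu|_{(x,f(x))}=\eta(x)$ and symmetrizing, then lies in $\JV^\R(\Si,V)$ and pairs nontrivially with $\eta$. The same maneuver handles the rubber case: the paper checks transversality of the condition $\ker D^N\ne 0$ by picking $x$ non-real with both $\kappa(x)\ne 0$ (for some $0\ne\kappa\in\ker D^N$) and $\eta(x)\ne 0$, and constructing a variation compatible with \eqref{nu.rel} such that $(\nabla_\kappa\mu)|_{(x,f(x))}=\eta(x)$. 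With this correction your argument is complete and coincides with the paper's.
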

\begin{proof} As in the proof of \cite[Proposition~3.2.1]{ms}, transversality follows provided we have enough variations in the parameters $\nu$ to ensure that the linearization of the equations cutting out each stratum of the moduli space has trivial cokernel. 

Assume $f:C\ra \Si[m]$ is a real $(J,\nu)$-holomorphic map as in \eqref{f.to.Si} for some  $\nu\in \JV^\R(\Si, V)$. 
Decompose its domain $C$ into stable and unstable components. When the divisor $V$ is non-empty, also decompose $f$ into rubber components (collapsed to points in $V$ under the projection $p:\Si[m]\ra \Si$) and non-rubber ones (level zero components), and consider the projection of $f$ to $\Si$; see also \eqref{lift.f.A}-\eqref{f.decomp.2}. 
 
By definition, the Ruan-Tian perturbations $\nu$ used here are pulled back from $\U \ti \Si$ and $\nu$ must vanish along $ \U \ti V$. Therefore such perturbations identically vanish on both unstable domain components and on the projection to $V$ of the rubber components, so these must be treated separately. To obtain transversality on the stable components, we use the fact that the restriction to these components defines an embedding into $\U\ti \Si$.  Transversality then follows by using variations either in $\nu$ (on the non-rubber components) or else in $\nabla \nu$ (on the rubber components) which have prescribed values at a suitable collection of points; we just need to ensure that the variations can be chosen so that they are tangent to the parameter space $\JV^\R(\Si, V)$, i.e. satisfy conditions \eqref{nu.real} and \eqref{nu.rel}. 

\smallskip

Specifically, decompose $f$ as in  \eqref{f.decomp} into the non-rubber part $f_0:C_0\ra \Si$ and rubber parts $f_r:C_r \ra \P_V$ and consider the projection 
\bear\label{3.p.r}
\rho:=p(f):C \ra \Si
\eear 
of $f$. %This is a stable map to $\Si$, obtained from $p\circ f$ after contracting/forgetting all the trivial rubber parts.
Then the restriction of $\rho$ to $C_0$ is $f_0$ while its restriction $\rho_r$ to $C_r$ is a map to $V$. Moreover, 
\begin{enumerate} 
\item $\rho:C\ra \Si$ is $(J,\nu)$-holomorphic 
\item the restriction $\rho_r:C_r\ra V$ is a real map which satisfies the condition \eqref{ker.nor.not.0}, cf. \eqref{xi.not.0.A}-\eqref{xi.in.ker.A}. 
\end{enumerate}
We will first consider the conditions on the non-rubber part and then on each rubber part, and show they are cut transversally. 

\medskip

\non \textsc{Step 1. (Non-rubber components)} Consider first the conditions on the non-rubber part of $f$, i.e. that $f_0: C_0 \ra \Si$ is $(J, \nu)$ holomorphic (here $C_0\subseteq C$ may be nodal). As in the proof of \cite[Proposition~3.2.1]{ms}, the surjectivity of the linearization $\de\Psi_{f_0}$ fails only if we can find a nonzero $\eta\in (\fF^{0,p}_{f_0})^\vee$ such that 
\bear\label{D.trans.eta}
\int_{C_0} \lg \L_{f_0}\xi,  \eta\rg=0 \quad \mbox{ and }\quad \int_{C_0} \lg \mu, \eta\rg=0 
\eear
for all  $\xi\in \E_{f_0}^{1,p}$ and all variations $\mu$ in the parameter $\nu$. But then $\eta\in \cok \L_{f_0}$, thus by elliptic regularity $\eta$ is continuous on each component of $C_0$. 
%  Moreover, since $\eta$ is in the coker of a real CR operator, then by unique continuation, if $\eta$ vanishes on a neighborhood of a point, then it vanishes identically on the entire irreducible component containing that point. 
We next show that the restriction of $\eta$ must vanish on every component of $C_0$. 
\medskip 

\non\textsc{Step 1a. (Non-rubber, stable components)}  Consider first the restriction of $f$ to a stable, non-rubber component. Assume that the restriction of $\eta$ to this component is nonzero. Then we can find a point $x$ on it where $\eta(x)\ne 0$. By the continuity of $\eta$, we can assume that 
\begin{enumerate}[(i)]
\item $x\in \U$ is not real and not special 
\item $f(x)$ is not in $V$ (because the image of a non-rubber component cannot lie entirely in $V$).
\end{enumerate}
Regard $\eta(x)$ as an element of $\Hom^{01}(T_xC, T_{f(x)}\Si)$. Then as in the standard transversality proof \cite[Proposition~3.2.1]{ms} (but after symmetrizing), we can find a symmetric  variation $\mu$ in $\nu$ supported sufficiently close to the image of $x$ (and its conjugate) such that the values of $\mu$ and $\eta$ agree at $x$ (and therefore also at $c(x)$), i.e.  
\bear\label{mu=eta} 
\mu|_{(x, f(x))} = \eta(x). 
\eear
By construction, such variation $\mu$ satisfies condition \eqref{nu.real}. Since $f(x)\in X\setminus V$, it also satisfies \eqref{nu.rel}  whenever $\mu$ is supported sufficiently close to $(x, f(x))$ and its conjugate. Furthermore, if $\beta_\ep$ is a symmetric bump function 
supported near $(x, f(x))$ and its conjugate, then $\beta_\ep \mu$ is also a variation in $\nu$ satisfying \eqref{mu=eta} and whose support is arbitrarily close to $x$ (and its conjugate). But $\eta$ is continuous and $\eta(x) \ne 0$, thus for sufficiently small $\ep$, 
\best
\quad \int_{C_0} \lg \beta_\ep\mu, \eta\rg \ne 0
\eest
contradicting \eqref{D.trans.eta}. Therefore $\eta$ vanishes on all the stable, non-rubber components.

 \medskip
 \non\textsc{Step 1b. (Non-rubber, unstable components)} Consider next the restriction of $f:C\ra \Si[m]$ to the union $C_{0u}\subseteq C$ of all the non-rubber but unstable domain components. This is a $J$-holomorphic map $f_{0u}:C_{0u} \ra \Si$ since the perturbation $\nu$ vanishes on these components. Consider the restriction $f_i$ of $f$ to a connected component $C_i$ of $C_{0u}$ (here $C_i\subseteq C$ may be nodal). Then the preimage of $V\subset \Si$ consists of finitely many points  ${\bf y}_i$ of $C_i$, all of them special points of $C$.  Moreover, $f_i$ is a stable map (since $f$ is relatively stable), thus has positive degree. Assume for simplicity that $\Si$ is connected. Then  either (i) $C_i$ has genus 0, 
 and ${\bf y}_i$ and $V$ have at most 2 points or else (ii) $C_i$ has genus 1 and $V=\emptyset$; see also Remark~\ref{R.unstable}.
  
Case (i). This situation does not occur when the target $\Si$ satisfies $g(\Si)\ge 1$ since such $\Si$ is aspherical.  So the only possibility left is when $V\subset \Si=S^2$ consists of at most one pair of conjugate points. Then the restriction of $\eta$ is in the cokernel of a holomorphic $\del$-operator  on this component $C_i$ (since $\nabla \nu$ vanishes on unstable domain components). But $C_i$ is a genus zero curve thus $H^1(C_i, f_i^* \T_\Si)$  vanishes by Serre duality since $c_1(\T_\Si)=\chi(\Si\setminus V)\ge 0$ in this case. 

Case (ii). This situation cannot occur when  $g(\Si)\ge 2$, since there are no positive degree holomorphic maps from a genus 1 curve to a higher genus curve. Since we assumed that $\Si$ has no genus 0 components without marked points, the only possibility left is that $g(\Si)=1$ and $f_i$ is an unramified cover of a torus by a torus. But then the cokernel of the restriction of the linearization \eqref{lin.f.stratum} to $C_i$ also vanishes. 
 
Therefore $\eta$ also vanishes on all the unstable, non-rubber components.

\smallskip

Thus $\eta=0$ on all non-rubber components. This implies that $\de\Psi_{f_0}$ is surjective, which means that the conditions on the non-rubber part $f_0$ of $f$ are cut transversally. 
 \smallskip
 
\non\textsc{Step 2. (Rubber components)} Finally, consider the conditions on rubber parts of $f$ described in the paragraph containing  \eqref{3.p.r}, which we also want to show are cut transversally. 

Since $V=V^+\sqcup c(V^+)$ cf. \eqref{marked.points}, the rubber parts of $f$ come in conjugate pairs, so it suffices to restrict attention to each connected component $C_i^+$ which is mapped to $V^+$ under the projection $\Si[m]\ra \Si$. 

Consider the restriction of $f$ to such connected component $C^+_i$. Here $C_i^+\subset C$ may be nodal, but note that it is either a trivial rubber component or else it is stable cf. Remark \ref{R.rubber.comp.pts}. Either way, the pullback of $\nu$ identically vanishes along this component since it projects to a point in $V$. The restriction $\rho_i$ of $\rho=p(f)$ to $C_i^+$ is a constant map to $V^+$ which satisfies the additional condition
\bear\label{DN.not.0.5}
\ker L^N_{\rho_i}\ne 0.
\eear 
Here $L^N_{\rho_i}=\del - \nabla \nu$ is a holomorphic $\del$-operator on the trivial complex 
line bundle $\rho_i^*N_V$ over $C^+_i$; its complex index is equal to $1-g(C^+_i)$ cf. \eqref{A.ind.LN}.  In particular, \eqref{DN.not.0.5} is automatic on the genus zero components $C_i^+$, so we may assume that $C_i^+$ is stable, has genus at least 1, and $\ind L^N_{\rho_i} \le 0$.   
\medskip

To check that the subset of such pairs $(\rho, \nu)$ which satisfy the condition \eqref{DN.not.0.5} is cut transversally, we similarly adapt the proof of \cite[Lemma~6.4]{ip-rel} to the real setting; see also the proof of \cite[Proposition~5.3]{ip-gv}. Regard  the normal operator as a section $(\rho, \nu)\mapsto L^N_\rho$ of a bundle $\rm{Fred}\ra \cal B$ of {\em complex} Fredholm operators. Here $\cal B$ is the space of pairs $(\rho, \nu)$ where $\nu\in \JV^\R(\Si, V)$, and $\rho:C_i^+\ra V^+$ is a constant map on $C_i^+\subseteq C$. Since $C_i^+$ is connected, then by the last sentence of Remark~\ref{R.xi.nor.A} it suffices to prove that this section is transverse to 
\best
\rm{Fred}^1=\{ D\in \Fred\; | \; \text{dim}_\cx \ker D=1\}. 
\eest
The fiber at $D$ of the normal bundle to $\rm{Fred}^1$ is $\Hom (\ker D; \cok D)$. 

Let $0\ne \kappa\in  \ker L^N$ and $\eta\ne 0$ an element of the cokernel of $L^N$. By unique continuation, we can find a point $x$ on the domain so that 
\begin{enumerate}[(i)]
\item both $\kappa(x)$ and $\eta(x)$ are nonzero
\item $x\in \U$ is not real and not special, but of course $x$ must be mapped by $\rho$ to $V$. 
\end{enumerate}
It therefore suffices to construct a variation $\mu$ in $\nu$ such that 
\bear\label{var.nabla.pos}
\int_{C^+_i} \lg (\de_\mu L^N)(\kappa), \; \eta\rg =- \int_{C^+_i} \lg \nabla_\kappa \mu, \; \eta\rg \ne 0. 
\eear
Here $\de_\mu L^N= - \nabla \mu$ is the variation in the normal operator as we vary $\nu$ but fix the map $\rho:C^+_i\ra V$. Since $C_i^+$ is stable, then as in the proof of  \cite[Lemma~6.4]{ip-rel}, after symmetrizing, we can find a variation $\mu$ in $\nu$ compatible with $V$, supported near the image of $x$ (and its conjugate) such that the values of $\nabla_{\kappa}\mu$ and $\eta$ agree at $x$ (and therefore also at its conjugate), i.e. 
\bear\label{mu=nabla.eta} 
(\nabla_{\kappa}\mu)|_{(x, p(f)(x))}= \eta(x). 
\eear
This variation $\mu$ then satisfies both conditions \eqref{nu.real} and \eqref{nu.rel}. The result then follows as before: multiplying $\mu$ by (symmetric) bump functions $\beta_\ep$ supported around the image of $x$ (and its conjugate) in $\U \ti \Si$ gives a sequence of variations $\beta_\ep\mu$, still satisfying \eqref{nu.real}, \eqref{nu.rel} and \eqref{mu=nabla.eta}, but whose support is arbitrarily close to $x$ (and its conjugate). This shows that \eqref{var.nabla.pos} is satisfied.
\end{proof} 

\begin{rem}\label{R.codim.strata} The virtual dimension of a stratum is the index of the strata-wise linearization. For the real relative moduli space $\ov \M^\R_{d, \chi, \vec \mu} (\Si, V)$ or more generally for \eqref{rel.moduli.jv.all}, the virtual (co)dimensions of its strata can be calculated as in \cite[\S7]{ip-rel}, see also Remark~\ref{R.dim.counts}. In particular, a stratum consisting of maps $f:C \ra \Si$ (i.e. without rubber components) has virtual codimension $l$, where $l$ is the number of (ordinary) nodes of the domain. A stratum consisting of maps $f:C \ra \Si[m]$ has virtual codimension (at least) $l+2|m|$.
\end{rem}
 
 \begin{rem}\label{R.unstable.tori} In the proof above, the assumption that every rational component of $\Si$ has at least one marked point is only used in Step 1B case (ii). When $V=\emptyset$ and $\Si$ has genus 0 components, there are a few strata of the (absolute) moduli space $\ov \fM^\R_{d, \chi} (\Si, \emptyset)$ in \eqref{rel.moduli.jv.all} which are not cut transversally. These strata consist of stable maps $f:C\ra \Si$ whose domain has a connected component $C_i$ which is a nodal genus 1 curve without any marked points. The restriction of $f$ to $C_i$ is holomorphic since RT perturbations vanish on such components. If $C_i$ contains a torus on which $f$ is constant, then the cokernel of the linearization  \eqref{lin.f.stratum} on $C_i$ is 1-dimensional. All the other strata are cut transversally.
 \end{rem}

\subsection{Transversality for a family of targets}\label{S.transv.fam} Consider next a family $\F=\cup_s \Si_s$ of targets as in \eqref{family.smoothings}, and let $\cP=\JV^\R(\cal F_{/\De})$ be the space of real RT perturbations defined in \eqref{RT.rel.fam}. For the rest of the paper, we fix the topological data $d, \chi, \vec \mu$ and denote by 
\bear\label{mo.over}
 \ov\fM(\Si_s)= \ma\cup_\nu \ov\fM^\R_{d, \chi,\mu}(\Si_s, V_s)_ \nu \ra \cP
\eear
the family of real relative moduli spaces defined as in \eqref{rel.moduli.jv.all} for $\Si=\Si_s$ but for parameters $\nu \in \cP$. Note that the fiber over 
$\nu\in \cP$ depends only on the pullback $\nu_s$ to $\Si_s$, cf. \eqref{nu.pullsback}.  We also fix $s_0\ne 0$ in $\De$ and denote by $I$ the segment $[0, s_0]\subset \De$. 

The families in \eqref{mod.fam.la} and \eqref{map.attach.0} similarly extend to families 
\bear\label{mo.over.fam}
\ov\fM(\F_{/I})=\ma\cup_{s\in I}  \ov\fM(\Si_s) \quad \mbox{ and } \quad  \ov\fM(\wt \Si)= \ma\sqcup_{\la \vdash d} \ov \fM_\la(\wt \Si), 
\eear
over $\cP$. Here $\wt \Si$ is the normalization of the nodal fiber $\Si_0$ and the fiber of 
$ \ov\fM_\la(\wt \Si)$ at $\nu\in \cP$ is by definition 
\bear\label{4.fam.M.res}
\ov \fM_\la (\wt \Si)_\nu= \ov \fM^\R_{d, \chi+4\ell(\la), \vec \mu,\la, \la}(\wt \Si, \wt V)_ \nu. 
\eear

\begin{rem}\label{R.m.0.same} Recall that elements $f$ in $\ov \fM (\Si_0)$ have the property that the preimage of the nodes of the target is a subset of the nodes of the domain, cf.  \S\ref{S.nod.target} and Definition~\ref{D.moduli.rel.or}(ii). When $f$ has no rubber components, let $\wt C$ be the (partial) normalization of its domain at these contact nodes, and let $\wt f:\wt C\ra \wt \Si$ be the lift of $f$ to the normalizations. If $f:C_0 \ra \Si_0$ is a real map satisfying the conditions (i)-(v) in Definition~\ref{D.moduli.rel.or}, then its lift $\wt f:\wt C \ra \wt \Si$ is also a real map satisfying essentially the same conditions, the only difference being that each contact node $y$ has been replaced by two points $y_1, y_2$ at which $\wt f$ has the same contact multiplicity $\la(y)$. Thus {\em locally} there is no difference between the conditions describing a stratum of $\ov \fM (\Si_0)$ near $f:C_0 \ra \Si_0$ and a stratum of $ \ov \fM(\wt \Si)$ near $\wt f: \wt C \ra \wt \Si$. The same is true when $f$ has rubber components. 
\end{rem} 

Below we assume that $\Si_0$ has no connected component which has genus 0 but has no marked points on it.  Then the normalization $\wt \Si$ of $\Si_0$ and the smooth deformations $\Si_s$ of $\Si_0$ also have this property. 

The proof of Lemma~\ref{L.transv.rel} extends essentially verbatim, after replacing $\Si$ by the family $\F$, to give the following result. 
\smallskip

\begin{lemma}~\label{L.transv.rel.F} Assume that every rational connected component of $\Si_0$ has at least one marked point.  Then over the space $\JV^\R(\cal F_{/\De})$ of real RT perturbations on the family, every stratum of the following families of real relative moduli spaces 
\bear\label{mod.to.consider}
\ma\cup_{s\in I} \ov\fM(\Si_s), \quad  \ov\fM(\Si_{s_0}),    \quad \ov\fM(\wt \Si), \quad \mbox{ and } 
\quad \ov\fM(\Si_0)  
\eear
are cut transversally. 
\end{lemma}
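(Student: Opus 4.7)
The plan is to adapt the argument of Lemma~\ref{L.transv.rel} essentially verbatim, replacing the target $\Si$ by the total space $\F$ of the family \eqref{family.smoothings}. First I would observe that the four cases in \eqref{mod.to.consider} reduce to a uniform argument via the pullback maps \eqref{si.to.F} and \eqref{nu.pullsback}: a $(J,\nu)$-holomorphic map $f$ to a building over $\Si_s$, $\Si_0$, or $\wt\Si$ factors through an inclusion into a fiber of $\F$, and any variation $\mu \in T_\nu \JV^\R(\cal F_{/\De})$ pulls back to give a variation of $\nu_s$ on $\Si_s$ (respectively $\wt\nu$ on $\wt\Si$) of the correct form. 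For the total family $\ma\cup_{s\in I} \ov\fM(\Si_s)$, the parameter $s$ enters as an extra coordinate, but the fibration compatibility $\pi_*\nu = 0$ forces every connected component of $f$ to map into a single fiber, so fiberwise transversality together with the freedom to choose $\mu$ independently near different fibers delivers the family transversality.

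Next I would decompose the domain of $f$ into stable/unstable and rubber/non-rubber components exactly as in the proof of Lemma~\ref{L.transv.rel}. Unstable spherical components and trivial rubber multiple covers of $\P^1$ are already cut transversally by the argument recalled in \S\ref{S.thin.cpt} and require no variation of $\nu$. For each stable non-rubber component, I would choose a non-real, non-special point $x$ at which a cokernel element $\eta$ of the component-wise linearization is nonzero, with $f(x)$ away from the divisor-plus-nodal locus $V_\F \subset \F$ (the union of marked points and nodes of the family). Since $\eta(x)$ takes values in $\Hom^{0,1}(T_xC, \T_{f(x)})$, it lives in the vertical direction, so the required variation $\mu$ realizing \eqref{mu=eta} can be constructed supported in a small product neighborhood of $(x, f(x))$ in $\U \ti \F$, with values in the relative tangent bundle $\T \subset T\F$. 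For stable rubber components I would analogously adapt \cite[Lemma~6.4]{ip-rel} using a nonzero $\kappa \in \ker D^N$ and producing $\mu$ realizing \eqref{mu=nabla.eta}, again valued in $\T$.

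The main obstacle, relative to Lemma~\ref{L.transv.rel}, is ensuring the variation $\mu$ actually lies in the constrained subspace $\JV^\R(\cal F_{/\De})$: beyond reality, it must satisfy the fibration condition $\pi_*\mu = 0$ and the 1-jet conditions along the full locus $V_\F$. Both constraints are handled uniformly by taking $\mu$ to be vertical (valued in $\T$) and by localizing its support. When $f(x) \notin V_\F$, choosing the support disjoint from $V_\F$ makes the 1-jet condition automatic; when $f(x) \in V_\F$ (the rubber case), the 1-jet construction of \cite[Lemma~6.4]{ip-rel} carries over at nodes exactly as at marked points, since both are governed by the same condition \eqref{nu.rel} in the definition of $\JV^\R(\cal F_{/\De})$. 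Once these local variations are in place, the standard bump-function argument that concludes Lemma~\ref{L.transv.rel} yields transversality on each stratum of each of the four families in \eqref{mod.to.consider}.
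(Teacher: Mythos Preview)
Your proposal is correct and follows essentially the same approach as the paper's proof: reduce to checking that the variations $\mu$ constructed in Lemma~\ref{L.transv.rel} can be chosen tangent to $\JV^\R(\cal F_{/\De})$, handle the fibration condition by taking $\mu$ vertical, and treat rubber components via the 1-jet construction of \cite[Lemma~6.4]{ip-rel}. The paper is slightly more explicit in separating the rubber case into two subcases---$f(x)$ mapped to a marked point of the family (where $\pi$ is locally a fibration, so the single-fiber construction extends) versus $f(x)$ mapped to a node of $\Si_0$ (where one uses the local model of $\F$ at the node to construct $\mu$ on the total space)---whereas you treat these uniformly; this is a difference of exposition, not of substance.
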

\begin{proof} A stratum of each one of these families consists of maps $f$ whose domain and target are symmetric surfaces with fixed topological type. It suffices to check that the variations in $\nu$ constructed  in the proof of Lemma~\ref{L.transv.rel} can be chosen tangent to the space of perturbations  
$\cP=\JV^\R(\cal F_{/\De})$ on the family. This means they can be defined on $\U \ti \F$ and chosen so that they satisfy (a) $\pi_*\nu=0$,  (b) condition \eqref{nu.real} for $X=\F$, and (c) condition \eqref{nu.rel} for $X=\F$ and $V\subset X$ the union of the marked divisor of $\F$ and the nodes of $\Si_0$.

Let $f$ be an element of any one of the moduli spaces \eqref{mod.to.consider}. Its domain is a real (possibly nodal) marked curve $C$ and its target is a building $\Si[m]$ as in \eqref{f.to.Si}, where $\Si$ is either some fiber $\Si_s$ of $\F$ for $s\in I$ or else is the normalization $\wt \Si$ of the nodal fiber $\Si_0$. In turn, both $\Si_s$ and $\wt \Si$ map to $\F$ cf. \eqref{si.to.F}. In particular, $f$ descends to a map $p(f)$ to $\F$, such that its rubber components are mapped to the marked points of $\Si_s$ or else to the marked points and nodes of $\Si_0$. 

Moreover, locally every stratum of $\ov \fM(\Si_0)$ can be described by the same local defining equations as those of a stratum of $\ov\fM (\wt \Si)$ cf. Remark~\ref{R.m.0.same}. 

Thus it suffices to show that the strata of the moduli spaces $\ov\fM(\wt \Si)$ and $\ov\fM(\Si_{s\ne 0})$ are cut transversally over the parameter space $\cal P$; for these moduli spaces, the rest of the proof proceeds as in that of Lemma~\ref{L.transv.rel}. 

We again separately consider the condition that $p(f):C\ra \Si_s \subset \F$ be $(J, \nu)$-holomorphic, and the additional 
condition $\ker L^N\ne 0$ on the rubber components. Unstable domain components continue to be holomorphic, and must be treated separately exactly as in the proof of Lemma~\ref{L.transv.rel}.
 
\medskip

\non\textsc{Step 1 (Non-rubber components).} Here it suffices to construct a variation $\mu$ tangent to $\cP$ satisfying \eqref{mu=eta} at a point $x$ on the domain such that 
\begin{enumerate}[(i)]
\item $x\in \U$ is not real and not special
\item the image $f(x) \in \Si_s\subset \F$ is  not a marked point nor a node.  
\end{enumerate}
For that, we use the fact that both $\U $ and $\F$ are locally trivial fibrations in a sufficiently small neighborhood of such a point $(x, f(x))$ (and its conjugate), thus the variation $\mu$ constructed on a single fiber of $\F$ as in Lemma~\ref{L.transv.rel} extends to the family. 
\medskip

 \non\textsc{Step 2 (Rubber components)} For each rubber component $C'$ of the domain, denote by 
 $\rho$ the restriction of $p(f)$ to $C'$. Then $\rho$ maps $C'$ to a single point of $\F$;  denote it $q \in \Si_s\subset \F$. Note that $q$ must be a special point of $\Si_s$. We want to show that the condition on $(\rho, \nu)$ that $\ker L^N\ne 0$ is cut transversally. Here we separate the case when 
 \begin{enumerate}[(a)] 
 \item $q$ is a marked point of the family; then $L^N_\rho$ is an operator on $\rho^*N_q=\rho^* T_q\Si_s$.
 \item $q$ is a node of $\Si_0$;  then $q$ has two lifts $q_1, q_2$ to the normalization. For each one, we get a normal operator 
 $L^N_{\rho_i}$ on $\rho^*N_{q_i}=\rho^* T_{q_i}\wt \Si$. However, the condition $\ker L^N_{\rho_i}\ne 0$ is independent of the choice of lift, cf. Remark \ref{R.nu.node}.
 \end{enumerate}
 \smallskip
 
 \non \textsc{Step 2a.} Assume $q$ is a marked point of $\Si_s$. As in the paragraph containing \eqref{var.nabla.pos}, it 
 suffices to ensure that there exists a variation $\mu$ tangent to $\cP$ satisfying  \eqref{mu=nabla.eta} at a non-real, non-special point $x\in C'$ where $\kappa(x) \ne 0$ and $\eta(x)\ne 0$. But at a marked point $q$ of the family $\F$, 
 $\pi$ is a local fibration in a sufficiently small neighborhood of the marked point, thus the variation $\mu$ constructed on a single fiber of $\F$ as in Lemma~\ref{L.transv.rel} extends to the family.
 \medskip
 
\non   \textsc{Step 2b.} Assume $q$ is a node of $\Si_0$, and chose a lift $q_1\in \wt \Si$ to the normalization. As in the proof of Lemma~\ref{L.transv.rel}, we have $\kappa\in \ker L^N$ and $\eta\in \cok L^N$ such that both $\kappa(x)$ and $\eta(x)$ are nonzero, cf. (i)-(ii) above \eqref{var.nabla.pos}. Here $x$ is a fixed  nonspecial, nonreal point of $C$, $\kappa(x)\in T_{q_1}\wt\Si\hookrightarrow T_{q}\F$ while $\eta(x) \in  \Hom^{01}(T_xC, T_{q_1}\wt\Si)$. It suffices to check that we can construct a variation $\mu$ tangent to $\cP$ such that \eqref{mu=nabla.eta} holds, 
i.e. $(\nabla _\kappa \mu)|_{(x,q)}=\eta(x)$. 

Using the local model 
\eqref {loc.coord.C.sm.node} of $\F$ around the node $q$, we can construct a variation $\mu$ defined in a neighborhood of the point $(x, q)$ in  $\U \ti \F$ as follows. Identify a neighborhood of $q$ in $\F$ with a neighborhood of $0$ in  $\cx^2$ as in the paragraph containing \eqref {loc.coord.C.sm.node}. Let $\mu\in \Hom^{01}(\T_\U, T\F)$ be given by $\mu_{(w, z)}=(z_1\al_1(w),z_2 \al_2(w))$ for all $w\in \U$ close to $x$ and $z=(z_1, z_2)\in \cx^2$ small.  Here $\al_i \in \Hom^{01}(\T_\U, \cx)$ and $\al_1+\al_2=0$. This ensures that $\mu$ satisfies conditions (a)-(c) below \eqref{RT.rel.fam}; see also  \eqref{cond.node.new}. Moreover, since $\kappa(x), \eta(x)\ne 0$ we can arrange that $\al_1(x)\otimes \kappa(x)= \eta(x)$ at the point $x$, thus $(\nabla _\kappa \mu)|_{(x,q)}=\eta(x)$. Multiplying by a bump function and symmetrizing gives  the desired variation tangent to $\cP$.
\end{proof} 
Combining the Sard-Smale Theorem with Lemma~\ref {L.transv.rel.F} gives the following result.
\begin{cor}\label{C.baire} There exists a Baire subset $\cP^*\subset \cP=\JV^\R(\cal F_{/\De})$ such that 
for all $\nu\in \cP^*$, all the strata of the fibers over $\nu$ of all the moduli spaces  \eqref{mod.to.consider} are cut transversally, thus are smooth manifolds of the expected dimension. 
\end{cor}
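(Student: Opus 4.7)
The plan is to deduce this corollary from Lemma~\ref{L.transv.rel.F} by a standard Sard--Smale argument applied strata-wise, combined with the fact that there are only countably many strata to consider.

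First I would fix one of the four family moduli spaces in \eqref{mod.to.consider} and one of its strata $\cS$. By Lemma~\ref{L.transv.rel.F}, the universal moduli space $\cS \subset \ov\fM(-)$, considered as the total space parametrized by $\cP$, is cut transversally, and hence is a (possibly infinite-dimensional) smooth Banach manifold once we set things up in the usual $L^p_k$ / $C^\infty$ framework. The projection
\[
\pi_\cS : \cS \lra \cP = \JV^\R(\F_{/\De})
\]
is then a smooth map whose fiber at $\nu$ is the stratum in question of the fiber moduli space over $\nu$. Standard elliptic theory shows that $\pi_\cS$ is Fredholm: its linearization at $([f],\nu)$ is the projection onto the $\nu$-factor of the kernel of the combined linearization, whose Fredholm index equals the virtual dimension of the stratum. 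A point $\nu\in\cP$ is a regular value of $\pi_\cS$ precisely when the stratum-wise linearization at every $[f]$ in the fiber is surjective, i.e.\ when that stratum of the fiber moduli space over $\nu$ is cut transversally.

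Next I would apply the Sard--Smale theorem to $\pi_\cS$ to conclude that the set of regular values $\cP^*_\cS \subset \cP$ is Baire (residual). This uses the standard trick of exhausting $\cP$ and $\cS$ by $\sigma$-compact subsets and then upgrading $C^\infty$ to $C^k$ via the usual Floer-type argument; the parameter space $\JV^\R(\F_{/\De})$ is a separable Fr\'echet space, so Sard--Smale in this setting is routine. The only detail to verify is that the linearization of $\pi_\cS$ remains Fredholm when we allow the domain $C$ of $f$ and the target $\Si[m]$ to carry marked nodes/rubber levels; this follows because the stratum-wise linearization at such a map differs from the linearization on the top stratum by the gluing parameters, which contribute a finite-dimensional correction, and the rubber normal operator $D^N$ considered in Lemma~\ref{L.transv.rel} is likewise Fredholm.

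Finally, I would observe that each of the moduli spaces in \eqref{mod.to.consider} admits a stratification indexed by discrete combinatorial data (topological type of the symmetric domain $C$, number and type of nodes, assignment of components to levels of the target building, contact multiplicities over the special points, and for $\ov\fM(\wt \Si)$ the partition $\la$). This data takes values in a countable set. Therefore the set
\[
\cP^* \;=\; \bigcap_\cS \cP^*_\cS
\]
taken over all such strata $\cS$ in all four moduli spaces in \eqref{mod.to.consider} is a countable intersection of Baire subsets of $\cP$, hence Baire. For any $\nu\in\cP^*$, every stratum of every fiber is cut transversally and so is a smooth manifold of the expected dimension, as claimed. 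The only real obstacle is the bookkeeping for the Sard--Smale step in the presence of rubber components; but Lemma~\ref{L.transv.rel.F} has already done the analytic work of producing the needed variations $\mu$ tangent to $\cP$, so this reduces to packaging.
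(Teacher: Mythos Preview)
Your argument is correct and is precisely the standard Sard--Smale deduction the paper has in mind; the paper states this as a corollary of Lemma~\ref{L.transv.rel.F} without proof, leaving the strata-wise Sard--Smale step and the countable intersection over topological types as understood. There is nothing to add.
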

By a \textsf{generic parameter} we mean an element of a Baire subset of the parameter space (a Baire subset is a countable intersection of open and dense sets).

\medskip
%%%%%%%%%%%%%%%
%%%%%%%%%%%%%%%
%%%%%%%%%%%%%%%%
\section{VFCs for real relative moduli spaces}\label{S.VFC.mod} 

\medskip

In this section we use the transversality results in \S\ref{S.VFC.rel} and \S\ref{S.transv.fam} to show that the moduli spaces considered there are thin compactifications (as briefly reviewed in \S\ref{S.thin.cpt}) 
or more generally thinly compactified families in the sense of \cite[\S3]{ip-thin}. This implies that the moduli spaces carry a virtual fundamental class (VFC) in rational Cech homology, cf. \cite[\S4]{ip-thin}.
\smallskip

\subsection{Thinly compactified families and their VFC} First we recall the general results of \cite[\S4]{ip-thin}. 
Roughly speaking, a proper map
\bear\label{famMP} 
	\ov\fM\ra \cP 
\eear
is a thinly compactified family over $\cP$ provided (i) for generic $\nu\in \cP$ the fiber $\ov\fM_\nu$ is a thin compactification, i.e. an oriented topological manifold away from a (homologically) codimension 2 strata  and (ii) over a generic path $\gamma$ in $\cP$, $\ov \fM_\gamma$ is a thinly compactified cobordism, i.e. an oriented cobordism away from a (homologically) codimension 2 strata. See \cite[Defn~2.1 and 3.1]{ip-thin} and \cite[\S2.4]{ip-thin} for the precise details.

Since the fundamental class (in rational Cech homology) of a manifold uniquely extends to any thin compactification by \cite[Thm~2.4]{ip-thin},  condition (i) above defines the fundamental class of the moduli space for generic parameter $\nu$. For non-generic parameters, the virtual fundamental class (VFC) is obtained as the limit (in rational Cech homology) of nearby classes, which is well defined by condition (ii). In particular, as in \cite[Thm~4.2]{ip-thin} any thinly compactified family $\ov \fM\ra \cP$ carries a unique VFC\footnote{called the relative fundamental class in \cite{ip-thin}.}
\bear\label{VFC.gen}
[ \ov\fM_\nu]^\vir \in\cHH_* (\ov \fM_\nu; \Q)  
\eear
defined for all $\nu\in \cP$, and satisfying a consistency condition over all paths $\gamma$ in $\cP$, cf. \cite[Defn~4.1]{ip-thin}. 

Finally, thin compactifications behave well under passing to covers, cf. \cite[\S2.1]{ip-thin} or enlarging the parameter spaces cf. \cite[\S6]{ip-thin}. 
\medskip

\subsection{The VFC for real relative moduli spaces}\label{S.VFC}  We next apply these consideration to the families of moduli spaces considered in section \S\ref{S.constr.vfc}. Before we proceed, note that being a finite dimensional manifold is a topological condition, which can be verified by providing a local model; on the other hand, its orientability and a choice of orientation are global questions. This section summarizes why the transversality results proved so far suffice to 
address the first question for the generic fiber of these moduli spaces (away from virtual codimension 2 strata), and to identify their (actual) orientation sheaf with a certain determinant line bundle (away from virtual codimension 2 strata). Orientations are deferred to \S\ref{S.orient}.

\smallskip 
We start with the case of a fixed, smooth target, and then move to a family of targets. Let $\Si$ be a (smooth) symmetric curve with $r$ pairs of conjugate marked points $V$, and assume that $\Si$ has no rational components without any marked points.  For every $d$, $\chi$, $\vec \mu$ fixed, consider the family 
\bear\label{rel.moduli.jv} 
\begin{tikzcd}  
\ov \fM ^{\R}_{d, \chi, \vec \mu}(\Si, V)\ar[r]&\JV^\R(\Si, V)  
\end{tikzcd}  
\eear
of moduli spaces defined by \eqref{rel.moduli.jv.all}. Its virtual orientation line at $f$ is the determinant of the 
linearization. Recall that when $f:C \ra \Si$ (i.e. $f$ has no rubber components) the linearization $L_f $ is given by \eqref{lin.f.2} and 
$\det L_f$ is defined using its Fredholm completion \eqref{lin.rel.norms.weighted}. However, it is more convenient here to use an equivalent description of the linearization, as an operator of the form 
\begin{equation}
\label{lin.rel}
\begin{split}
&D_f: \Gamma (f^* \T)^\R \oplus  T_C\ov\M^\R_{\chi, \ell} \ra \La^{01}(f^* \T)^\R
\\ 
&D_f (\xi, h)=\del_{f^*(\T, c_\T)} \xi + A_f(\xi)+ b_f(h)
\end{split}
\end{equation}
cf. \eqref{lin.f.rel.A}. Here $\T=\T_\Si$ is the relative tangent bundle  \eqref{T.punctured} of the marked curve $\Si$. The first term of $D_f$ is the pullback of the $\del_{(\T, c_\T)}$ operator on the target, while $A_f$ and $b_f$ are 0'th order terms. By construction, the image of both $A_f$ and $b_f$ are (smooth) $(0,1)$-forms supported away from a neighborhood of the special points of the domain. Operators of the form \eqref{lin.rel} are completed to Fredholm operators as in \eqref{Real.CR}-\eqref{ker.D0}; we refer the reader to Lemma~\ref{L.all.lin.sameA} and rest of \S\ref{S.lin.smooth} for the identification between the two linearizations 
-- $L_f$ in \eqref{lin.f.2} and $D_f$ in \eqref{lin.rel} -- as well as between the kernels and cokernels of their Fredholm completions. 
\medskip

Up to 0'th order terms, the operator $D_f$ in \eqref{lin.rel} is the same as  $\del_{f^*(\T, c_\T)}\oplus 0_W$, where $W= T_C\ov\M^\R_{\chi, \ell}$ parametrizes the variations in the domain of $f$ and $0_W:W\ra 0$. 
In particular, the virtual dimension of the moduli space is the index of $D_f$, which is equal to \eqref{dim.M=b}. 
Moreover, since the virtual relative orientation sheaf $\mo_{\fM}$ of the family \eqref{rel.moduli.jv} is the determinant line bundle of the family of linearizations $D_f$, we get canonical  (up to homotopy) identifications 
	\bear\label{or.sheaf.top}
	\mo_{\fM}\cong \det D \cong \det (\del_{(\T, c_\T)}\oplus 0) = \det \del_{(\T, c_\T)} \otimes \mathfrak{f}^* \det T \oM^{\R}_{\chi, \ell(\vec \mu)}, 
	\eear
cf. \cite[(A.13)]{GI}. Here $ \det \del_{(\T, c_\T)}$ denotes the determinant of the family of pullback operators $\del_{f^*(\T, c_\T)}$ as $f$ varies in $\fM$, and 
\best
\mathfrak f: \ov \fM ^{\R}_{d, \chi, \vec \mu}(\Si, V) \ra \oM^{\R}_{\chi, \ell(\vec \mu)}
\eest 
denotes the forgetful morphism to the real Deligne-Mumford moduli space parametrizing the domains. 
\medskip

The following theorem defines the VFC of the real relative moduli spaces. 

\begin{theorem}\label{T.VFC.rel} The family of real relative moduli spaces \eqref{rel.moduli.jv} is a thinly compactified family. It therefore carries a VFC 
	\bear\label{VFC.S.V}
	[ \ov\fM^\R_{d, \chi, \vec \mu}(\Si, V)_\nu ]^\vir \in\cHH_b  (\ov\fM^\R_{d, \chi, \vec \mu}(\Si, V)_\nu; \Q)  
	\eear
	for all $\nu\in \JV^\R(\Si, V)$, including $\nu=0$, with $b$ given by \eqref{dim.M=b}. The VFC depends on the choice of (twisted) orientation data on the relative tangent bundle $\T$ of $\Si$ as described in \S\ref{S.orient}. 
\end{theorem}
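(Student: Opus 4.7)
The plan is to verify the two conditions defining a thinly compactified family as reviewed at the start of Section~\ref{S.VFC.mod}, and then invoke \cite[Thm~4.2]{ip-thin} to produce the VFC \eqref{VFC.S.V}. First, I would apply Corollary~\ref{C.baire} to fix a Baire subset $\cP^*\subset \JV^\R(\Si, V)$ over which every stratum of \eqref{rel.moduli.jv} is cut transversally, so each stratum is a smooth manifold of the expected dimension; the dimension $b$ given by \eqref{dim.M=b} is read off from the index of \eqref{lin.rel} using the isotopy to $\del_{(\T,c_\T)}\oplus 0$ described before \eqref{or.sheaf.top}.

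For $\nu\in \cP^*$, I would stratify the fiber $\ov\fM^\R_{d,\chi,\vec\mu}(\Si,V)_\nu$ according to the topological type of the maps. The top stratum consists of maps $f:C\to \Si$ with smooth domain; the virtual codimension~$1$ stratum consists of maps with a single real node of the domain (mapped to a non-special point of $\Si$); all remaining configurations (a node lying over $V$, a conjugate pair of nodes, more than one node, a rubber component, or an unstable domain component) have virtual codimension at least~$2$. This codimension accounting follows from the standard index comparison of the gluing parameters together with the requirement \eqref{ker.nor.not.0} for rubber components, which produces the extra real codimension. I would then establish that the subset $\wt \fM(\Si,V)_\nu$ obtained as the union of the top stratum and the real codimension~$1$ stratum is an oriented topological manifold: on the top stratum this is immediate from transversality together with the splitting \eqref{or.sheaf.top}, which, given a choice of twisted orientation data on $\T$, produces a canonical orientation (as recalled in \S\ref{S.orient}); and across the real nodal stratum this follows from standard gluing at an ordinary real node of the domain. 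The complement $\ov\fM_\nu \setminus \wt \fM(\Si,V)_\nu$ is covered by strata of (virtual) codimension at least~$2$, cut transversally, hence has Cech-homological codimension at least~$2$; this is the thinness condition for the fiber.

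For the cobordism condition over the parameter space, I would take a generic smooth path $\gamma\subset\cP$ with endpoints in $\cP^*$ and apply the family version of Lemma~\ref{L.transv.rel.F} to ensure that every stratum of $\ov\fM_\gamma$ is cut transversally. Each stratum then carries one additional real dimension; the same real-node gluing together with orientation compatibility of \eqref{or.sheaf.top} along $\gamma$ upgrades the union of the top and real codimension~$1$-in-family strata to an oriented topological cobordism, with the remaining strata forming a subset of homological codimension at least~$2$ in $\ov\fM_\gamma$. Together the two properties verify the thinly compactified family hypotheses of \cite[Defn~3.1, 3.2]{ip-thin}, and \cite[Thm~4.2]{ip-thin} then produces a unique VFC \eqref{VFC.S.V} for every $\nu\in\JV^\R(\Si,V)$, including $\nu=0$, depending on the chosen twisted orientation data through \eqref{or.sheaf.top}.

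The main obstacle is the gluing analysis at a real node of the domain used to promote the stratified structure near the real codimension~$1$ stratum to a topological manifold (and, in the parametric version, to a topological cobordism). This must be carried out in weighted Sobolev norms adapted to the node, so that the zeroth order terms $A_f$ and $b_f$ in \eqref{lin.rel} remain compact perturbations and the linearized gluing map is a submersion onto the normal $\R$-factor transverse to the codimension~$1$ stratum. Once this is in place, the orientation compatibility across the nodal stratum reduces to identifying the sign of the gluing parameter with the induced orientation on $\det\del_{(\T,c_\T)}$ under normalization, which is routine from the index bundle description, so the remainder of the argument is bookkeeping.
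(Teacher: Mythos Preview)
Your proposal is correct and follows essentially the same approach as the paper: stratify the fiber, use transversality to see that the codimension~$\le 1$ strata form an oriented topological manifold after standard gluing at an ordinary real node, show the complement has homological codimension~$\ge 2$, repeat over generic paths, and invoke \cite[Lemma~5.4 and Thm~4.2]{ip-thin}. Two small citation slips: the Baire set of regular $\nu$ in $\JV^\R(\Si,V)$ comes from Lemma~\ref{L.transv.rel} plus Sard--Smale (Corollary~\ref{C.baire} is stated for $\JV^\R(\cal F_{/\De})$), and transversality over a generic path $\gamma$ likewise follows from Lemma~\ref{L.transv.rel}, not Lemma~\ref{L.transv.rel.F}, which concerns families of \emph{targets} rather than paths in the parameter space.
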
 
\begin{proof} Fix $d, \chi$ and $\vec \mu$. For simplicity, denote the family \eqref{rel.moduli.jv}  by $\ov \fM\ra \cP$ as in \eqref{famMP} and by $\ov \fM_\nu$ its fiber over $\nu\in \cP$. Consider the open subset 
	\bear\label{wt.M}
	\wt \fM\subseteq \ov\fM
	\eear
consisting of the union of strata of virtual codimension at most one. The elements of these strata are maps $f:C\ra \Si$ whose domains have at most 1 real node and this node must be an ordinary real node, i.e. its image is away from $V$; in particular these elements cannot have any rubber components and they are all maps into a fixed, smooth target $\Si$. All the other strata have virtual codimension at least 2 by the formula in Remark~\ref{R.codim.strata}. 

By Lemma~\ref{L.transv.rel}  and the Sard-Smale Theorem, for generic parameter $\nu \in \cP$, all the strata of the real relative moduli space are cut transversally; in particular, the cokernel of the linearization along the top stratum and along the codimension one stratum of $\wt \fM_\nu$ vanishes. Furthermore all the maps in $\wt \fM_\nu$ have the same fixed, smooth target $X=\Si$ and at most one node, which is an ordinary node. Therefore the proof of the usual gluing theorem in \cite[\S B]{pardon} or  \cite{li-tian}  applies verbatim to the moduli space $\wt \fM_\nu$ after making the following inconsequential changes: 
\begin{enumerate}[(a)]
\item working throughout with symmetric choices (of metrics, bump functions etc) as in  \cite{gz}.
\item the linearization is completed in norms which are weighted around the ordinary node 
as in \cite[\S B.4]{pardon}, and have different weights around  the contact points. But these contact points are marked points of the domain so they stay {\em away} from the node where the gluing is performed.
\end{enumerate}
Therefore as in the usual gluing theorem, the generic fiber $\wt \fM_\nu$ of \eqref{wt.M} is a $b$-dimensional topological manifold (without boundary), locally modeled on the kernel of the linearization,  cf. \cite[Theorem B.1.1]{pardon}. Moreover, the same arguments show that the determinant of the family of linearizations is locally trivial (thus forms a line bundle) as $f:C\ra \Si$ varies in $\wt \fM$, exactly as in \cite[(3.3)]{gz} but with $E=TX$ replaced in our case by $E=\T_\Si$. 

Let $\det D$ be the determinant line bundle of the family of linearizations $D_f$ as $f$ varies in $\wt \fM$, cf. \eqref{lin.rel}. As in \cite{gz}, there is a canonical homotopy class of isomorphisms
\bear\label{det.D.T}
\det D \cong \det (\del_{(\T, c_\T)} \oplus 0_W)= \det \del_{(\T, c_\T)} \otimes \mathfrak{f}^* \det T \oM^{\R}_{\chi, \ell(\vec \mu)}
\eear
as $f:C \ra \Si$ varies over $\wt \fM$, even as the domains become nodal. This identifies the virtual  (relative) orientation sheaf $\mo_{\wt{\fM}}$ of $\wt \fM$ with 
 	\bear\label{or.sheaf}
	\mo_{\wt{\fM}} \cong \det D\cong \det \del_{(\T, c_\T)} \otimes \mathfrak{f}^* \det T \oM^{\R}_{\chi, \ell(\vec \mu)}.
	\eear
In \S\ref{S.orient} below we recall how a choice of (twisted) orientation data trivializes \eqref{or.sheaf}. Note that a trivialization of $\det D$ orients the generic fibers $\wt \fM_\nu$ of the family $\wt \fM \ra \cal P$.

\smallskip

Consider next the ``singular locus"
	\bear\label{wt.S}
	\cal S=\ov\fM \setminus \wt \fM.
	\eear
By Lemma~\ref{L.transv.rel} and the Sard-Smale theorem, for generic parameter $\nu\in \cP$, its fiber $\cal S_\nu=\ov\fM_\nu\setminus \wt\fM_\nu$ is stratified by smooth manifolds of the expected dimension, which is at most $\dim \wt \fM_\nu-2$ as noted above. Then by \cite[Lemma~2.2]{ip-thin} the singular locus $\cal S_\nu$ is homologically codimension 2. Thus for generic $\nu$,  $\ov\fM_\nu$ is a thin compactification of the oriented manifold $\wt \fM_\nu$ and therefore carries a fundamental class $[\ov\fM_\nu]$. 
	\smallskip
	
Similarly, the moduli space $\ov\fM_\gamma$ over a generic path $\gamma$ is a thinly compactified cobordism in the sense of \cite[\S2.4]{ip-thin}. This means that the corresponding subset $\wt\fM_\gamma$ is an oriented topological manifold with boundary $\wt \fM_{\bd \gamma}$, and its complement in $\ov\fM_\gamma$ is stratified by codimension 2 manifolds, as is the complement of $\wt \fM_{\bd \gamma}$ in $\ov \fM_{\bd \gamma}$. 
	
\smallskip
Therefore by \cite[Lemma~5.4]{ip-thin}, $\ov\fM\ra \cP$ is a thinly compactified family in the sense of \cite[Definition~3.2]{ip-thin}. Consequently, by  \cite[Theorem~4.2]{ip-thin}, it carries a VFC as in \eqref{VFC.gen} for all $\nu\in \cP$, including $\nu=0$.  The VFC is defined by $[\ov\fM_\nu]$ for generic parameter $\nu$, and  extended uniquely to all parameters using the continuity property of rational Cech holomogy. 
\end{proof} 
\begin{rem} 
In particular, the VFC \eqref{VFC.M} of the (unperturbed) real relative moduli space $\ov\M^\R_{d, \chi, \vec \mu}(\Si, V)$ can be defined by turning on RT perturbations $\nu$ in $\JV^\R(\Si, V)$ to obtain transversality strata-wise and then turning them off by taking $\nu\ra 0$. 
\end{rem}

 We next show that the proof of Theorem~\ref{T.VFC.rel} extends to the moduli spaces 
	\bear\label{fam.moduli.consider}
	\ov \fM(\Si_{s\ne 0}), \quad \ov \fM(\wt \Si)=\ma \sqcup_{\la\vdash d}  \ov \fM_\la (\wt \Si),\quad \mbox{ and } \quad   \ov \fM(\Si_{0})
	\eear
described in the paragraph containing \eqref{mo.over}, for fixed $d, \chi, \vec \mu$ as in \S\ref{S.transv.fam}. As in \S\ref{S.transv.fam}, we continue to assume that the nodal curve $\Si_0$ has no real special points, and no rational connected components without any marked points.    Note that  \eqref{fam.moduli.consider} are families of real relative moduli spaces over the 
parameter space $\cP=\JV^\R(\cal F_{/\De})$. 
\begin{theorem}\label{T.VFC.rel.2} Assume $\Si_0$ has no rational connected components without any marked points. Then the moduli spaces \eqref{fam.moduli.consider} are thinly compactified families over $\cP=\JV^\R(\cal F_{/\De})$. In particular, each one carries a VFC 
	\best
	[ \ov\fM(- )_{\nu}]^\vir \in\cHH_b  ( \ov\fM(-)_{\nu};\; \Q) 
	\eest
for all $\nu\in \cP$. The VFC depends on the choice of (twisted) orientation data $\mo_\F$ on the relative tangent bundle $\T \ra \F$ to the family $\F$; the virtual dimension $b$ is given by \eqref{dim.M=b}. 
\end{theorem}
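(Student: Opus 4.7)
The plan is to reduce Theorem~\ref{T.VFC.rel.2} to three parallel applications of the argument used for Theorem~\ref{T.VFC.rel}, with the transversality input upgraded from Lemma~\ref{L.transv.rel} to its family version (Lemma~\ref{L.transv.rel.F} and Corollary~\ref{C.baire}). The pullback maps \eqref{nu.pullsback} convert a generic $\nu\in\cP=\JV^\R(\F_{/\De})$ into compatible RT perturbations on the individual targets $\Si_s$ and $\wt\Si$, and orientation data on these targets is produced by pulling back the fixed $\mo_\F$ under the maps in \eqref{si.to.F}, with compatibility over the nodal fiber provided by the normalization sequence \eqref{T.SES}.

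For each family $\ov\fM(-)$ in \eqref{fam.moduli.consider} I would first identify the open subset $\wt\fM(-)\subseteq \ov\fM(-)$ formed by the strata of virtual codimension at most one, and then verify that its complement is stratified by manifolds of virtual codimension at least two. For $\ov\fM(\Si_{s_0})$ and $\ov\fM(\wt\Si)$ this is identical to the smooth-target analysis in the proof of Theorem~\ref{T.VFC.rel}: only a single ordinary real node of the domain (mapping away from the divisor) appears in codimension one. For $\ov\fM(\Si_0)$ the target itself is nodal, so in addition to ordinary real nodes one must consider rubber components over the conjugate pair of nodes of $\Si_0$ and nodes of the domain matching the nodes of the target; these contribute at least codimension two by the same dimension count used for rubber over marked points in \S\ref{S.rel.moduli}, combined with the matching condition at the two local branches.

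With the stratification in hand, Corollary~\ref{C.baire} guarantees that for generic $\nu\in\cP$ every stratum of every fiber $\ov\fM(-)_\nu$ is cut transversally of the expected dimension. Standard gluing at an ordinary real node then makes $\wt\fM(-)_\nu$ a topological manifold, oriented by the formula \eqref{or.sheaf} applied with the relative tangent bundle of the appropriate target. The complement $\cS_\nu$ is a union of smooth manifolds of codimension at least two, so has homological codimension two by \cite[Lemma~2.2]{ip-thin}; running the same analysis over a generic path $\gamma\subset\cP$ upgrades $\wt\fM(-)_\gamma$ to an oriented topological cobordism with codimension two complement. By \cite[Lemma~5.4]{ip-thin} each family in \eqref{fam.moduli.consider} is then a thinly compactified family in the sense of \cite[Definition~3.2]{ip-thin}, and \cite[Theorem~4.2]{ip-thin} supplies the claimed VFC of virtual dimension \eqref{dim.M=b} for every $\nu\in\cP$.

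The main technical obstacle I anticipate is the codimension count for $\ov\fM(\Si_0)$, namely verifying that every stratum involving rubber components over the conjugate pair of nodes of $\Si_0$ has virtual codimension at least two. Such rubber does not arise for the smooth targets treated by Theorem~\ref{T.VFC.rel}, so this step is genuinely new. The key point is that a rubber component over a node must satisfy $\ker D^N\ne 0$ along its projection and is further constrained by a matching condition on its two local branches; combined with the standard rubber codimension estimate this pushes each such stratum into virtual codimension at least two, as needed for the thin compactification argument to go through.
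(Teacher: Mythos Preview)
Your proposal is correct and follows essentially the same route as the paper: reduce to the argument of Theorem~\ref{T.VFC.rel} with transversality supplied by Lemma~\ref{L.transv.rel.F}/Corollary~\ref{C.baire}, identify the codimension~$\le 1$ strata $\wt\fM(-)$, note that all rubber strata (including those over the nodes of $\Si_0$) have codimension~$\ge 2$, and invoke \cite{ip-thin}. The one point the paper handles more carefully than your sketch is the orientability of $\wt\fM(\Si_0)$: for the nodal target the orientation sheaf is the variant \eqref{or.sheaf.node} rather than \eqref{or.sheaf} directly, and its trivialization is deferred to the separate analysis in \S\ref{S.mod.nodal} (via the attaching map $\Phi$ and the normalization description), which is precisely where your invocation of \eqref{T.SES} would do its work.
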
 
\begin{proof} The strata of the moduli spaces \eqref{fam.moduli.consider} are cut transversally by Lemma~\ref{L.transv.rel.F} and every stratum with at least one rubber component has codimension at least 2, cf. Remark~\ref{R.codim.strata}.  Denote by 
 \bear\label{fam.moduli.consider.td}
	\wt \fM (\Si_{s\ne 0}), \quad \wt \fM(\wt \Si)=\ma\sqcup_\la  \wt \fM_\la (\wt \Si),  \quad \mbox{ and } \quad   \wt \fM(\Si_0)
\eear
the union of their codimension at most 1 strata. Note that besides the nodes in the preimage of the nodes of the target, the domains of maps in $\wt \fM(\Si_0)$ can have at most one additional node, which must be a real ordinary node.  Moreover, these maps have no rubber components. 

Recall that by definition the fibers of the families $\ov \fM (\Si_{s})$ and $ \ov \fM_\la (\wt \Si)$ at $\nu\in \cP$ are the real relative moduli spaces 
$\ov\fM^{\R}_{d, \chi, \vec \mu}(\Si_s, V_s)_\nu$ and respectively $\ov\fM^{\R}_{d, \chi+4\ell(\la), \vec \mu, \la, \la}(\wt \Si, \wt V)_\nu$, and that $\Si_s$ and 
$\wt \Si$ are smooth for $s\ne 0$. These moduli spaces have the same virtual dimension
\best
b=d \chi(\Si_{s}\setminus V_{s})-\chi+2\ell(\vec \mu)= d \chi(\wt \Si\setminus \wt V) - (\chi+ 4\ell(\la)) + 2\ell(\vec \mu)+4\ell (\la)
\eest
cf. \eqref{dim.M=b}; see also  \eqref{A.v.dim=b}. Moreover, the relative orientation sheaf of the family $\wt \fM (\Si_{s\ne 0})$ and respectively $ \wt \fM_\la(\wt \Si)$  over the parameter space $\cP$ continues to be given by the corresponding formula \eqref{or.sheaf} and is similarly orientable, as reviewed in \S\ref{S.orient} below. 

\textsc{Case 1.} When $\Si=\Si_{s\ne 0}$ or $\wt \Si$, the rest of the proof of Theorem~\ref{T.VFC.rel} applies verbatim to 
$\ov\fM (\Si)\ra \cal P$. 

\smallskip
 \textsc{Case 2.} When $\Si=\Si_0$, recall that every stratum of $\ov \fM(\Si_0)$ can be described by the same local defining equations as those of a stratum of $\ov\fM (\wt \Si)$, cf. Remark~\ref{R.m.0.same}. In particular, since for generic $\nu$ every stratum of $\ov\fM (\wt \Si)_\nu$ is a manifold, then so is every stratum of $\ov \fM(\Si_0)_\nu$; the same is true over a generic path.

Consider next $\wt \fM(\Si_0)$, the subset consisting of maps $f_0:C_0\ra \Si_0$ whose domain has at most one {\em ordinary} node, in addition to the contact nodes (recall that ordinary nodes are mapped away from the special points of $\Si_0$, while contact nodes are the preimage of the nodes of $\Si_0$). At the beginning of \S\ref{S.deg.phi} below we show that a finite (global) cover of $\wt \fM(\Si_0)$ (obtained by ordering the contact nodes) can be identified with $\wt \fM(\wt \Si)$. Since the generic fiber of $\wt \fM(\wt \Si)$ is a finite dimensional 
manifold, then so is the generic fiber of $\wt \fM(\Si_0)$; the same is true over a generic path.
 
 The relative orientation sheaf of $\wt \fM(\Si_0)$ is analyzed in \S\ref{S.mod.nodal} below, and is orientable as well, cf. paragraph above  Lemma~\ref{L.phi.orient}. The rest of the proof proceeds as in that of Theorem~\ref{T.VFC.rel}. 
\end{proof}

The same proof also provides the following specific result about the generic fibers of these families. Denote by $\cP^*$ the Baire subset of  $\cP=\JV^\R(\cal F_{/\De})$ appearing in Corollary~\ref{C.baire}.
\begin{prop}\label{P.or.mfl}  Assume $\Si_0$ has no rational connected components without any marked points. Then for all $\nu \in \cP^*$, we have the following properties:
\begin{enumerate}[(i)] 
\item the spaces 
\bear\label{mod.or.consider}
\ma\cup_{s\in (0, s_0)} \wt \fM(\Si_s)_\nu, \quad \wt \fM(\Si_{s_0})_\nu, \quad \wt \fM(\Si_0)_\nu, \quad \mbox{ and } \quad \wt \fM(\wt \Si)_\nu 
\eear
are orientable topological manifolds;
\item $\ov\fM(\Si)_\nu$ is a thin compactification of $\wt \fM(\Si)_\nu$ for $\Si$ equal to $\Si_{s_0}$, $\Si_0$, and respectively $\wt \Si$;
\item $\ma\cup_{s\in (0, s_0]} \wt \fM(\Si_s)_\nu$ is a topological manifold with boundary $\wt \fM(\Si_{s_0})_\nu$; 
\item the complement of $\ma \cup_{s\in  [0, s_0]}  \wt \fM(\Si_s)_\nu$ in  $\ma \cup_{s\in  [0, s_0]} \ov \fM(\Si_s)_\nu$ is homologically codimension 2. 
\end{enumerate}
\end{prop}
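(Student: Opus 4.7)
The plan is to deduce all four statements directly from the strata-wise transversality of Corollary~\ref{C.baire} together with standard gluing at ordinary real nodes, in close parallel to the proof of Theorem~\ref{T.VFC.rel}. The underlying observation is that by definition $\wt\fM$ collects the strata of virtual codimension at most one, and in each of the four cases these strata consist only of maps whose domains are either smooth or have exactly one additional \emph{ordinary} real node (i.e.\ a real node whose image avoids the marked divisor and, for $\Si_0$, the nodes of $\Si_0$). In particular no rubber components appear in any fiber of $\wt\fM$.

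For part (i), fix $\nu\in\cP^*$. By Corollary~\ref{C.baire} the codimension-one stratum of each space in \eqref{mod.or.consider} is cut transversally, so the cokernel of the linearization vanishes there. The standard gluing theorem at an ordinary real node then yields local coordinate charts identifying a neighborhood of each nodal map with an open set in the kernel of its linearization; these charts fit together with the ones on the top stratum to make the space a topological manifold of the expected dimension. Orientability across a codimension-one real nodal stratum is established as in \cite{gz}, and a choice of twisted orientation data $\mo_\F$ on $\F$ pulls back (via \eqref{si.to.F}) to orient each of the spaces through \eqref{or.sheaf}. For $\wt\fM(\Si_0)_\nu$ the nodes of the domain lying over the nodes of $\Si_0$ are fixed, and the manifold/orientation analysis is exactly the one carried out in \S\ref{S.mod.nodal} (cf.\ the paragraph preceding Lemma~\ref{L.phi.orient}). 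For the family $\cup_{s\in(0,s_0)}\wt\fM(\Si_s)_\nu$ the parameter $s$ contributes an extra real direction, raising the dimension by one.

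Part (iii) follows from the same gluing analysis applied to the half-closed interval $(0,s_0]$: at $s=s_0$ the target is smooth and no smoothing parameter is available, so adjoining $s=s_0$ produces an honest boundary equal to $\wt\fM(\Si_{s_0})_\nu$. For parts (ii) and (iv), the complement of $\wt\fM$ is covered by strata involving either two or more extra nodes, at least one rubber component, or a coincidence of contact points; by Corollary~\ref{C.baire} each such stratum is a smooth manifold of codimension at least two, and by \cite[Lemma~2.2]{ip-thin} the countable union is homologically codimension two. This gives both the thin compactification statement (ii) and the codimension statement (iv).

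The main delicate point is the gluing for $\wt\fM(\Si_0)_\nu$, where one must combine the fixed nodes coming from $\Si_0$ with the at-most-one floating ordinary real node and verify that the resulting local model remains a topological manifold with a coherent orientation. This is precisely what \S\ref{S.mod.nodal} sets up (the two perspectives on the linearization at a nodal target and the identification of their orientation sheaves), so Proposition~\ref{P.or.mfl} will invoke those results rather than reprove them; the remainder of the argument is a direct transcription of the transversality-plus-gluing scheme from the proof of Theorem~\ref{T.VFC.rel}.
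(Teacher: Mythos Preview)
Your proposal is correct and matches the paper's approach: the paper does not give a standalone proof of Proposition~\ref{P.or.mfl} but simply notes that it is a direct consequence of the argument for Theorems~\ref{T.VFC.rel} and~\ref{T.VFC.rel.2}, which is exactly the transversality-plus-gluing scheme (via Corollary~\ref{C.baire}, gluing at an ordinary real node, orientability from \cite{gz}, the analysis of $\Si_0$ in \S\ref{S.mod.nodal}, and \cite[Lemma~2.2]{ip-thin}) that you have spelled out.
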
 
In general, $\ma\cup_{s\in [0, s_0]} \wt \fM(\Si_s)_\nu$ is branched over $s=0$, but a cover of it is a topological cobordism cf.  \S\ref{S.cobord}. 
\medskip

{\em A priori}, the VFC as defined in \cite{ip-thin} may depend on the parameter space of perturbations used to achieve transversality. Using the results of 
\cite[\S6]{ip-thin} we end this section by comparing the VFCs of the (unperturbed) real moduli spaces $\ov \M^\R(\Si, V)$ constructed using perturbations defined on $\Si=\Si_{s\ne 0}$ or $\wt \Si$ with those constructed using perturbations defined on the corresponding family $\F$. 
\begin{lemma}\label{L.VFCs.match} For the first two families in \eqref{fam.moduli.consider}, the VFC associated by Theorem~\ref{T.VFC.rel.2} agrees 
under \eqref{nu.pullsback} with the VFC associated by Theorem~\ref{T.VFC.rel}. 
\end{lemma}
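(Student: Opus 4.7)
The plan is to invoke the invariance of the VFC under change of parameter space developed in \cite[\S6]{ip-thin}. The essential observation, built into the definition of $\cP = \JV^\R(\F_{/\De})$ via the condition $\pi_*\nu = 0$, is that for $\Si$ equal to $\Si_s$ ($s\ne 0$) or $\wt \Si$, any $(J,\nu)$-holomorphic map into $\F$ factoring through $\Si$ is the same as a $(J,\nu|_\Si)$-holomorphic map into $\Si$. Consequently, under \eqref{nu.pullsback} the moduli space fibers coincide as stratified topological spaces, $\ov\fM(\Si)_\nu = \ov\fM^\R(\Si, V)_{\nu|_\Si}$, and the linearizations and orientation sheaves match since both reduce to \eqref{or.sheaf} under a single compatible choice of twisted orientation data. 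The content of the lemma is therefore that the two thin-compactification procedures yielding VFCs on this common space, one with parameters in $\cP$ and the other with parameters in $\cP' = \JV^\R(\Si, V)$, produce the same class in $\cHH_b(-;\Q)$.

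The first step is to check that the pullback map $\pi_\Si: \cP \to \cP'$, $\nu \mapsto \nu|_\Si$ (or $\nu \mapsto \wt\nu$), is continuous and essentially surjective. Given any $\nu' \in \cP'$, one constructs an extension in $\cP$ by transporting $\nu'$ along normal directions in a tubular neighborhood of the smooth fiber via the local triviality of $\pi: \F \to \De$ near $\Si$, and then multiplying by a cutoff function in the base. Because the compatibility conditions \eqref{nu.real}, \eqref{nu.rel} and $\pi_*\nu = 0$ defining $\cP$ are all local on $\F$ and pointwise in character, the extension belongs to $\cP$ and restricts to $\nu'$ on $\Si$.

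The second step is to compare the thin-compactification structures. For $\nu$ in the Baire subset $\cP^* \subset \cP$ of Corollary~\ref{C.baire}, all strata of $\ov\fM(\Si)_\nu$ are cut transversally; but this is the same moduli space as $\ov\fM^\R(\Si, V)_{\pi_\Si(\nu)}$, so $\pi_\Si(\nu)$ realizes the transversality of Lemma~\ref{L.transv.rel} as well. Thus the fundamental class of the codimension at most one open stratum, together with its unique extension to the thin compactification, agrees under both interpretations. Along any generic path from $\nu \in \cP^*$ to the origin in $\cP$, the image $\pi_\Si \circ \gamma$ is a path in $\cP'$ over which the family of moduli spaces is literally identical; hence both families are thinly compactified cobordisms with the same fundamental class at the generic endpoint. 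By the limiting definition of the VFC in \cite[Thm~4.2]{ip-thin}, both procedures produce the same class in $\cHH_b(\ov\fM(\Si)_0;\Q)$.

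The main obstacle is the extension argument in the first step, in particular ensuring that a perturbation on $\Si$ can be extended to $\F$ while simultaneously preserving the three conditions defining $\cP$; however, since $\F$ is a product near any smooth fiber and the constraints are local, this is a routine partition of unity construction. Combined with the matching of orientation sheaves from \eqref{or.sheaf}, this yields the identification of the two VFCs.
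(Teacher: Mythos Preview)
Your proposal is correct and follows essentially the same route as the paper: both invoke the change-of-parameter-space machinery of \cite[\S6]{ip-thin}, and both rest on the observation that the fiber $\ov\fM(\Si)_\nu$ over $\nu\in\cP$ literally coincides with the fiber $\ov\fM^\R(\Si,V)_{\nu|_\Si}$ over its image in $\cP'$, with identical linearizations and hence identical transversality loci. The paper's proof is more terse---it simply cites \cite[Lemma~6.2]{ip-thin} and verifies \cite[Definition~6.1]{ip-thin} by the fiber-and-linearization identification---whereas you unpack the mechanism (surjectivity of restriction, explicit path argument for the limiting VFC); your extension construction in the first step is a natural ingredient and your path argument recapitulates what the cited lemma packages.
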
 
\begin{proof} Assume for simplicity $\Si$ is equal to $\Si_{s\ne 0}$ (the proof for $\wt \Si$ is similar). Denote by $\cP \ra \cP'$ the map $\JV^\R(\cal F_{/\De}) \ra \JV^\R(\Si_s, V_s)$ 
defined as in \eqref{nu.pullsback}. We can then consider the family of moduli spaces 
\best
\ma\cup_\nu\ov\fM^\R_{d, \chi, \vec \mu}(\Si_s, V_s)_\nu
\eest
as $\nu$ varies over either $\cP$ or $\cP'$; denote these two families by $\ov\fM$ and respectively $\ov \fM'$. By \cite[Lemma~6.2]{ip-thin} it suffices to check that the conditions in \cite[Definition~6.1]{ip-thin} are satisfied. But the fiber $\ov\fM_\nu$ of $\ov\fM\ra \cP$ is the same as the fiber of $\ov \fM'\ra \cP'$ over the image of $\nu$ in $\cP'$; moreover the former fiber is cut transverally strata-wise if and only if the latter one is (because the linearization is the same for both problems). Therefore the Baire subsets of regular parameters (where the strata are cut transversally) satisfy the required conditions of \cite[Definition~6.1]{ip-thin}, completing the proof.
\end{proof} 
\medskip
%%%%%%%%%%%%%%%
%%%%%%%%%%%%%%%
%%%%%%%%%%%%%%%%

\section{Moduli spaces for nodal targets}\label{S.mod.nodal} 
\medskip

In this section we describe in more detail the family of real relative moduli spaces associated to the nodal curve $\Si_0$ and analyze its orientation sheaf. 
Recall that $\Si_0$ is a nodal curve with a single pair $x^\pm$ of conjugate nodes and $r$ pairs of conjugate marked points, and $\wt \Si$ is its normalization \eqref{nor.Si.0}. We fix the topological data $d,\chi, \vec \mu$ and consider the moduli spaces $\ov \fM(\Si_0)$ and $\ov \fM(\wt \Si)$  introduced in 
\eqref{mo.over}-\eqref{mod.to.consider}. We continue working over the parameter space $\cP=\JV^\R(\cal F_{/\De})$ of RT perturbations on the family $\cal F$. Then the attaching map \eqref{map.attach.0} extends to give a map
\bear\label{attach.P}
\Phi: \ov\fM(\wt \Si)\ra \ov\fM(\Si_0) 
\eear
of moduli spaces over the parameter space $\cP$. Its restriction $\wt \fM(\wt \Si)\ra \wt \fM(\Si_0)$ to the codimension at most 1 strata \eqref{fam.moduli.consider.td} can be described as follows. Consider the cover of 
$\wt \fM(\Si_0)$ obtained by ordering the nodes in the preimage of the node $x^+$ of $\Si_0$. It comes with a group action reordering those nodes that have the same contact multiplicity. This cover is in fact homeomorphic to $\wt \fM(\wt \Si)$ and $\wt\fM(\Si_0)$ is the quotient by the corresponding group action, as we next show.

\subsection{The degree of the attaching map}\label{S.deg.phi}  We start with a useful result about the attaching map  \eqref{attach.P}; to state it precisely we need to introduce some extra notation.
Recall that by definition $\ov\fM(\wt \Si)$ is a disjoint union of spaces $\ov\fM_\la(\wt \Si)$ indexed by $\la$, cf. \eqref{mo.over.fam}, thus so is its open subset $\wt\fM(\wt \Si) = \ma\sqcup_{\la\vdash d} \wt\fM_\la (\wt \Si)$.

Moreover the elements $f_0:C_0 \ra \Si_0$ in $\wt \fM( \Si_0)$ have nodal domains and the preimage ${\bf y}^+=f_0^{-1}(x^+)$ of the target node $x^+$  consists only of nodes with matching contact multiplicity. We can similarly decompose the moduli space 
\bear\label{wt.M.0}
\wt \fM(\Si_0)=\ma \sqcup_{\la\vdash d} \wt \fM_{\la}( \Si_0)
\eear 
as a disjoint union of (open and closed) subsets indexed by the contact order $\la$ at the nodes $\bf y^+$. Denote by $x^+_1$, $ x^+_2$ the two marked points of $\wt \Si$ that get attached to produce the node $x^+$ of $\Si_0$. Order the $\ell=\ell(\la)$ nodes $y_1^+, \dots,  y_{\ell}^+$ in ${\bf y}^+$ so that the contact multiplicity of $f_0$ at $y_i^+$ is equal to $\la_i$,  the $i$'th part of the partition $\la$. 

Then $C_0$ is a nodal curve with $\ell$ pairs of conjugate nodes ${\bf y}=\{y_1^\pm, \dots,  y_{\ell}^\pm\}$ in the preimage of the 
nodes of $\Si_0$ and at most one other node.  Denote by $\wt C$ the (partial) normalization of $C_0$ resolving {\em only} the nodes in $\bf y$, and by $\wt f:\wt C\ra \wt \Si$ the unique lift of $f_0$ which satisfies the following conditions:
\begin{enumerate}[(i)] 
\item the lift to $\wt C$ of the node $y_i^+$ consists of two marked points denoted $y_{i1}^+$ and $y_{i2}^+$. 
\item $\wt f(y_{i1}^+)=x_1^+$ for all $i=1, \dots, \ell$. 
\end{enumerate} 
These conditions imply that the preimage $\wt f^{-1}(x_2^+)$ consists of the marked points $y_{i2}^+$, for $i=1, \dots, \ell$ and the contact multiplicity of $\wt f$ at both $y_{i1}^+$ and $y_{i2}^+$ is equal to $\la_i$, exactly as is the case for an element 
of $\wt\fM_\la(\wt \Si)$. 

Ordering the contact nodes $\bf y^+$ as above therefore gives rise to a cover of $\wt \fM_{\la}( \Si_0)$, and this cover is  canonically identified with $\wt \fM_{\la}( \wt\Si)$; the elements of 
$\wt \fM_{\la}( \wt\Si)$ are precisely the lifts $\wt f: \wt C \ra \wt \Si$ of the elements $f_0:C_0\ra \Si_0$ of $\wt \fM_{\la}( \Si_0)$. Moreover, $\wt \fM_{\la}( \wt\Si)$ comes with two natural $\Aut (\la)$ actions permuting the marked points in the preimage of $x_1^+$ and 
respectively $x_2^+$ (and therefore also permuting the corresponding conjugate points); the restriction 
\bear\label{attach.phi.0.tilde}
\begin{tikzcd}
\wt \fM_{\la}(\wt \Si)\ar[r, "\Phi|"]& \wt \fM_{\la}(\Si_0). 	
\end{tikzcd}
\eear
of the attaching map is the quotient by the diagonal $\Aut(\la)$ action. 
\medskip

Both moduli spaces in \eqref{attach.phi.0.tilde} are families over the parameter space $\cP=\JV^\R(\cal F_{/\De})$. The first one is relatively orientable over 
$\cP$ and an orientation is determined by a choice of twisted orientation data $\wt \mo$ on $\wt \Si$, as reviewed in \S\ref{S.orient} below. Moreover, the group action above is orientation preserving, thus the quotient is oriented. In particular, the moduli space $\wt \fM(\Si_0)$ is (relatively) orientable over $\cP=\JV^\R(\cal F_{/\De})$.
\begin{lemma}\label{L.phi.orient} 
The attaching map  \eqref{attach.P} restricts to a proper map \eqref{attach.phi.0.tilde} of degree $|\Aut(\la)|$ with respect to the orientations described above. Therefore, if $\Si_0$ has no rational connected components without any marked points, then 
\bear\label{Phi.0.push} 
\Phi_*[ {\wt \fM}_{\la} (\wt \Si)_\nu ]=  |\Aut\; \la|\cdot [{\wt \fM}_{\la} ( \Si_0)_\nu] 
\eear
for generic $\nu$ (i.e. for all $\nu\in \cP^*$).
\end{lemma}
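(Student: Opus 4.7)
The plan is to identify $\Phi|$ as the quotient map by the diagonal $\Aut(\la)$-action on $\wt\fM_{\la}(\wt\Si)$ (as set up in the paragraph containing \eqref{attach.phi.0.tilde}) and to compute its degree by checking properness, generic freeness, and orientation preservation. Properness is inherited from that of \eqref{attach.P}: the preimage of $\wt\fM_\la(\Si_0)$ under $\Phi$ equals $\wt\fM_\la(\wt\Si)$, since a lift $\wt f$ has the same topological type of extra real nodes as $f_0$. A preimage of a fixed $f_0 \in \wt\fM_\la(\Si_0)$ is determined by an ordering of the $\ell(\la)$ nodes ${\bf y}^+ = f_0^{-1}(x^+)$ consistent with the contact profile $\la$, because the splitting of each $y_i^+$ into $(y_{i1}^+, y_{i2}^+)$ is forced by which local branch of the normalization maps to $x_1^+$ versus $x_2^+$. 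The number of such orderings is $\prod m_k! = |\Aut(\la)|$, exhibiting $\Phi|$ as the announced diagonal quotient with set-theoretic degree $|\Aut(\la)|$. For $\nu \in \cP^*$, Corollary~\ref{C.baire} ensures that maps in the top stratum have trivial automorphism group, so the $\Aut(\la)$-action is free on an open dense subset.

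The key remaining step is to verify that the diagonal $\Aut(\la)$-action preserves the orientation, which I would carry out using \eqref{or.sheaf.resolve}. The action fixes the underlying map $\wt f$ and its domain $\wt C$ (only the labels of marked points of $\wt C$ are permuted), so the pulled-back bundle $\wt f^*\T_{\wt\Si}$ and its Cauchy-Riemann operator $\del_{(\T_{\wt\Si}, c_\T)}$ are unchanged, and the action is trivial on $\det\del_{(\T_{\wt\Si}, c_\T)}$. On $\mathfrak{f}^*\det T\ov\M^\R$, a permutation $\sigma \in \Aut(\la)$ simultaneously permutes the $\ell(\la)$ conjugate pairs of marked points over $x_1^+$ and the $\ell(\la)$ pairs over $x_2^+$; each such permutation acts on the corresponding block of $T\ov\M^\R$, a direct sum of $\ell(\la)$ complex tangent lines coming from varying the pair positions, by permutation of complex lines. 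On the underlying real tangent space a permutation of $\ell(\la)$ complex lines has determinant $(\sign\sigma)^2 = +1$, so the action is orientation preserving.

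Combining these ingredients with Proposition~\ref{P.or.mfl}, which equips $\wt\fM_\la(\wt\Si)_\nu$ and $\wt\fM_\la(\Si_0)_\nu$ with oriented topological manifold structures for $\nu \in \cP^*$, the map $\Phi|$ becomes a proper, finite, orientation-preserving quotient between oriented topological manifolds by a generically free action of a group of order $|\Aut(\la)|$. Hence it has degree $|\Aut(\la)|$, and the identity \eqref{Phi.0.push} follows by standard degree theory for proper maps in rational \v{C}ech homology. The main nontrivial point is the orientation-preservation check; the rest is combinatorial bookkeeping or a formal consequence of the transversality already established.
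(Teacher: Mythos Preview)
Your proof is correct and takes the same approach as the paper: identify $\Phi|$ as the quotient by the diagonal $\Aut(\la)$-action and read off the degree from the group order together with orientation preservation. You supply more detail on the orientation check than the paper (which simply asserts it); one small quibble is that Corollary~\ref{C.baire} gives transversality rather than triviality of automorphisms, but this is harmless since the pushforward identity for a finite group quotient between oriented manifolds holds in rational homology regardless of generic freeness.
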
 
\begin{proof} Since the restriction of $\Phi$ is the quotient map by the $\Aut (\la)$ action, it is a proper map and all its fibers are finite; the generic fiber consists of $|\Aut\; \la|$ points. Moreover, for generic $\nu$, the fibers of both families of moduli spaces in \eqref{attach.phi.0.tilde} are topological manifolds by Proposition~\ref{P.or.mfl}. They are oriented by the procedure described above and the group action is orientation preserving. Therefore their fundamental classes in rational Cech homology are related by \eqref{Phi.0.push}, cf. \cite[\S1 and \S2.1]{ip-thin}. 
\end{proof}

\subsection{Linearizations and the orientation sheaf}\label{S.mod.nod.lin} This section provides several equivalent descriptions of the linearization to the moduli space $\wt \fM(\Si_0)$ and its orientation sheaf. The first one is induced via \eqref{attach.phi.0.tilde} from that of $\wt \fM(\wt \Si)$; another one naturally extends over the family of moduli spaces $\cup_s \wt \fM(\Si_s)$. For the 
proof of the splitting formula we must keep track of the identifications between these two perspectives, as they affect the induced orientations and therefore the signs of the  coefficients in \eqref{split.VFC}. 
\medskip

Recall that the elements of $\wt \fM(\Si_0)$ are maps $f:C_0 \ra \Si_0\subset \F$ without any rubber components, and that the total space $X=\F$ of the family \eqref{family.smoothings} is smooth. When 
\bear\label{5.E.Real.bd}
(E, c_E)\ra (\F, c_\F)
\eear 
is a Real bundle, and $W$ is a finite dimensional real vector bundle over $\wt \fM(\Si_0)$, then as $f$ varies in $\wt \fM(\Si_0)$ we get a family of (pullback) operators $\del_{f^*(E, c_E)}\oplus 0_{W_{f}}$. The families of linearizations to $\wt \fM(\Si_0)$ described in this section are of this form up to 0'th order terms. In that case, the 
results of \cite{gz}, applied to families of real maps to the smooth target $(\F, c_\F)$, imply that the determinant of such family of operators is identified 
with 
\bear\label{det.W}
\det (\del_{(E, c_E)}\oplus 0_{W})=  \det \del_{(E, c_E)} \otimes \det W 
\eear
(canonically up to homotopy).

The linearization at $f_0:C_0 \ra \Si_0$ to the moduli space $\wt \fM(\Si_0)$ is induced by the linearization 
 \bear\label{D.wt.f}
 D_{\wt f}: \Gamma(\wt f^* \T_{\wt \Si})^\R  \oplus T_{\wt C} \ov\M^\R \ra 
\La^{01}(\wt f^* \T_{\wt \Si})^\R 
\eear
at a lift $\wt f: \wt C \ra \wt \Si$ to the (partial) normalizations cf. \S\ref{S.deg.phi}. Because $\wt \Si$ is smooth, $D_{\wt f}$ is given by the formula \eqref{lin.rel} with $f$ replaced by $\wt f$. In particular, as in \eqref{or.sheaf}, this identifies the orientation sheaf of $\wt \fM(\wt \Si)$ with 
\bear\label{or.sheaf.resolve}
\mo_{\wt{\fM}(\wt \Si)}\cong \det \del_{(\T_{\wt \Si}, c_\T)}\otimes \mathfrak f^* \det  T \ov \M^\R
\eear
(uniquely up to homotopy). Above $\T_{\wt \Si}$ is the relative tangent bundle of $\wt \Si$, while $T \ov \M^\R$ denotes the tangent bundle of the Real Deligne-Mumford moduli space parametrizing the variations in $\wt C$ (We assume for simplicity $\wt C$ is stable).
\smallskip

\begin{rem}\label{R.pull.norm} For any complex bundle $E\ra C_0$, denote by $\wt E\ra \wt C$ its lift to the normalization $\wt C$ of $C_0$. Recall that $\Gamma(C_0; E)\subset\Gamma(\wt C; \wt E)$ is the subspace of sections that match at the nodes of $C_0$ while $\La^{01}(C_0; E):=\La^{01}(\wt C; \wt E)$. Moreover, note that the pullback $\wt E$ of $E=f^*_0 \T_{\Si_0}$ to the normalization  $\wt C$ is equal to $\wt f^* \T_{\wt \Si}$.
\end{rem}
Using \eqref{T.SES}, together with the normalization short exact sequence  \eqref{split.gamma} and \eqref{split.gamma.chose} for the bundle $E=f^*_0 \T_{\Si_0}$ allows us to rewrite the linearization \eqref{D.wt.f} as
\bear\label{lin.nodal.push}
\wt D_{f_0}: \Gamma(f_0^* \T_{\Si_0})^\R \oplus  (f_0^*\T_{\Si_0})_{|{\bf y}^+}  \oplus T_{\wt C} \ov \M^\R \ra 
\La^{01}(f^*_0 \T_{\Si_0})^\R.
\eear
Here an element $\zeta \in \Gamma(f_0^* \T_{\Si_0})^\R$ gives rise to a variation in $f_0$ with fixed domain, target, and also fixed product of the leading coefficients \eqref{lead.coef.f} at each one of the contact nodes, while the  middle term records the variation in the product of leading coefficients, cf. \S\ref{S.lin.nodal}. The operator \eqref{lin.nodal.push} has the form
\bear\label{lin.node.push}
\wt D_{f_0} (\zeta, \al, h)= \del_{ f_0^*(\T_{\Si_0}, c)} \zeta+ A_{f_0}(\zeta)+\gamma_{f_0}(\al)+ b_{f_0}(h). 
\eear
where $A_{f_0}$ and $b_{f_0}$ are as in \eqref{lin.rel}, while $\gamma_{f_0}$ is induced by  \eqref{split.gamma.chose} after a choice of splitting of \eqref{split.gamma} for 
$E= f_0^* \T_{\Si_0}$, as reviewed in \S\ref{S.lin.nodal}. Moreover, we can arrange that 
$\gamma_f$ be induced by pullback from a splitting of the normalization short exact sequence for the bundle 
$E=  \T_{\Si_0}$.  
\smallskip

As $f_0$ varies in $\wt\fM(\Si_0)$, the middle term in the domain of \eqref{lin.nodal.push} is intrinsically the pullback $\ev_{\bf y^+} ^*(\T_{|x^+})$  of $\T_{|x^+}$ under the evaluation map at the $\ell$ nodes ${\bf y}^+$ in the preimage of the node $x^+$, while the last term is $T _{C_0}\ov \N_\ell^\R$.  Here $\ell$ is the number of nodes of $C_0$ in the inverse image of $x^+$, and $\ov\N_{\ell}^\R$ denotes the nodal stratum of the real Deligne-Mumford moduli space consisting of Real curves with $\ell$ pairs of conjugate nodes. Up to 0'th order terms, the family of operators $\wt D_{f_0}$ given by \eqref{lin.nodal.push} is the same as $\del_{(\T_{\Si_0}, c_\T)}\oplus 0\oplus 0$.
This describes a family $\wt D$ of linearizations to $\wt\fM(\Si_0)$, and identifies the virtual orientation sheaf of $\wt \fM(\Si_0)$ with 
\bear\label{or.sheaf.nodal.push}
\mo_{\wt \fM(\Si_0)}\cong \det \wt D\cong  \det \del_{(\T_{\Si_0}, c_\T)}\otimes \det \ev_{\bf y^+}^*(\T_{|x_+}) \otimes  \mathfrak f^* \det  T \ov \N_{\ell}^\R 
\eear
(uniquely up to homotopy).

\begin{rem} \label{R.cover.triv.T} When $f$ is in $\wt \fM(\Si_0)$, for every node $y$ in the preimage $\bf y^+$ of the node $x^+$ of $\Si_0$, we can consider the product $a_y(f):=a(f; y_1)a(f; y_2)$ of the two leading coefficients \eqref{lead.coef.f}-\eqref{lead.coef.f.2} of $f$ at $y$. Then after passing to the cover of the moduli space $\wt \fM(\Si_0)$ obtained by ordering the nodes as in \S\ref{S.deg.phi}, this product is a nowhere vanishing section of a complex line bundle. This induces a canonical complex linear isomorphism 
\bear\label{lead.coef.iso}
\ev_{y}^*( \T_{|x^+} )\cong (\T_{|y})^{\la(y)}
\eear
for each $y$ in ${\bf y^+}$, where $\la(y)$ is the contact multiplicity at $y$. Here $\T_{|y}=\T_{|y_1}\otimes \T_{|y_2}$ denotes the relative tangent bundle at the node $y$, which is the pullback of the relative tangent bundle $\T$ on the universal curve by the section $f \mapsto (f, y)$. Note that $x^+$ is fixed, so the first term in \eqref{lead.coef.iso} is trivial (and canonically trivialized  by a choice of a complex linear isomorphism $\T_{|x^+} \cong \cx$). 
\end{rem}

For a map $f_0:C_0\ra \Si_0$ to a nodal target, there is another description of the linearization  \eqref{lin.nodal.push} that is better suited for smoothing the target and is used in the next two sections. It corresponds to the linearization defined in \cite[\S7]{ip-sum}. For that, we consider instead the operator 
\bear\label{lin.nodal.0}
D_{f_0}: \Gamma(f_0^* \T_{\Si_0})^\R  \oplus T_{C_0}\ov  \M^\R \ra 
\La^{01}(f^*_0\T_{\Si_0})^\R, 
\eear
defined by the same formula \eqref{lin.rel} as $D_{\wt f}$, but with $f=f_0$ instead of $f=\wt f$. Note that the first term of the domain of $D_{f_0}$ is a subspace of the first term of the domain of $D_{\wt f}$, while the second term $T_{C_0}\ov  \M^\R$ involves all variations in $C_0$, not just the ones tangent to the nodal stratum (the latter correspond to $T_{\wt C} \ov \M^\R$).  Note that the operators $D_{f_0}$ and $D_{\wt f}$ have the same target, 
 cf. Remark \ref{R.pull.norm}. 
\smallskip

Decompose $T_{C_0}\ov  \M^\R$ into tangent and normal directions to the nodal stratum $\N_\ell$ as in \eqref{tan-norDM}, where the normal direction is identified with $ \T_{|\bf y^+}$.  As reviewed in \S\ref{S.lin.nodal}, we can use the normalization short exact sequence for the bundle $E=\T_{C_0}$ to rewrite \eqref{lin.nodal.0} as an operator 
\bear\label{lin.nodal}
\wh D_{f_0}: \Gamma(f_0^* \T_{\Si_0})^\R \oplus \T_{|\bf y^+} \oplus T_{\wt C} \ov\M^\R \ra 
\La^{01}(\wt f^* \T_{\wt \Si})^\R
\eear
\best
\wh D_{f_0} (\zeta, v, h) =\del_{ f_0^*(\T_{\Si_0}, c)}\zeta + A_{f_0}(\zeta) + \wh \gamma_{f_0}(v)+ b_{f_0}(h). 
\eest
Here $A_{f_0}$ and $b_{f_0}$ are exactly as in the formula \eqref{lin.node.push} for the operator 
$\wt D_{f_0}$, while $ \wh \gamma_{f_0}$ depends on the splitting of the normalization sequence associated to $E=\T_{C_0}$. The middle term in \eqref{lin.node.push} keeps track of the variation in the product of the leading coefficients at the nodes, while the middle term in \eqref{lin.nodal} keeps track instead of the variation normal to the nodal stratum in the real Deligne-Mumford moduli space.

\begin{rem}\label{R.lin.many}  The discussion above describes the linearization along $\wt \fM(\Si_0)$ in several equivalent ways: the linearization $\wt D_{f_0}$ is obtained from $D_{\wt f}$ via a splitting of the normalization short exact sequence for $E=\T_{C_0}$, while the linearization $\wh D_{f_0}$ is obtained from $D_{f_0}$ via a splitting of the normalization short exact sequence for $E=\T_{\Si_0}$. The intermediate operators $\wt D_{f_0}$ and $\wh D_{f_0}$ defined by \eqref{lin.nodal.push} and \eqref{lin.nodal} are equivalent via a complex linear isomorphism between the middle terms of their domain induced by the linearization of  \eqref{lead.coef.iso} cf. \S\ref{S.lin.nodal}. In particular, these linearizations are all equivalent (just written using different coordinates); either one could be used to obtain a local model for  
 $\wt \fM(\Si_0)$ or to (globally) identify its virtual orientation sheaf.
\end{rem} 
In particular, we get the following result. 
\begin{lemma} 
The (relative) orientation sheaf $\mo_{\wt \fM(\Si_0)}$ of $\wt \fM(\Si_0)$ is canonically (up to homotopy) isomorphic to:
\bear\label{or.sheaf.nodal.push.4}
\mo_{\wt \fM(\Si_0)}&\cong& \det \del_{(\T_{\Si_0}, c_\T)}\otimes \det \ev_{\bf y^+}^*(\T_{|x_+}) \otimes  \mathfrak f^* \det  T \ov \N_{\ell}^\R 
\\
\label{or.sheaf.node}
\mo_{\wt \fM(\Si_0)}&\cong& \det \del_{(\T_{\Si_0}, c_\T)} \otimes 
	\mathfrak{f}^* \det T \oM^{\R}.
\eear
\end{lemma}
\begin{proof} The first isomorphism is \eqref{or.sheaf.nodal.push}, obtained by using the family of linearizations  \eqref{lin.node.push}. The second one is obtained the same way as \eqref{or.sheaf.nodal.push}, but using instead the family of linearizations \eqref{lin.nodal.0}. In  \eqref{or.sheaf.node}, the last term $T \ov \M^\R$ denotes the tangent bundle of the real Deligne-Mumford moduli space parametrizing all the variations in the nodal domain $C_0$, including those that smooth out the nodes. 
\end{proof} 
%\medskip
%%%%%%%%%%%%%%%
%%%%%%%%%%%%%%%
%%%%%%%%%%%%%%%%

\section{The construction of the cobordism}\label{S.cobord}
\medskip

In this section we continue working with the moduli spaces \eqref{mod.or.consider} over the parameter space 
$\cP=\JV^\R(\cal F_{/\De})$ of RT perturbations on the family $\F$. We define an auxiliary space denoted $\wh \fM(\F_{/ I})$ and prove that generically its fiber $\wh \fM(\F_{/ I})_\nu$ is a cobordism between $\wt \fM(\Si_{s_0})_\nu$ and a cover $\wh \fM(\Si_0)$ of $\wt \fM(\Si_0)_\nu$. This will allow us to compare their fundamental classes (in rational Cech homology). 

\subsection{The cover $\wh \fM(\Si_0)$ of $\wt \fM(\Si_0)$} We start by defining a cover 
\bear\label{def.q.0}
q_0: \wh \fM(\Si_0)\ra \wt \fM(\Si_0)
\eear
of the moduli space $\wt \fM(\Si_0)$. Decompose $\wt \fM(\Si_0)$ as in \eqref{wt.M.0} according to the contact multiplicity $\la$ at the nodes in the preimage of the node $x^+$ of the target $\Si_0$. Fix local coordinates on $\Si_0$ around $x^+$; this induces a trivialization $\T_{x^+}\cong \cx$.  The elements of $\wt \fM_\la(\Si_0)$ are $(J, \nu)$-holomorphic maps $f:C_0\ra \Si_0$, without any rubber components, and such that
\begin{enumerate}[(i)]
\item the inverse image $f^{-1}(x^+)={\bf y}^+$ consists only of nodes $y_i^+$ of the domain with matching contact multiplicities $\la_i$, for $i=1, \dots, \ell(\la)$ and 
\item the domain has at most 1 other node (which must be real).
\end{enumerate}
The cover 
\bear\label{wh.M}
\wh \fM(\Si_0)=\ma \sqcup_{\la\vdash d} \wh \fM_\la (\Si_0)
\eear
consists of essentially the same objects, except that   
\begin{enumerate}[(i)]
\item [(iii)] for each contact node $y_i^+$ with contact multiplicity $\la_i$, we also choose a $\la_i$-root $\al_i \in \T_{|y_i^+}^\vee$ of the product of the leading coefficients  of $f$ at $y_i^+$, for every $i=1, \dots, \ell(\la)$, cf. \eqref{lead.coef.f}-\eqref{lead.coef.f.2} and \cite[(6.1)-(6.2)]{ip-sum}. 
\end{enumerate}
Denote the objects of $\wh \fM(\Si_0)$ by 
\bear\label{f.0.al}
(f, \al), \quad \text{ where } f \in  \wt \fM(\Si_0) \quad \mbox{ and }\quad \al= (\al_i)_i \in  (\T_{|\bf y^+})^\vee
\eear
satisfies
\bear\label{al.j}
\al_{i}^{\la_i}= a(f;  y_{i1}^+)  a(f; y_{i2}^+)  \quad \mbox{ for all }\quad i=1, \dots, \ell(\la).  
\eear
Here $a(f; y)=a_y(f)$ denotes the leading coefficient of $f$ at the point $y$ cf. \eqref{lead.coef.f}--\eqref{lead.coef.f.2}, while $\la_i$ is the contact multiplicity at $y=y_{i1}^+$ and $y_{i2}^+$.  

 With this definition, the map \eqref{def.q.0} is given by $q_0(f, \al)=f$. 
\medskip

Recall that the moduli space $\wt \fM(\Si_0)$ over the parameter space $\cP$ is relatively orientable as discussed above Lemma~\ref{L.phi.orient}. Fix any orientation on it and pull it back to $\wh\fM(\Si_0)$. 
\begin{lemma}\label{C.q.deg} 
The map  $q_0: \wh\fM_\la(\Si_0)\ra  \wt\fM_\la(\Si_0)$ is proper and has degree 
$\zeta(\la)/|\Aut(\la)|$ with respect to these orientations. Therefore  if $\Si_0$ has no rational connected components without any marked points, 
\bear\label{q.0.push} 
(q_0)_*[ {\wh \fM}_{\la} (\Si_0)_\nu]= \frac{\zeta(\la)}{|\Aut\, \la|} \cdot [{\wt \fM}_{\la} ( \Si_0)_ \nu] 
\eear
for generic $\nu$ (i.e. for all $\nu\in \cP^*$).
\end{lemma}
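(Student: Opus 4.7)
The plan is to exhibit $q_0$ explicitly as a finite-sheeted cover away from a codimension-two locus, count the generic fiber, and check that the pullback orientation makes this count signed-positive. I would first observe that for any $f \in \wt \fM_\la(\Si_0)$ the leading coefficients $a(f; y_{ij}^+)$ are the first nonvanishing Taylor coefficients of a nontrivial holomorphic map at a nodal branch, hence nonzero. Thus the products $p_i = a(f;y_{i1}^+)\,a(f;y_{i2}^+) \in \cx^*$ are nowhere vanishing, and the equations \eqref{al.j} cut out a finite, constant-cardinality set of solutions $(\al_i)$ that varies continuously with $f$. In particular the forgetful map $q_0$ factors through the cover $\wh \fM_\la^{ord}(\Si_0)$ where one also orders the nodes in $\mathbf{y}^+$ compatibly with $\la$; the map $\wh\fM_\la^{ord}(\Si_0) \to \wh\fM_\la(\Si_0)$ is the quotient by the diagonal $\Aut(\la)$ action permuting labels and roots simultaneously, and the composition to $\wt\fM_\la(\Si_0)$ is the full forgetful map. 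Properness follows at once from this factorization.

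For the degree, I pick $\nu \in \cP^*$ (so that Proposition~\ref{P.or.mfl} applies) and a generic $f \in \wt\fM_\la(\Si_0)_\nu$ at which the numbers $p_i$ are distinct across nodes $y_i^+$ of the same contact multiplicity. At such $f$ the $\Aut(\la)$-stabilizer is trivial, so the fiber $q_0^{-1}(f)$ consists exactly of the $\la_i$-th root choices at each node, i.e.\ it has cardinality
\begin{equation*}
\prod_{i=1}^{\ell(\la)} \la_i \;=\; \prod_k k^{m_k} \;=\; \frac{\zeta(\la)}{|\Aut(\la)|}.
\end{equation*}
The locus where either two $p_i$'s coincide across equal-multiplicity nodes or where the map fails to be a local homeomorphism is the image of a codimension-at-least-one real-analytic subvariety; combined with the codimension-two singular locus from Proposition~\ref{P.or.mfl}, its complement is open, dense, and homologically codimension at most one — sufficient to compute the topological degree.

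Finally, for the sign: the orientation on $\wh\fM_\la(\Si_0)_\nu$ is by definition the pullback under $q_0$ of the chosen orientation on $\wt\fM_\la(\Si_0)_\nu$, so at every point of the generic fiber $q_0$ is an orientation-preserving local homeomorphism and each sheet contributes $+1$ to the signed degree. Therefore $\deg q_0 = \zeta(\la)/|\Aut(\la)|$, and since both spaces are (relatively) oriented topological manifolds outside a homological codimension-two subset by Proposition~\ref{P.or.mfl}, the standard fact that a proper finite-degree map of oriented topological manifolds pushes the fundamental class to the degree times the target's fundamental class yields \eqref{q.0.push} in rational Cech homology. The only place requiring real care is ensuring that root-branching along the locus $\{p_i = p_j\}$ does not interfere with the pushforward; this is automatic because that locus is homologically codimension at least one in $\wt\fM_\la(\Si_0)_\nu$, it is contained in the thin-compactification singular set once combined with Proposition~\ref{P.or.mfl}(iv), and the pullback orientation extends continuously across any unbranched simple covering behavior in the remaining codimension-one stratum.
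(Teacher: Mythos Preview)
Your proof is correct and rests on the same idea as the paper's: count the root choices in the fiber and use that the orientation on $\wh\fM_\la(\Si_0)$ is by definition pulled back from $\wt\fM_\la(\Si_0)$. The paper's argument is shorter, however: it simply observes that $\wh\fM_\la(\Si_0)$ carries a free action of the finite group $\prod_{i=1}^{\ell(\la)} \Z/\la_i\Z$ (multiplying each $\al_i$ by a $\la_i$-th root of unity) and that $q_0$ is precisely the quotient by this action. Hence $q_0$ is a genuine covering map with every fiber of cardinality $\prod_i \la_i = \zeta(\la)/|\Aut(\la)|$, and the action is orientation preserving since the orientation is pulled back. Your detour through an ordered cover, the $\Aut(\la)$-stabilizer, and the locus $\{p_i = p_j\}$ is unnecessary and somewhat confused (the factorization you describe runs the wrong way, and there is no branching anywhere to worry about): the nodes in $\wt\fM_\la(\Si_0)$ are unordered, the root data is attached to each node rather than to a label, and the fiber count $\prod_i \la_i$ holds at \emph{every} point, not just generically.
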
 
\begin{proof} By construction $\wh \fM(\Si_0)$ comes with a group action on the choice of root 
$\al_i^+$ by a $\la_i$-root of 1. So the quotient map $q_0$ is proper and the order of the group is 
\best
\prod_{i}\la_i= \frac{\zeta (\la)}{|\Aut \,\la |} 
\eest
cf. \eqref{aut.la}-\eqref{mult.la}. Since here the orientation of $\wh\fM(\Si_0)$ is pulled back from that of $\wt \fM(\Si_0)$, then the action is orientation preserving. Moreover for generic $\nu$, the fiber of $\wt \fM(\Si_0)_ \nu$ is an oriented topological manifold by Proposition~\ref{P.or.mfl}, and therefore so is its cover (with the pullback orientation), giving \eqref{q.0.push}. 
\end{proof}

\subsection{The cobordism moduli space and its topology} Start with the subset $\wt\fM(\F_{/ I})= \ma \cup_{s\in I} \wt \fM(\Si_s)$ of the family \eqref{mo.over.fam}. Consider next the {\em set}
\bear\label{wh.M.fam}
\wh \fM(\F_{/ I})= \wh \fM(\Si_0) \sqcup \ma \cup_{s\ne 0} \wt \fM(\Si_s) 
\eear
with the topology defined below. Extend $q_0$ as the identity on $\ma \cup_{s\ne 0} \wt \fM(\Si_s)$ to get a map 
\bear\label{def.q}
q: \wh \fM(\F_{/ I}) \ra \wt \fM(\F_{/ I})= \ma \cup_{s\in I} \wt \fM(\Si_s). 
\eear
The RHS is a subset of $(J, \nu)$-holomorphic maps $f:C \ra \Si_s \subset \F$ without any rubber components, for some $s\in I$. Therefore it comes with the topology induced by the usual Gromov topology on the moduli space of real $(J, \nu)$-holomorphic maps to the total space $X=\F$ of the family of targets, cf. \cite[\S4.2]{gz}. 

\medskip

Below we define a Hausdorff topology on the LHS of \eqref{def.q}  with the following properties: (a) $q$ is proper and continuous (b) $q$ restricts to the covering map $q_0$ for $s=0$ and to the identity for $s\ne 0$. In particular, since $q$ will be continuous, the topology on the LHS will be a refinement of the Gromov topology as considered in \cite[\S4-5]{ip-sum}. 

By definition, \eqref{wh.M.fam} is the disjoint union of two sets $\wh \fM(\Si_0)$ and  $\ma \cup_{s\ne 0} \wt \fM(\Si_s)$ and 
condition (b) canonically determines the topology on each one of these two subsets, where $\wh \fM(\Si_0)=q^{-1}(\wt \fM(\Si_0))$ must be a closed subset by (a).  Therefore it suffices to describe what it means for a sequence 
%\footnote{\cred this also applies to nets rather than sequences; but the resulting space has a countable dense set (since each piece does).} 
of maps $f_n:C_n \ra \Si_{s_n}$ with $s_n\ne 0$ to converge to an element $(f_0, \al)\in \wh \fM(\Si_0)$, beyond the fact that it must converge to $f_0$ in the usual Gromov topology. In short, as in \cite{ip-sum}, the domains of these maps must converge to the domain $C_0$ of $f_0$ along a fixed direction normal to the nodal stratum of domains containing $C_0$.  To state this precisely, we use the set-up of \cite[\S3-5]{ip-sum}, adapted to our setting. We start with a choice of local coordinates as follows; see also \cite[\S4]{ip-sum}. 
\smallskip

Fix an element $(f_0, \al)$ of $\wh \fM(\Si_0)$ and let $C_0$ denote the domain of $f_0$. Let $w_1$, $w_2$ be the fixed local coordinates around the node $x^+$ of the target; then in a neighborhood of $x^+$ the curve $\Si_s\subset \F$ is described by $w_1w_2=s$, where $s\in I=[0, s_0]\subset \De$ is the gluing parameter. After rotating the disk $\De$ parametrizing the family $\F_{/\De}$ of targets, we may assume that $I\subset \R_{\ge0}$. Fix local coordinates $z_1$, $z_2$ around each node $y$ of $C_0$ as in \eqref{loc.coord.C.node} and construct a local slice $\cal S$ as in \eqref{tau.tauC.A} parametrizing local 
deformations $C_{\tau, u}$  of $C_0$. Here $u$ are local coordinates along the nodal stratum $\N_\ell$ of the real Deligne-Mumford moduli space while the gluing parameters 
\bear\label{tau.tauC}
\tau=(\tau_{i})_i \in \T_{|\bf y^+}, \quad \text{ where }\tau_i\in  \T_{| y^+_i} \text{ for all }i=1, \dots, \ell(\la)
\eear 
cf. \eqref{loc.coord.C.sm.node} provide normal coordinates to this stratum cf. \eqref{SES.T.h1}-\eqref{tan-norDM}. Note that a choice of local coordinates on $C_0$ at $y^+_i$ induces a local trivialization of the relative tangent bundle $ \T$ around $y_i^+$.
\medskip

Below we also use the notion and properties of $\de$-flat maps in the sense of \cite[Definition~3.1]{ip-sum} to describe a neighborhood of $f_0\in \wt \fM(\Si_0)\subset \cup_s \wt\fM(\Si_s)$ in the Gromov topology on the target of \eqref{def.q}; for precise details see \cite[(3.1)-(3.5)]{ip-sum} and  \cite[Lemma~3.2]{ip-sum}. Roughly speaking, $\de$-flat maps $f:C\ra \Si_s \subset \F$ do not have enough energy in the pre-image of a $\de$-neighborhood $U_\de \subset \F$ of the nodes $\bf x$ of $\Si_0\subset \F$ for a rubber component to start forming in there, cf.  \cite[(3.1)-(3.5)]{ip-sum}, thus giving rise to the uniform estimates of \cite[\S5]{ip-sum}. As $\de\ra 0$, $\de$-flat maps exhaust the complement of the strata with at least one rubber component, cf.  \cite[Lemma~3.2]{ip-sum}.  
\smallskip 

Consider now a sequence of  $(J, \nu)$-holomorphic maps  $f_n:C_n \ra \Si_{s_n}$ with $s_n\ne 0$ and which converges in the Gromov topology to the map $f_0\in \wt \fM(\Si_0)$. Since $f_0$ has no components mapped to the singular locus $\bf x$ of $\Si_0$, it is $\de$-flat in the sense of 
 \cite[Definition~3.1]{ip-sum} for all sufficiently small $\de>0$, cf.  \cite[(3.1)-(3.4)]{ip-sum}. Gromov convergence then implies that after decreasing $\de$, $f_n$ is $\de$-flat for all sufficiently large $n$. Moreover, up to reparametrizations of the domains and for large $n$, we may identify $$C_n=C_{\tau_n, u_n}$$ as in the paragraph containing \eqref{tau.tauC}, where $(\tau_n, u_n) \ra 0$ as $n\ra \infty$. 
Therefore by \cite[Lemma~5.3]{ip-sum}
\bear\label{lim.is.prod}
\lim_{n\ra \infty} \frac{s_n}{\tau_{i, n}^{\la_i}} = a(f_0; y_{i1}^+)  a(f_0; y_{i2}^+)  
\eear
for each contact node $y_i^+$ of $C_0$. Here $\tau_n=(\tau_{i, n})_i$ are the gluing parameters \eqref{tau.tauC} corresponding to 
$C_n= C_{\tau_n, u_n}$ (for $n$ large), while as before $a(f; y)\ne 0$ is the leading coefficient of $f$ at $y$ (in these local coordinates). In particular, since $s_n\ne 0$, this implies that $\tau_{i, n}\ne 0$ for all $i=1, \dots, \ell$ and $n$ large. 
\smallskip

With these preliminaries understood, declare that the sequence $f_n:C_n \ra \Si_{s_n}$ with $s_n\ne 0$ \textsf{converges in $\wh \fM(\F_{/ I})$ to} 
$(f_0, \al)$ if and only if $f_n$ converges to $f_0$ in the Gromov topology and moreover 
\bear\label{lim.is.prod.root}
\lim_{n\ra \infty} \frac{(s_n)^{1/\la_i}}{\tau_{i, n}} = \al_i
\eear
for all $i=1, \dots, \ell$. Here  $s_n\in \R_{>0}$ (by our simplifying assumption that $I\subset \R_{\ge 0}$) thus it has a unique $\la_i$-root $(s_n)^{1/\la_i}\in \R_{>0}$. The limit $(f_0, \al)$ if it exists is unique and is independent of the choice of local coordinates around $C_0$,  cf. \eqref{f.0.al} and \eqref{tau.tauC}. This completes the definition of the topology on the domain of \eqref{def.q}, which by construction satisfies all the properties listed in the paragraph after \eqref{def.q}. 

\begin{rem} In light of \eqref{al.j} and \eqref{lim.is.prod}, condition \eqref{lim.is.prod.root} can be equivalently replaced by 
\best
\lim_{n\ra \infty} \arg \tau_{i, n} = -\arg \al_i,  
\eest
where $\arg w$ denotes the argument of $w$; in particular, this means that the domains $C_n=C_{\tau_n, u_n}$ of $f_n$ converge to the domain $C_0$ of $f_0$ along a fixed direction normal to the nodal stratum $\N_\ell$; see also \cite[Lemma~4.2 and (4.10)]{ip-rel}.  By \cite[Lemma~5.4]{ip-sum}, this also implies that $f_n$ converge to $f_0$ in the $\la$-weighted Sobolev norms of  \S\ref{S.A.weighted} used in the proof of the splitting formula; see also \eqref{lin.rel.norms.weighted}. 
\end{rem}
\medskip

This motivates considering the following \textsf{local model}, regarded as a section of the (pullback) bundle $\T_{\bf y_+}$ over $\wh \fM(\Si_0)\ti \R_{\ge 0}$. It consists of tuples 
\bear\label{local.mod.hat}
(f, \al, \tau, s) \quad \mbox{ such that }  \quad \al_i \tau_i= s^{1/\la_i}  \quad \mbox{ for all }\quad i=1, \dots, \ell.  
\eear
Here $(f, \al)$ is an element of $\wh \fM(\Si_0)$, while $\tau=(\tau_i)_i\in \T_{\bf y_+}$ and $s\in \R_+$. Note that in cylindrical coordinates the conditions defining \eqref{local.mod.hat} become linear (i.e. $\log \al_i+\log \tau_i={1/\la_i}\log s$). %In fact, one could regard \eqref{local.mod.hat} as the (virtual) normal bundle of $\wh \fM(\Si_0)$ inside $\wh \fM(\F_{/ I})$.  
\subsection{The cobordism} Consider the family 
\bear\label{wh.M.fam.2} 
\wh \fM(\F_{/ I})= \wh \fM(\Si_0) \cup \ma \cup_{s\ne 0} \wt \fM(\Si_s)  \ra \cP
\eear
with the topology defined above; it is a family over the parameter space $\cP=\JV(\F_{/ \De})$ of Ruan-Tian perturbations, as well as a family over the interval $I=[0, s_0]$. We next verify that for generic $\nu\in \cP$, the fiber 
$\wh \fM(\F_{/ I})_\nu$ of \eqref{wh.M.fam.2} is a topological cobordism. This is the main step in the proof of  Theorem~\ref{T.splitVFC}. 
\begin{prop}\label{P.cob} Assume $\Si_0$ has no rational connected components without any marked points. Then for generic RT perturbation $\nu\in \cP$, the fiber $\wh \fM(\F_{/ I})_\nu$ of \eqref{wh.M.fam.2} is a (possibly non-compact) orientable topological cobordism between $\wt \fM(\Si_{s_0})_\nu$ and the cover $\wh \fM(\Si_0)_\nu$ of $\wt \fM(\Si_{0})_\nu$ defined by  \eqref{def.q.0}. 
\end{prop}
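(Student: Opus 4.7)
The plan is to combine the manifold-with-boundary structure on the $s>0$ part (already provided by Proposition~\ref{P.or.mfl}) with a local gluing analysis near the fiber at $s=0$, where the cover by the roots $\al_i$ is precisely the correction needed to unbranch the natural matching relation at the contact nodes.

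First, I would fix $\nu \in \cP^*$ and observe that on the preimage of $s \in (0, s_0]$ the projection $q$ is the identity, so Proposition~\ref{P.or.mfl}(iii) supplies an orientable topological manifold with boundary $\wt \fM(\Si_{s_0})_\nu$ there. The remaining task is to construct manifold-with-boundary charts near each $(f_0, \al) \in \wh \fM(\Si_0)_\nu$ whose boundary fiber is $\wh \fM(\Si_0)_\nu$. At such a point, transversality from Corollary~\ref{C.baire} makes $\wh D_{f_0}$ surjective, so the standard gluing theorem at a nodal curve (cf.\ \cite{ip-sum, ip-rel}) parametrizes nearby solutions in $\ma\cup_s \ov\fM^\R(\Si_s)_\nu$ by an open neighborhood in $\ker \wh D_{f_0}$ subject to matching conditions. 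By \eqref{lim.is.prod} the matching condition at each contact node $y_i^+$ has leading behavior $\tau_i^{\la_i}\, a(f_0; y_{i1}^+) a(f_0; y_{i2}^+) = s$, while any ordinary real node of $C_0$ contributes a free real smoothing parameter $\rho$ exactly as in the proof of Theorem~\ref{T.VFC.rel}.

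Second, I would pass to the cover $\wh \fM$ and use the convergence criterion \eqref{lim.is.prod.root} together with \eqref{al.j} to rewrite each such matching condition in the linear form $\al_i\, \tau_i = s^{1/\la_i}$. Since $I \subset \R_{\ge 0}$ by the simplifying assumption, the root $s^{1/\la_i} \in \R_{\ge 0}$ is unambiguous, so for each fixed $(f_0, \al)$ and each $s \in [0, s_0]$ the gluing parameters $\tau_i$ are uniquely determined by this equation. The resulting local model is $\ker \wh D_{f_0} \ti [0, s_0]$ (with an added $\R$-factor at any ordinary real node of $C_0$), i.e.\ a topological half-space chart whose boundary fiber over $s = 0$ is precisely $\wh \fM(\Si_0)_\nu$. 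Strata of codimension at least two (carrying rubber components or additional nodes) do not meet $\wh \fM(\F_{/I})_\nu$ by Lemma~\ref{L.transv.rel.F}, and hence do not interfere; patching the charts yields the global manifold-with-boundary structure. Orientability follows by pulling back the orientations on the pieces determined by $\mo_\F$ via \eqref{or.sheaf.node} and \eqref{or.sheaf}, which are compatible across the family since they arise from the same orientation data.

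The main obstacle I anticipate is verifying this local gluing model uniformly across all of $\wh \fM(\Si_0)_\nu$: specifically, showing that the matching condition can be placed simultaneously in the clean form $\al_i \tau_i = s^{1/\la_i}$ at all contact nodes (and compatibly with the real-node smoothing when present), and that the map $(f_0, \al, s) \mapsto (f_s, \tau)$ built from the gluing construction is a genuine homeomorphism onto a neighborhood in $\wh \fM(\F_{/I})_\nu$. This is essentially the symplectic sum gluing analysis of \cite[\S5]{ip-sum} adapted to the real setting, leveraging the refined weighted Sobolev estimates and the simplifying fact that the target smoothing parameter lies in $I \subset \R_{\ge 0}$, which makes $s^{1/\la_i}$ single-valued and identifies the covering $\wh \fM \ra \wt \fM$ with the local branching data of the gluing map itself.
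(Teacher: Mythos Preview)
Your proposal is correct and follows essentially the same route as the paper: use Proposition~\ref{P.or.mfl} for the $s>0$ part, invoke the gluing analysis of \cite{ip-sum} near $s=0$ to obtain the local model with matching condition $a(f;y_{i1}^+)a(f;y_{i2}^+)\tau_i^{\la_i}=s$, pass to the cover to linearize this as $\al_i\tau_i=s^{1/\la_i}$, and then solve uniquely for $\tau$ to get a half-space chart. The only ingredient you leave implicit is the $\de$-flatness of $f_0$ (and of a Gromov neighborhood of it), which the paper invokes explicitly to justify applying the uniform estimates of \cite{ip-sum}; you should mention this when you fill in the ``main obstacle'' paragraph.
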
 
\begin{proof} As in Proposition~\ref{P.or.mfl},  for generic $\nu$, 
\begin{enumerate}[(i)]
\item $\ma \cup_{s\in (0, s_0]}\wt \fM(\Si_s)_\nu$ is an orientable topological manifold with boundary $\wt \fM(\Si_{s_0})_\nu$ and 
\item $\wt \fM(\Si_0)_\nu$ is an orientable manifold and therefore so is its cover $\wh \fM(\Si_0)_\nu$. 
\end{enumerate} 
Moreover, as in \eqref{mod.to.consider}, for generic $\nu$ all the strata of $\ov \fM(\Si_0)$ are cut transversally, i.e. the cokernel of the linearization $D_f$ is trivial at all elements $f$ in the fiber over $\nu$.

Fix such a generic parameter $\nu$. We next describe the local model of a neighborhood in $\wt \fM(\F_{/ I})_\nu$ of a fixed point $f_0:C_0 \ra \Si_0$ in 
$\wt \fM(\Si_0)_\nu$. Recall that by definition $\wt \fM(\F_{/ I})= \cup_{s\in I}\wt \fM(\Si_s)$ is an {\em open} subset of the family $\ov\fM(\F_{/ I})=\cup_{s\in I} \ov\fM(\Si_s)$ of moduli spaces and consists of maps without any rubber components. Moreover, since $f_0$ has no rubber components, a sequence $f_n$ converges to $f_0$ in the topology of the compactified family $\ov \fM(\F_{/ I})= \cup_{s\in I}\ov \fM(\Si_s)$ if and only if $f_n:C_n\ra \Si_{s_n}\subset \F$ converge to $f_0$ in the Gromov topology on the space of real $(J, \nu)$-holomorphic maps $f:C\ra \F$. Thus a sufficiently small neighborhood $U_{f_0}$ of $f_0$ in the family $\ov\fM(\F_{/ I})_\nu=\cup_{s} \ov\fM(\Si_s)_\nu$ consists only of $\de$-flat maps for some  
$\de>0$. Finally, the cokernel of the linearization at $f_0$ is onto, and shrinking the neighborhood $U_{f_0}$ we may assume that it is a subset the open subset $\wt \fM(\F_{/ I})_\nu$ of $\ov\fM(\F_{/ I})_\nu$, thus disjoint from any of the codimension at least two strata.

Therefore, the gluing argument of \cite{ip-sum} applies to describe a sufficiently small neighborhood $U_{f_0}$ of $f_0$ in terms of the space of approximate maps, which in turn are constructed starting from the model space \eqref{local.mod.hat}, cf. \cite[Definitions~6.1-6.2]{ip-sum} (noting the change in notation).  %After shrinking $U_{f_0}$, we may assume $U_{f_0}$ is a subset of the open subset $\wt \fM(\F_{/ I})_\nu$ of $\ov\fM(\F_{/ I})_\nu$. 

For generic $\nu$, the local model of the neighborhood $U_{f_0}$ of $f_0$ in $\wt \fM(\F_{/ I})_\nu$ is therefore described by tuples 
\bear\label{local.mod}
(f, \tau, s) \quad \mbox{ such that }  \quad a(f; y_{i1}^+)  a(f; y_{i2}^+) \tau_i^{\la_i} = s \quad \mbox{ for all }\quad i=1, \dots, \ell.  
\eear
Here $f:C\ra \Si_0$ belongs to a neighborhood $\O_{f_0}$ of $f_0:C_0\ra \Si_0$ in the moduli space $\ov\fM(\Si_0)_\nu$ and $a(f; y)$ is the leading coefficient of $f$ at $y$. As before, $\tau=(\tau_i)_i$ are the gluing parameters of the domain and $s\in \R_{\ge 0}$ is the gluing parameter of the target. Moreover $\wt \fM(\Si_0)_\nu$ is an open subset of  $\ov\fM(\Si_0)_\nu$, therefore we may assume that $\O_{f_0}$ is a neighborhood of 
$f_0$ in $\wt \fM(\Si_0)_\nu$ after possibly shrinking it. 

By the definition \eqref{def.q} of the space $\wh \fM(\F_{/ I})$ and of its topology, the local model of 
$\wh \fM(\F_{/ I})_\nu$ around the point $(f_0, \al_0) \in \wh \fM(\Si_0)_\nu$ is therefore described by tuples $(f, \al, \tau, s)$ as in \eqref{local.mod.hat} where 
$(f, \al)$ is in the neighborhood of $(f_0, \al_0)$ in $\wh \fM(\Si_0)_\nu$, while $\tau=(\tau_i)_i$ and $s$ are the gluing parameters as above. But $\wh \fM(\Si_0)_\nu$ is a topological manifold and we can uniquely solve the equations defining \eqref{local.mod.hat} for the variable $\tau$ in terms of $(f,\al)$ and $s\in \R_{\ge 0}$, thus the fiber of the local model is a topological manifold with boundary at $s=0$, and therefore so is the fiber of $\wh \fM(\F_{/ I})$ in a neighborhood of $s=0$.
\end{proof}

We next consider the orientation sheaves of these spaces. When $\wt \fM(\F_{/ I})= \cup_{s\in I} \wt \fM(\Si_s)$ is regarded as a family over the parameter space $\cP \ti I$, its fiber at $(\nu, s)$ is $\wt \fM(\Si_s)_\nu$, and the (fiber-wise) linearization $D_f$ is given by the formula \eqref{lin.rel}, including when $s=0$ cf. \eqref{lin.nodal.0}. The kernel of this linearization encodes the (formal) tangent space to the fiber and its cokernel is the obstruction. 
When  $\wt \fM(\F_{/ I})= \cup_{s\in I} \wt \fM(\Si_s)$ is regarded as a family over $\cP$, we can similarly consider the fiberwise linearization ${\mathbb D}_f$, as $s$ is now allowed to vary in the interval $I$; the operator ${\mathbb D}_f$ is an extension of $D_f$ and its domain has an extra term  keeping track of the variation $\de s\in T_s I\cong \R$ in $s$. As $f:C\ra \Si_s\subset \F$ varies in $\wt \fM(\F_{/ I})$, we get a family of real maps to a {\em smooth} target $X=\F$ and up to 0'th order terms, the family of operators ${\mathbb D}_f$ is the same as a  (pullback) family of the form $\del_{f^*(\T, c_\T)}\oplus 0_{W_f}\oplus 0_\R$; see also paragraphs containing \eqref{det.D.T} and \eqref{5.E.Real.bd}. Therefore as before, we get an identification
\best
\det {\mathbb D}  \cong \det (\del_{(\T, c_\T)}\oplus 0_{W} \oplus 0_\R) 
\eest
over $\wt \fM(\F_{/ I})$.

By definition, the (virtual) relative orientation sheaves of the moduli spaces $\wh \fM(\Si_0)$ and $\wh \fM(\F_{/ I})$ over the parameter space $\cP$ are the (pullback) of the determinant bundle of the family of operators $D_f$ and respectively ${\mathbb D}_f$ as $f$ varies in 
$\wt \fM(\F_{/ I})$. The proof above also implies that for generic $\nu$, the fiber $\wh \fM(\F_{/ I})_\nu$  is a topological cobordism whose orientation sheaf is canonically identified with $\det {\mathbb D}_f$ (up to homotopy), including along the codimension 1 strata, while the orientation sheaf of its boundary $\wh  \fM(\Si_0)_ \nu \sqcup \wt \fM(\Si_{s_0})_\nu$ is similarly given by $\det D_f$. Both statements follow from the proof of the gluing theorem, which shows that the corresponding  moduli space is a small deformation of the space of approximate maps, and that the tangent space to the space of approximate maps is a small deformation of the kernel of the linearization. 

This gives the following result. 
\begin{cor}\label{C.or.cob} The (relative) orientation sheaf of the family $\wh \fM(\F_{/ I})$ over $\cP$ can be canonically identified (up to homotopy) with the pullback under \eqref{def.q} of the bundle 
\bear\label{det.wh.M}
\det \del_{(\T, c_\T)} \otimes 
\mathfrak{f}^* \det T \oM^{\R} \otimes \pi^* \det T I.  
\eear
Here $\T\ra \F$ is the relative tangent bundle \eqref{rel.T.fam}.
\end{cor}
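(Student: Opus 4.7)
The plan is to compute the determinant line bundle of the family $L_f$ of linearizations governing $\wh\fM(\F_{/I})\to\cP$, where $L_f$ augments the fiberwise operator $D_f$ (over $\cP\ti I$) by the extra real variable $\delta s\in T_sI$ encoding the direction along the target smoothing. Identifications of $\det L_f$ with the pullback of \eqref{det.wh.M} will be essentially immediate on the loci $s\ne 0$ and $s=0$ separately from formulas \eqref{or.sheaf} and \eqref{or.sheaf.node}; the real substance of the argument is to check that these two identifications extend to a single canonical homotopy class across the boundary $\wh\fM(\Si_0)_\nu$ of the cobordism $\wh\fM(\F_{/I})_\nu$ produced by Proposition~\ref{P.cob}.

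Over the open locus $\ma\cup_{s\ne 0}\wt\fM(\Si_s)\subset\wh\fM(\F_{/I})$, the map $q$ is the identity and the direction along $I$ is transverse to the target fiber $\Si_s\subset\F$, so $L_f$ splits as $D_f\oplus\mathrm{Id}_{T_sI}$. Using the identification $\T_{\Si_s}=\iota_s^*\T$ from \S\ref{S.fam.curves}, formula \eqref{or.sheaf} of Theorem~\ref{T.VFC.rel} gives $\det D_f\cong\det\dbar_{(\T,c_\T)}\otimes\mathfrak{f}^*\det T\oM^\R$ up to homotopy, and tensoring with $\pi^*\det TI$ produces the desired identification. At $s=0$, on the cover $\wh\fM(\Si_0)$, formula \eqref{or.sheaf.node} for the linearization $D_{f_0}$ similarly gives $\det D_{f_0}\cong\det\dbar_{(\T_{\Si_0},c_\T)}\otimes\mathfrak{f}^*\det T\oM^\R$, and tensoring with $\pi^*\det TI$ again produces the same form.

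The remaining task is to verify that these two local identifications are compatible up to homotopy along $s=0$, and this is where I expect the main obstacle to lie. By the gluing theorem of \cite{ip-sum} used in Proposition~\ref{P.cob}, a neighborhood of a point of $\wh\fM(\Si_0)$ inside $\wh\fM(\F_{/I})$ is modeled on \eqref{local.mod.hat}, and under transversality $\ker L_f$ extends continuously across $s=0$ by linear gluing, exactly as orientability extends across the codimension-one stratum in the proof of Theorem~\ref{T.VFC.rel.2}. The hard part will be checking single-valuedness of this extension: on $\wt\fM(\Si_0)$ the equation $\al_i\tau_i=s^{1/\la_i}$ determines $\tau_i$ only up to a $\la_i$-th root of unity, producing nontrivial monodromy of $\det L_f$ around $s=0$ that obstructs a canonical extension; fixing the roots $\al_i$ on the cover $\wh\fM(\Si_0)$ is precisely what kills this monodromy (and is also the combinatorial source of the degree $\zeta(\la)/|\Aut(\la)|$ of $q_0$ appearing in Lemma~\ref{C.q.deg}). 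Once single-valuedness is established, turning off the $0$'th order terms in $L_f$ via the standard isotopy through Fredholm operators delivers the stated canonical homotopy class of isomorphisms with the pullback under $q$ of \eqref{det.wh.M}.
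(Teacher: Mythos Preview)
Your approach is essentially the paper's: both compute $\det L_f$ where $L_f$ extends $D_f$ by the variable $\delta s\in T_sI$, identify it over $s\ne 0$ and $s=0$ via \eqref{or.sheaf} and \eqref{or.sheaf.node}, extend across $s=0$ by linear gluing, and turn off the $0$'th order terms.

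One clarification is worth making. The paper observes that $\det L_f$ is already a well-defined locally trivial line bundle on the base space $\wt\fM(\F_{/I})$, including across $s=0$, and then \emph{defines} the orientation sheaf of $\wh\fM(\F_{/I})$ to be the pullback $q^*\det L_f$. There is no monodromy of $\det L_f$ to kill: the branching at $s=0$ is a feature of the underlying space $\wt\fM(\F_{/I})$ (which fails to be a manifold-with-boundary there), not of the determinant line bundle sitting over it. The passage to the cover $\wh\fM$ is needed to make the \emph{space} a topological cobordism (Proposition~\ref{P.cob}), and the remaining content---which you correctly locate in the local model \eqref{local.mod.hat} and the gluing theorem of \cite{ip-sum}---is that the actual orientation sheaf of this cobordism agrees with $q^*\det L_f$. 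So your ``single-valuedness'' worry is misplaced, but the rest of your outline matches the paper.
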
 
The bundle \eqref{det.wh.M} is orientable over $\wt \fM(\F_{/ I})$ and canonically oriented after choosing a twisted orientation data on $\T\ra \F$, as described after \eqref{or.sheaf.cob.2}. In particular, any choice of orientation on  \eqref{det.wh.M} determines an orientation of the cobodism $\wh \fM(\F_{/ I})_\nu$ as well as on its two boundaries. With respect to this choice, Proposition~\ref{P.cob} gives the equality of the fundamental classes
\bear\label{wh.cob.eq} 
[\wt \fM (\Si_0)_\nu] = [ \wh \fM(\Si_{s_0})_\nu]\quad \mbox{ in } \quad \cHH_*(\wh \fM(\F_{/ I})_\nu; \Q)  
\eear
for generic $\nu$ in $\cP$. 
\begin{rem} Here we are using the convention that if $M_0$ and $M_1$ are oriented topological manifolds, and $W$ is an oriented cobordism from $M_0$ to $M_1$ then $W$ is a manifold whose boundary is $\partial W=-M_0 \sqcup M_1$ as oriented manifolds. 
\end{rem} 
%\medskip

%%%%%%%%%%%%%%%
%%%%%%%%%%%%%%%
%%%%%%%%%%%%%%%%
\section{Orientations}  \label{S.orient}
\medskip

In this section we review the orientation procedure for all the moduli spaces considered in \S\ref{S.VFC.mod}-\S\ref{S.cobord} and show that  a choice of (twisted) orientation data on $\Si$ or on the family $\F$ canonically orients these moduli spaces (over the space of Ruan-Tian perturbations on $\Si$ or $\F$). 
\medskip

First, recall that if $X$ is a space with an involution $c$, then a Real\footnote{called a real bundle pair in \cite[\S1.1]{gz}} bundle over 
$(X, c)$ is a pair $(E, \phi)$ where $E\ra X$ is a complex vector bundle and $\phi$ is an anti-complex linear involution on $E$ covering $c$; then $\phi$ is called the real structure on $E$.  An isomorphism between two Real bundles $(E_i, \phi_i)$ is a complex linear isomorphism $\psi$ compatible with the real structures i.e. such that $\psi^*\phi_{2}=\phi_{1}$. We denote by $\ov E$ is the same real bundle as $E$, but with opposite complex structure $J\mapsto -J$. 

\begin{defn}[{\cite[Appendix]{GI}}]\label{TRO.gen}  Let $(X, c)$ be a Real symplectic manifold and $(W,\phi)$ a Real bundle over it. A \textsf{twisted orientation} $\mo=(L, \psi,\mathfrak{s})$ for the bundle $(W, \phi)$ consists of 
	\begin{enumerate}[(i)]
		\item a complex line bundle $L\ra X$ such that the Real bundle $(E, c_E)=(L\oplus c^* \ov L, c_{tw})$ satisfies: 
		\bear\label{cond.tw.or}
		w_2(W^\phi)= w_2(E^{c_E}) \quad \mbox{ and } \quad 
		\La^{\mathrm{top}} (W, \phi) \cong  \La^{\mathrm{top}} (E, c_E)
		\eear	
		\item  a homotopy class $[\psi]$ of isomorphisms satisfying \eqref{cond.tw.or}. 
		\item  a spin structure $\mathfrak{s}$ on the real vector bundle $W^{\phi}\oplus (E^\vee)^{c_{E}^\vee}$ over the real locus, compatible with the orientation induced by $\psi$.
	\end{enumerate}	
\end{defn}
Here $E^\vee$ denotes the dual of $E$ and the real structure $c_{tw}$ on $L\oplus c^* \ov L$ is defined by 
\bear\label{L.plus.twL}
c_{tw}(x; v, w)=(c(x); w, v)
\eear
for all $x\in X$, $v\in L_x$ and $w\in \ov L_{c(x)}$. Note that given any complex line bundle $L$ satisfying condition (i), one can then {\em chose} (not uniquely) a homotopy class $[\psi]$ as in (ii) and a spin structure $\mathfrak{s}$ as in (iii) such that together they give a 
choice of twisted orientation on $(W, \phi)$.
\smallskip

As in \cite[Proposition~5.2]{gz}, the conditions in Definition~\ref{TRO.gen} determine a canonical homotopy class of isomorphisms
\bear\label{identif.f}
f^*(W \oplus E^\vee, \phi \oplus c_{E}^\vee) \cong (C\times \cx^{n+2}, c_{std}) 
\eear
which vary continuously as $f$ varies in a space of real maps $f:C\ra X$. (Here $n$ is the complex rank of $W$). In particular, as in the proof of \cite[Theorem~1.3]{gz} they determine a canonical homotopy class of isomorphisms 
\bear\label{can.iso.V}
\det \del_{(W, \phi)}\otimes \det \del_{(E, c_{E})^\vee }\cong \det\del_{(\cx^{n+2}, c_{std})}. 
\eear

But since $(E, c_E)^\vee=(L\oplus c^*\ov L, c_{tw})^\vee$ there is also a canonical isomorphism 
\bear\label{det.E.L}
\det \del _{(E, c_E)^\vee}  \cong p_1^*\det \del _{L^\vee}
\eear
obtained as in \cite[(2.9)]{GI} by projecting onto the first factor. The 
RHS of \eqref{det.E.L} is the determinant bundle of a family of complex operators thus has a canonical (complex) orientation.   
Together with the canonical orientation on the square of a bundle,  \eqref{can.iso.V} induces a trivialization
\bear\label{V.o.iso} 
\begin{tikzcd} 
		\det \del_{(W, \phi)}\otimes \det\del_{(\cx^n, c_{std})} \arrow{r}{\mo}[swap]{ \cong}&\underline{\R} 
\end{tikzcd}
\eear
which depends on the choice of twisted orientation data $\mo$. Here $\underline{\R}$ denotes the trivial real line bundle (canonically trivialized).  Note that the existence of a complex line bundle $L$ satisfying condition (i) of Definition~\ref{TRO.gen} ensures that the real line bundle on the right-hand side of \eqref{V.o.iso} is {\em orientable}, while conditions (ii)-(iii) of Definition~\ref{TRO.gen} fix an orientation on it.

\subsection{Existence of twisted orientations}\label{S.tw.exist}  We next show that, for symmetric curves {\em without any real special points}, there exits a choice of twisted orientation data on their relative tangent bundle. We also show that the same is true for the family of targets \eqref{family.smoothings}. 
\smallskip

First, when $(\Si, c)$ is a (possibly nodal) symmetric curve, then up to isomorphism, Real line bundles $(W, \phi)$ over $X$ are classified by 
$c_1(W)$ and $w_1(W^\phi)$, cf. \cite[Theorem~1.1]{GZ3}. Thus such a bundle has a twisted orientation data if and only if $w_1(W^\phi)=0$ and there exits a complex line bundle $L\ra \Si$ such that 
\bear\label{c.1.W}
c_1(L\otimes c^*\ov L)= c_1(W)
\eear
 cf. \eqref{cond.tw.or}; the first condition in \eqref{cond.tw.or} is automatic in this case.  
\smallskip

Let $(\Si,c)$ be a (possibly nodal) symmetric marked curve with marked points $V$ and without any real special points. Let $W=\T_\Si$ be its relative tangent bundle, regarded here as a Real line bundle, i.e. a complex line bundle with a real structure $c_\T$. Recall that $\T_\Si$ is given by \eqref{T.punctured} when $\Si$ is smooth and by \eqref{T.SES} when $\Si$ is nodal. Since none of the special points are real, the real locus of $\T_\Si$ is orientable. Thus a twisted orientation data $\mo=(\Theta, \psi, \mathfrak s)$ on 
$\T_\Si$ exists provided we can find a complex line bundle $\Theta \ra \Si$ such that 
\bear\label{c.1.T}
c_1(\Theta\otimes c^*\ov \Theta)= c_1(\T_\Si).
\eear
We next show that such a bundle $\Theta$ exists for any such $(\Si, c)$. When $\Si$ is smooth and connected, then the bundle $\Theta$ must be a complex line bundle whose Chern number is equal to $\tfrac 12c_1(\T_\Si)[\Si]=\tfrac  12\chi (\Si\setminus V)\in \Z$; such a bundle exists and is unique up to complex linear isomorphism. 

When $\Si$ is a smooth doublet (i.e. has two connected components interchanged by $c$) then $\Theta$ can be any complex line bundle on $\Si$ whose Chern numbers on the two halves sum to $\tfrac{1}{2}\chi(\Si\setminus V)\in \Z$; see \cite[Example 2.2]{GI}. Therefore a bundle satisfying condition \eqref{c.1.T} exists on every {\em smooth} symmetric curve $\Si$ without real marked points. Finally, when $\Si_0$ is a nodal symmetric curve which has no real special points, the bundle $\Theta_0$ on 
$\Si_0$ satisfying condition \eqref{c.1.T} can be obtained from one on the normalization after choosing an identification at the marked points corresponding to the nodes, compatible with the real structure; such a choice is unique up to homotopy. 

\smallskip

Consider next the family of targets \eqref{family.smoothings}, and let $\T\ra \F$ be its relative tangent bundle \eqref{rel.T.fam}, regarded as a Real line bundle. Note that $(\F, c_\F)$ deformation retracts to $(\Si_0, c_0)$ through real maps, so $\iota_0 \circ r$ is real 
homotopic to the identity, where $r: \F\ra \Si_0$  denotes the retraction.  Moreover, isomorphism classes of Real bundles on a Real space $X$ are in 1-1 correspondence with homotopy classes of real maps from $X$ to the classifying space, cf. \cite[Proposition~2.1]{real.bd.classif}. In particular, $r^*\T_{\Si_0}\cong \T$ as Real bundles. Since $ \F$ deformation retracts to $\Si_0$, every bundle on $\Si_0$ can be extended over the family  
$ \F$. Thus the complex line bundle $\Theta_0 \ra \Si_0$ described above extends to a complex line bundle $\Theta_\F$ over $\F$ with the property that there exists an isomorphism of Real bundles
$$
\psi_\F: (\T, c_\T) \lra (\Theta_\F\otimes c^*_\F\ov{\Theta}_\F, c_{tw}).
$$
This, together with the fact that  both the real part of $\cal T$ and the restriction of $\Theta_\F$ to $\F^{c_\F}$ are spin (i.e. have vanishing first and second Stiefel-Whitney classes)  implies that a twisted orientation $\mo_\F=(\Theta_\F, \psi_\F, \mathfrak s_\F)$ exists on the Real bundle $\T\ra \F$ for any family $\cal F$ as in  \eqref{family.smoothings}.

\begin{rem} When $\Si_0$ has no real special points, one can also show that every choice of twisted orientation data on the relative tangent bundle of the normalization $\wt \Si$ descends to one on the nodal curve $\Si_0$ and extends to one on the relative tangent bundle $\cal T \ra \F$ of the family of deformations of $\Si_0$. 
\end{rem}
\subsection{Orienting the moduli spaces}\label{S.or.mod} The considerations above are used to orient the real relative moduli spaces as described after \cite[(A.13)]{GI}, and in fact to simultaneously orient all the moduli spaces considered in \S\ref{S.VFC.mod}-\S\ref{S.cobord}.  For that it suffices to construct a homotopy class of (global) trivializations of the determinant bundle of the linearization over the union of virtual codimension at most 1 strata for each one of these moduli spaces over their parameter space.

Specifically, assume $X=\Si$ is a (smooth) marked symmetric curve, and consider the union $\wt \fM(\Si)$ of the virtual codimension at most 1 strata of the real relative moduli space associated to $\Si$.  Over the space of Ruan-Tian perturbations, identify the (relative) orientation sheaf of 
$\wt \fM(\Si)$ with 
\bear\label{or.sheaf.2}
\mo_{\wt \fM(\Si)} \cong \det \del_{(\T_\Si, c_\T)} \otimes \mathfrak{f}^* \det T \oM^{\R}_{\chi, \ell(\vec \mu)}
\eear
cf. \eqref{or.sheaf}. It suffices to describe a homotopy class of trivializations of the real line bundle \eqref{or.sheaf.2}. A choice of twisted orientation data 
$\mo=(\Theta, \psi, \mathfrak{s})$ on the relative tangent bundle $\T_\Si$ of 
$\Si$ gives rise to a canonical homotopy class of trivializations 
\bear\label{T.o.iso} 
\begin{tikzcd} 
		\det \del_{(\T_\Si, c_\T)}\otimes \det\del_{(\cx, c_{std})} \arrow{r}{\mo}[swap]{ \cong}&\underline{\R} 
\end{tikzcd}
\eear
cf. \eqref{V.o.iso} with $W=\T_\Si$. Combining this with the canonical homotopy class of isomorphisms  \cite[(3.1)]{gz} 
\bear\label{det.DM} 
\mathfrak{f}^* \det T \oM^{\R}_{\chi, \ell}=  \det \del _{(\cx, c_{\mathrm{std}})} 
\eear
induces a trivialization of \eqref{or.sheaf.2}, canonically up to homotopy. 
\smallskip

When $\Si=\Si_0$ has a pair of conjugate nodes the procedure above also applies to the moduli space $\wt \fM(\Si_0)$ by trivializing  the line bundle 
\bear\label{or.sheaf.node.8}
\mo_{\wt \fM(\Si_0)}\cong \det \del_{(\T_{\Si_0}, c_\T)} \otimes 
	\mathfrak{f}^* \det T\oM^{\R}_{\chi, \ell(\vec \mu)} , 
\eear
cf. \eqref{or.sheaf.node}. In this case a choice of twisted orientation data $\mo_0=(\Theta_0, \psi_0, \mathfrak{s}_0)$ on $\cal{T}_{\Si_0}$ as in \S\ref{S.tw.exist} determines by \cite[Proposition 4.3]{GZ2} a canonical homotopy class of isomorphisms as in \eqref{identif.f} and consequently as in \eqref{V.o.iso} with $W=\cal{T}_{\Si_0}$ and $L=\Theta_0$. 

\smallskip
This procedure also extends to {\em simultaneously} induce trivializations of the determinant line bundles of the moduli spaces in \eqref{fam.moduli.consider.td}, \eqref{def.q.0} and \eqref{def.q} as follows.

Fix a choice  of twisted orientation data $\mo_\F=(\Theta_\F, \psi_\F, \mathfrak{s}_\F)$ for the bundle $(\T, c_\T)$  over the family $(\F, c_\F)$ as in \S\ref{S.tw.exist}.  The pullback under \eqref{si.to.F} of $\T$ to $\Si_s$ and $\wt \Si$ gives their corresponding relative tangent bundles. Furthermore $\mathfrak o_\F$ pulls back to a twisted orientation data
\bear\label{6.mo.pull}
\mo_s=\iota_s^*\mo_\F\quad \text{on} \quad \T_{\Si_s}\lra\Si_s\quad \text{and respectively} \quad \wt \mo=\phi^*\mo_\F
\quad \text{on} \quad  \wt\T \lra\wt \Si.
\eear
Applying the procedure in \eqref{or.sheaf.2}-\eqref{det.DM} for $\mo_s$ and $\wt \mo$ to induce a canonical (relative) orientation on each of the families of moduli spaces 
\bear\label{7.mod.to.orient}
\wt \fM(\Si_{s\ne 0}), \quad \wt \fM(\Si_0), \quad \mbox{ and } \quad \wt \fM(\wt \Si)
\eear
over $\cal P$ defined in \eqref{fam.moduli.consider.td};  note that for $s=0$ this procedure is applied to orient \eqref{or.sheaf.node.8},  and it is the same as the one described in the paragraph containing it.  In particular, a choice of $\mo_\F$ determines a canonical orientation on the generic fibers $\wt \fM(-)_\nu$ of the moduli spaces in \eqref{7.mod.to.orient}.
\medskip

Consider next the families
\bear\label{6.def.q}
\wh \fM(\F_{/ I}) \ra \wt \fM(\F_{/ I})
\eear
over $\cP$ defined in \eqref{def.q}, and recall that by construction they restrict to a covering 
\bear\label{6.def.q.0}
q_0: \wh \fM(\Si_0) \ra \wt \fM(\Si_0) 
\eear 
over $s=0$. By Corollary~\ref{C.or.cob} the relative orientation sheaf of $\wh \fM(\F_{/ I})$ is canonically isomorphic to the pullback by  \eqref{6.def.q} of 
\bear\label{or.sheaf.cob.2}
\det \del_{(\T, c_\T)} \otimes 
\mathfrak{f}^* \det T \oM^{\R}_{\chi, \ell(\vec \mu)} \otimes \pi^* \det T I.  
\eear
Moreover, after orienting the segment $I$ from 0 to $s_0$, the line bundle \eqref{or.sheaf.cob.2} is canonically oriented by a choice of twisted orientation data $\mo_\F$ using the same procedure of combining \eqref{det.DM}  with \eqref{V.o.iso} for $W=\T$ over the moduli space $\wt \fM(\F_{/ I})=\cup_{s\in I} \wt \fM(\Si_s)$. Thus such a choice canonically orients the cobordism $\wh \fM(\F_{/ I})_\nu$ for generic 
$\nu$, and therefore also its boundary, in such a way that 
\begin{enumerate}[(i)]
\item $\wh \fM(\F_{/ I})_\nu$ is an oriented cobordism from the cover 
$\wh \fM(\Si_0)_\nu$ of $\wt \fM(\Si_0)_\nu$ to 
$\wt \fM(\Si_{s_0})_\nu$, and
\item the orientation on $\wh \fM(\Si_0)_\nu$ is the pullback of one on $\wt \fM(\Si_0)_\nu$.
\end{enumerate}
Furthermore, these  orientations on $\wt \fM(\Si_0)_\nu$ and $\wt \fM(\Si_{s_0})_\nu$ agree with those described in the paragraph containing \eqref{7.mod.to.orient} for the same choice of $\mo_\F$. With respect to these orientations, the degree of the covering map $q_0$ is given by Lemma~\ref{C.q.deg}.

\smallskip
However, there is {\em another} natural procedure of fixing a relative orientation of the moduli space $\wt \M(\Si_0)$ over the parameter space $\cP$, which may {\em a priori} lead to a different orientation on its generic fibers than the one described in the paragraph above. As described above Lemma~\ref{L.phi.orient}, the relative  orientation of $\wt \fM(\wt \Si)$ induced by $\wt \mo$ descends to a relative orientation on $\wt \fM(\Si_0)$ under the attaching map
\bear\label{phi.7.att}
\Phi: \wt \fM(\wt \Si) \ra \wt \fM(\Si_0). 
\eear
The degree of this cover with respect to {\em these} orientations is given by Lemma~\ref{L.phi.orient}. 
\subsection{Comparing orientations}\label{S.comp.or} The purpose of this section is to show that the two natural orientations on $\wt \fM(\Si_0)$ described in \S\ref{S.or.mod} (both induced by $\mo_\F$) agree, cf. Proposition~\ref{P.phi.orient.2} below. This is the crucial sign computation in this paper, which fixes the signs that might otherwise enter in the VFC splitting formula \eqref{split.VFC}. Its proof is a rather tedious but straightforward combination of many diagram chases, while keeping careful track at each step of the identifications made in the process of trivializing the determinant bundles of various families of operators.

Recall that as described in  \S\ref{S.mod.nod.lin}, the relative orientation sheaf of the moduli space $\wt \fM(\Si_0)$ has several equivalent descriptions obtained by writing the linearization in different coordinates, cf. Remark~\ref{R.lin.many}. Each of these can be trivialized in more than one way, depending on the choice of the orientation {\em procedure}, leading to {\em possibly different} orientations on the moduli space 
$\wt \fM(\Si_0)$. However, the difference between any two such resulting orientations can be, at least in principle, specifically determined, and our goal here is to compare two such specific orientations. In practice, one way to do so is by describing a natural family of identifications between the two families of linearizations ({\em de facto} describing the linearization of the identity in two types of charts on the moduli space) and then calculating the sign of the resulting isomorphism at the level of determinant bundles with respect to an {\em a priori} specified procedure of orienting each one of them (independent of the identification between the two linearizations). 
\smallskip

We start with some preliminary considerations comparing the pullback under $\Phi$ of the determinant bundle of a family of operators to the determinant bundle of the pullback family over the normalization. 

Assume $(E, c_E)$ is Real vector bundle over $\Si_0$ and let $(\wt E, c_{\wt E})$ denote its pullback to the normalization $\wt \Si$ of $\Si_0$. Pulling back $\del_{(E, c_E)}$ and $\del_{(\wt E, c_{\wt E})}$ to $\wt \fM(\Si_0)$ and respectively $\wt \fM(\wt \Si)$ gives rise to two families of operators over these moduli spaces; let $\det \del_{(E, c_E)}$ and $\det \del_{(\wt E, c_{\wt E})}$ or more precisely 
\bear\label{det.bd.wt}
 \det \; [\del_{(E, c_E)}\ra \wt \fM(\Si_0)]  \quad \mbox{ and } \quad   \det\; [\del_{(\wt E, c_{\wt E})}\ra \wt \fM(\wt \Si)]
\eear
denote their corresponding determinant line bundles. Pulling back the normalization short exact sequence \eqref{SES.E} over the moduli space (and using \eqref{D.wt.les} for the pullback operators) gives an exact sequence 
\bear\label{or.E.pull.SES}
0\ra \det \del_{(\wt E, c_{\wt E})} \ra  \Phi^* \det \del_{(E, c_E)} \ra  \det (\ev^*_{\bf y} E)^\R \ra 0,
\eear
inducing a canonical isomorphism
\bear\label{or.E.pull}
 \Phi^* \det \; \del_{(E, c_E)}=\det\; \del_{(\wt E, c_{\wt E})} \otimes  \det  (\ev^*_{\bf y}E)^\R. 
\eear
Here $\bf y$ denotes the collection of contact nodes of the domains, i.e. those in the preimage of the 
nodes $\bf x$ of $\Si_0$. But recall that $\Si_0$ has a pair of conjugate nodes denoted $x^\pm$ (and no other nodes).  The restriction to $x^+$ induces a canonical identification 
\bear\label{E.pm.fix}
%(E_{x^\pm}, c_E)\cong (E_{x^+}\oplus c^*\ov E_{x^+}, c_{tw}) \quad \mbox{ and } \quad 
(E_{\bf x}, c_E)^\R \cong E_{x^+}.
\eear
Furthermore, over the moduli space $\wt \fM(\Si_0)$ this induces a canonical identification
\bear\label{E.y=+}
(\ev^*_{\bf y}E)^\R \cong \ev^*_{\bf y^+}(E)= \ev^*_{\bf y^+}(E_{x^+}), 
\eear
where $\bf y^+$ denotes the collection of nodes in the preimage of $x^+$. But $x^+$ is fixed, and $E_{x^+}$ is a complex vector space.  Thus the isomorphism \eqref{or.E.pull}, after using the complex orientation induced by \eqref{E.y=+} on its last term, induces the isomorphism 
\bear\label{or.E.pull.triv}
\det\; [\del_{(\wt E, c_{\wt E})}\ra \wt \fM(\wt \Si)] \cong \Phi^* \det \; [\del_{(E, c_E)}\ra \wt \fM(\Si_0)].
\eear

\medskip

Next recall the relation described in \S\ref{S.mod.nod.lin} between the family of linearizations $D_{f_0}$ given by \eqref{lin.nodal.0} as $f_0$ varies in $\wt \fM(\Si_0)$, and the family of linearizations $D_{\wt f}$ given by \eqref{D.wt.f}, associated to the lifts $\wt f$, as $\wt f$ varies in $\wt \fM(\wt \Si)$. The relation between these two families of linearizations,  described in  Remark~\ref{R.lin.many}, is obtained via intermediate identifications with the linearizations $\wh D_{f_0}$ given by \eqref{lin.nodal} and $\wt D_{f_0}$ given by \eqref{lin.nodal.push}. Note that up to 0'th order terms these are pullback families of operators of the form $\del_{(E, c_E)}\oplus 0_W$, as described in paragraph containing \eqref{det.W}. The natural identifications between their domains, described in Remark~\ref{R.lin.many}, induce the following isomorphisms at the level of the determinant bundles
\bear\label{isom.phi.all}
\Phi^* \det D_{f_0}\ma \cong \Phi^* \det \wh D_{f_0}\ma\cong \Phi^* \det \wt D_{f_0} \ma\cong  \det D_{\wt f}, 
\eear
where
 \bear\label{or.sheaf.node.7}
\det D_{f_0} &\cong&\det \del_{(\T_{\Si_0}, c_\T)} \otimes \det
	\mathfrak{f}^* T\oM^{\R}_{\chi, \ell(\vec \mu)}, 
\\
\label{or.sheaf.node.desc.7}
\det \wh D_{f_0} &\cong&\det \del_{(\T_{\Si_0}, c_\T)} \otimes \det (\T_{\bf y^+})\otimes 
	\det \mathfrak{f}^* T\ov\N_\ell^\R 
\\
\label{or.sheaf.nodal.push.7}
\det\wt  D_{f_0} &\cong&\det \del_{(\T_{\Si_0}, c_\T)}\otimes \det \ev_{\bf y^+}^*(\T_{|x^+}) \otimes \det  \mathfrak{f}^*T\ov\N_\ell^\R.
\\
\label{or.sheaf.wt.7}
\det D_{\wt f}&\cong &\det \del_{(\T_{\wt \Si}, c_\T)}\otimes  \mathfrak f^* \det  T \ov \M^\R_{\chi+4\ell, \ell(\vec\mu)+2 \ell}.
\eear

Specifically, as in Remark~\ref{R.lin.many}, the first identification in \eqref{isom.phi.all}, between the pullbacks of  \eqref{or.sheaf.node.7} and  \eqref{or.sheaf.node.desc.7}, is induced by the decomposition  \eqref{tan-norDM}  into tangent and normal directions to the nodal stratum $\N_\ell$ (with $\ell=\ell(\la)$ pairs of conjugate nodes), cf. \eqref{T.DM} and \eqref{SES.T.h1}, which induces the isomorphism
 \bear\label{7.SES.DM}
\mathfrak{f}^* \det T\oM^{\R}_{\chi, \ell(\vec \mu)}\cong  \det (\T_{\bf y^+})\otimes \mathfrak{f}^* \det T\ov\N_\ell^\R. 
\eear
Here $T \ov \N_{\ell}^\R$ parametrizes those variations in $C_0$ that do not smooth the nodes. Note that its pullback under the attaching map is $T \ov\M_{\chi+4\ell, \ell(\vec\mu)+2 \ell}$ which parametrizes the variations in the normalization $\wt C$ of $C_0$, so
\bear\label{phi.pull.T.nor}
\Phi^*   \mathfrak f^* \det T  \ov \N_{\ell}^\R \cong \mathfrak f^* \det  \ov \M^\R_{\chi+4\ell, \ell(\vec\mu)+2 \ell}. 
\eear
The pullback of \eqref{7.SES.DM} combines with \eqref{phi.pull.T.nor} to give
\bear\label{7.pull.N=M}
\Phi^*\mathfrak f^* \det T\ov \M^\R_{\chi, \ell(\vec\mu)} \cong  \det \T_{\bf y^+} \otimes  \mathfrak f^*\det T\ov \M^\R_{\chi+4\ell, \ell(\vec\mu)+2 \ell}. 
\eear
The second identification in \eqref{isom.phi.all}, between the pullbacks of \eqref{or.sheaf.node.desc.7} and \eqref{or.sheaf.nodal.push.7} is induced by a complex linear isomorphism -- the linearization of \eqref{lead.coef.iso} -- cf. Remark~\ref{R.lin.many}. Finally, the last identification in \eqref{isom.phi.all},  between \eqref{or.sheaf.wt.7} and the pullback of \eqref{or.sheaf.nodal.push.7}, comes from the isomorphism \eqref{phi.pull.T.nor} and the isomorphism 
\bear\label{7.ses.t}
 \det \del_{(\T_{\wt \Si}, c_\T)}\; \ma\cong_{\eqref{or.E.pull}} \;\Phi^* \det \del_{(\T_{\Si_0}, c_\T)}\otimes \det \ev_{\bf y^+}^*(\T_{|x_+})
\eear
induced by the normalization short exact sequence for $\T_{\Si_0}$, cf. Remark~\ref{R.lin.many}.

\medskip

With these preliminary considerations understood, the main result in this section is the following. 
\begin{prop}\label{P.phi.orient.2}  Fix a choice of orientation data $\mo_\F$ on the family $\F$, and orient $\wt \fM(\wt \Si)$ and $\wt \fM(\Si_0)$ by the procedure outlined in the paragraph containing \eqref{7.mod.to.orient}. Then the attaching map $\Phi$ in \eqref{phi.7.att} is orientation preserving with respect to these orientations. 
\end{prop} 
\begin{proof} Orient $\wt \fM(\Si_0)$ by identifying its orientation sheaf with \eqref{or.sheaf.node.7} and trivializing the latter by combining
\bear\label{7.isom.on.nodal}
\det \del_{(\T_{\Si_0}, c_\T)} \; \ma \cong _{\eqref{T.o.iso}}^{\mo_0}\; \det \del_{(\cx, c_{std})}  \quad\mbox{ and }  \quad 
\mathfrak{f}^* \det T\oM^{\R}_{\chi, \ell(\vec \mu)}  \; \ma \cong _{\eqref{det.DM}}\;  \det \del_{(\cx, c_{std})}.  
\eear
We must compare the pullback under $\Phi$ of this orientation with the orientation of $\wt \fM(\wt \Si)$ obtained by trivializing \eqref{or.sheaf.wt.7} using 
\bear\label{7.isom.on.res}
\det \del_{(\T_{\wt \Si}, c_\T)} \; \ma \cong _{\eqref{T.o.iso}}^{\wt \mo} \; \det \del_{(\cx, c_{std})} \quad\mbox{ and }  \quad 
\mathfrak{f}^* \det T\oM^{\R}_{\chi+4\ell, \ell(\vec \mu)+2\ell}  \; \ma \cong _{\eqref{det.DM}}\;  \det \del_{(\cx, c_{std})}.   
\eear
\smallskip

This means calculating the sign, with respect to these orientations, of the isomorphism 
\bear\label{7.pull.0=4}
\Phi^*( \det \del_{(\T_{\Si_0}, c_\T)}  \otimes \mathfrak{f}^* \det T\oM^{\R}_{\chi, \ell(\vec \mu)} ) \cong 
\det \del_{(\T_{\wt \Si}, c_\T)} \otimes \mathfrak{f}^* \det T\oM^{\R}_{\chi+4\ell, \ell(\vec \mu)+2\ell} 
\eear
obtained from the composition \eqref{isom.phi.all} described above. The discussion after \eqref{isom.phi.all} implies that \eqref{7.pull.0=4} is the composition 
\begin{align*}
 \Phi^* \det \del_{(\T_{\Si_0}, c_\T)}  \otimes \Phi^* \mathfrak{f}^* \det T\oM^{\R}_{\chi, \ell(\vec \mu)} 
\ma \cong_{\eqref{7.pull.N=M}}&
\Phi^* \det \del_{(\T_{\Si_0}, c_\T)} \otimes   \det (\T_{\bf y^+})\otimes \mathfrak{f}^*  \det T\oM^{\R}_{\chi+4\ell, \ell(\vec \mu)+2\ell} 
\\
\ma\cong_{\eqref{lead.coef.iso}} \; &
\Phi^* \det \del_{(\T_{\Si_0}, c_\T)} \otimes   \det \ev_{\bf y^+}^*(\T_{|x_+})  \otimes \mathfrak{f}^*  \det T\oM^{\R}_{\chi+4\ell, \ell(\vec \mu)+2\ell}
\\
\ma \cong_{\eqref{7.ses.t}} &\det \del_{(\T_{\wt \Si}, c_\T)} \otimes \mathfrak{f}^* \det T\oM^{\R}_{\chi+4\ell, \ell(\vec \mu)+2\ell}.  
\end{align*}
The second isomorphism comes from the complex identification induced by the linearization of \eqref{lead.coef.iso}.

We are now ready to calculate to sign of the isomorphism \eqref{7.pull.0=4}; we do this in several steps. In addition to \eqref{7.pull.N=M} and \eqref{7.ses.t} we will also use the identification
\bear\label{7.pull.ses.cx}
\Phi^* \det \del_{(\cx, c_{std})} \otimes  \det   \ev^*_{\bf y_+}(\cx)\;  \ma \cong_{  \eqref{or.E.pull}} \; \det \del_{(\cx, c_{std})} 
\eear
induced by the normalization short exact sequence for $E=\cx$. 

\medskip
\textsc{ Step 1.} Consider first the tensor product of the isomorphisms \eqref{7.pull.N=M} and \eqref{7.pull.ses.cx}; it corresponds to \cite[(4.41)]{GZ2} (taking into account the change of notation).  After using the complex orientation on $\T_{\bf y^+}$ and  $\ev^*_{\bf y_+}(\cx)$ it induces the isomorphism 
\bear\label{7.isom.pull.DM=C}
\Phi^* \big(\det \del_{(\cx, c_{std})}   \otimes  \mathfrak{f}^* \det T\oM^{\R}_{\chi, \ell(\vec \mu)} \big) \cong  \det \del_{(\cx, c_{std})}   \otimes  
\mathfrak f^* \det T\ov \M^\R_{\chi+4\ell, \ell(\vec\mu)+2 \ell}.
\eear
Both sides are oriented via the canonical orientation on the tensor product of the terms in \eqref{det.DM}, i.e. by the second isomorphism in \eqref{7.isom.on.nodal} and respectively \eqref{7.isom.on.res}. By \cite[Proposition 4.18]{GZ2}, the sign of the  isomorphism \eqref{7.isom.pull.DM=C}, with respect to these orientations, is given by the number mod 2 of pairs of conjugate nodes of the domain i.e. is equal to $(-1)^{\ell}$. 
\smallskip

\textsc{ Step 2.} Consider next the tensor product of the isomorphisms \eqref{7.ses.t} and  \eqref{7.pull.ses.cx} corresponding to the normalization short exact sequences for $E=\T_{\Si_0}$ and respectively $E=\cx$. After using the complex orientation on $\ev_{\bf y^+}^*(\T_{|x_+})$ and $\ev^*_{\bf y_+}(\cx)$ it induces the isomorphism 
\bear\label{7.isom.pull.C=T}
\Phi^*  \big(\det \del_{(\T_{\Si_0}, c_\T)} \otimes \det \del_{(\cx, c_{std})}  \big) \ma\cong_{\eqref{or.E.pull.triv}}  \det \del_{(\T_{\wt \Si}, c_\T)}   \otimes \det \del_{(\cx, c_{std})}. 
\eear
Both sides are oriented via the canonical trivialization \eqref{T.o.iso} for $\mo_0$ on $\T_{\Si_0}$ and 
$\wt \mo$ on $\T_{\wt \Si}$, i.e. by the first isomorphisms in \eqref{7.isom.on.nodal} and respectively \eqref{7.isom.on.res}. By Lemma~\ref{C.com.sign} below, the sign of the isomorphism \eqref{7.isom.pull.C=T}, with respect to these orientations is also $(-1)^{\ell}$.
\smallskip

\textsc{ Step 3.} Finally, consider the tensor product of two copies of the isomorphism \eqref{7.pull.ses.cx}; after using the complex orientation on 
$\ev^*_{\bf y_+}(\cx)$ twice, it induces the isomorphism 
\bear\label{7.isom.pull.C=C}
\Phi^*   \big(\det \del_{(\cx, c_{std})} \otimes \det \del_{(\cx, c_{std})}  \big)  \ma\cong_{\eqref{or.E.pull.triv}} \det \del_{(\cx, c_{std})} \otimes \det \del_{(\cx, c_{std})}. 
\eear
Both sides are oriented via the canonical trivialization of twice of a bundle; with respect to these orientations, \eqref{7.isom.pull.C=C} is orientation preserving since the normalization short exact sequence is natural and the complex orientation on $\ev^*_{\bf y_+}(\cx\oplus \cx)$ agrees with the one induced by regarding it as twice of a bundle. 
\smallskip

The proof is completed by observing that the tensor product of \eqref{7.isom.pull.DM=C} and \eqref{7.isom.pull.C=T} agrees (up to homotopy) with 
the tensor product of  \eqref{7.isom.pull.C=C} and \eqref{7.pull.0=4} and the orientations discussed above also agree. Thus the sign of \eqref{7.pull.0=4} with respect to the orientations induced 
by \eqref{7.isom.on.nodal} and respectively \eqref{7.isom.on.res} is $(-1)^{\ell} \cdot (-1)^{\ell}=+1$. 
 \end{proof}

We conclude this section with two results on the behavior of the trivialization \eqref{V.o.iso} under pullback to the normalization. A special case of the following result was used in Step 2 above. 
\begin{lemma}\label{C.com.sign} Assume $\mo_0$ is a twisted orientation on a Real line bundle $(W,\phi)$ over the nodal symmetric marked curve $\Si_0$, and let $\wt \mo$ denote its pullback, which is a twisted orientation on the pullback bundle $(\wt W, \wt \phi)$ over the normalization $\wt \Si$. Consider the isomorphism 
\best
\Phi^*(\det \del_{(W, \phi)}\otimes  \det \del_{(\cx, c_{std})} ) \; \ma\cong_{\eqref{or.E.pull.triv}} 
\det \del_{(\wt W, \wt \phi)} \otimes \det \del_{(\cx, c_{std})} 
\eest
where the determinant line bundles are as in \eqref{det.bd.wt}. 
Orient both sides by \eqref{V.o.iso} for $\mo_0$ and respectively $\wt \mo$. Then, with respect to these orientations,  
this isomorphism has sign $(-1)^\ell$, where $\ell$ is the number of conjugate pairs of nodes of the domain.
\end{lemma}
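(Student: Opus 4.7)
My plan is to reduce the sign computation to the corresponding one for the trivial Real line bundle $(\cx, c_{std})$, for which the relevant input is \cite[Proposition 4.18]{GZ2}---the same result used at \eqref{7.pull.ses.cx} in Step~1 of the proof of Proposition~\ref{P.phi.orient.2}.

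First I would unpack the $\mo_0$- and $\wt \mo$-trivializations from \eqref{V.o.iso}. Each is built as the composition of (a) the Real-bundle identification \eqref{identif.f}, which yields \eqref{can.iso.V}
\begin{equation*}
\det \del_{(W,\phi)} \otimes \det\del_{(E, c_E)^\vee} \;\cong\; \det \del_{(\cx^{n+2}, c_{std})}
\end{equation*}
with $(E, c_E) = (\Theta_0 \oplus c_0^*\bar\Theta_0, c_{tw})$ and $n=1$, (b) the complex identification \eqref{det.E.L} replacing $\det\del_{(E, c_E)^\vee}$ by the complex determinant $\det\del_{\Theta_0^\vee}$, and (c) the canonical orientation on even tensor powers of $\det \del_{(\cx, c_{std})}$. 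Because $\wt\mo$ is defined as the pullback $\phi^*\mo_0$, the analogous chain on $\wt\Si$ is the $\Phi$-pullback of the one on $\Si_0$; in particular, the identification \eqref{identif.f} is compatible with normalization.

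Second I would set up a commutative diagram of the normalization-SES isomorphisms \eqref{or.E.pull.triv} applied separately to each Real bundle appearing in the chain, namely $W$, $E$, $\cx$, and $\cx^{n+2}$, and relate them through \eqref{identif.f}. Commutativity follows from the naturality of the SES \eqref{or.E.pull.SES} under direct sums together with the pullback compatibility of $\mo_0$ and $\wt\mo$. The sign of the lemma's isomorphism then factors as a product of the signs contributed by each Real bundle. The complex-bundle factor $\det\del_{\Theta_0^\vee}$ (equivalently $E$ via \eqref{det.E.L}) contributes sign $+1$ since its complex orientation is preserved under SES, while each copy of $\det \del_{(\cx, c_{std})}$ contributes the sign computed in \cite[Proposition 4.18]{GZ2}.

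Finally I would tally the parity. Pairs of $(\cx, c_{std})$ factors combine into canonical squares whose normalization-SES sign is $+1$ by the argument at \eqref{7.isom.pull.C=C}. After pairing the $n+2$ copies on the target side against the $n$ copies on the source side, exactly one unpaired contribution remains, producing the total sign $(-1)^\ell$. The main obstacle is precisely this parity bookkeeping: one must verify that although $\psi_0$ identifies only top complex determinants and $\mathfrak{s}_0$ identifies only real loci, so that \eqref{identif.f} is not in general $\cx$-linear, the discrepancy between the complex orientations on $\ev^*_{\bf y^+}(W\oplus E^\vee)$ and on $\ev^*_{\bf y^+}(\cx^{n+2})$ is exactly absorbed into the single effective $(\cx, c_{std})$-normalization sign. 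This is the analog of the $(-1)^\ell$ sign in Step~1 of the proof of Proposition~\ref{P.phi.orient.2}, transplanted from the Deligne-Mumford factor to the $W$-factor.
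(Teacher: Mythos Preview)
Your proposal misidentifies the source of the sign. You claim the twist-bundle factor $\det\del_{(E,c_E)^\vee}$ (with $E=\Theta_0\oplus c_0^*\bar\Theta_0$) contributes $+1$ because ``its complex orientation is preserved under SES,'' and that the $(-1)^\ell$ instead comes from an unpaired $(\cx,c_{std})$ factor via \cite[Proposition~4.18]{GZ2}. Both of these are wrong. First, \cite[Proposition~4.18]{GZ2} concerns the combination of the Deligne--Mumford factor with $\det\del_{(\cx,c_{std})}$ (that is exactly the content of \eqref{7.isom.pull.DM=C}); it does not assign a sign to bare copies of $\det\del_{(\cx,c_{std})}$, and your pairing-and-counting argument for those copies does not go through. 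Second, and more importantly, the paper shows that \eqref{identif.f} restricts at the nodes to a complex-linear isomorphism up to homotopy, so the commutative square built from \eqref{can.iso.V} contributes no sign at all --- the diagram you set up in your second step really does commute with sign $+1$.

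The actual $(-1)^\ell$ comes from the twist bundle via Lemma~\ref{C.twist.dif}, which you do not invoke. The point is that the isomorphism \eqref{or.E.pull.triv} for $(E,c_E)=(L\oplus c^*\bar L,c_{tw})$ uses the identification $(\ev_{\bf y}^*E)^\R\cong \ev_{\bf y^+}^*E$ given by restriction to ${\bf y}^+$, whereas the complex orientation on $\det\del_{(E,c_E)}$ coming from \eqref{det.E.L} uses instead the projection $p_1^*$ onto the $L$-factor. These two identifications differ at ${\bf y}^-$ by a complex conjugation on a rank-$\ell$ bundle, producing exactly $(-1)^\ell$. So your assertion that the complex SES for $\Theta_0^\vee$ is ``equivalent'' to the Real SES for $E$ via \eqref{det.E.L} is where the argument breaks: those two SESs are \emph{not} sign-compatible, and tracking that incompatibility is the whole content of Lemma~\ref{C.twist.dif}.
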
 
\begin{proof} The isomorphism \eqref{V.o.iso} is obtained by combining the isomorphisms \eqref{identif.f}-\eqref{det.E.L} with the canonical orientation on the square of a bundle. Therefore it suffices to understand how each one of these isomorphisms  behaves under pullback by $\Phi$. 
\medskip

First of all, when $(E, c_E)=(2L, 2c_L)$ is twice a Real bundle, the isomorphism 
\bear\label{phi.7.E}
\Phi^*\det \del_{(E, c_E)}\; \ma\cong_{\eqref{or.E.pull.triv} } \;\det \del_{(\wt E, c_{\wt E})} 
\eear
is orientation preserving with respect to orienting both $\det \del_{(E, c_E)}$ and  $\det \del_{(\wt E, c_{\wt E})}$ as a square of a real bundle. This follows by the naturality of the normalization short exact sequence and the fact that both the pullback of $E$ and the restriction of $E$ to $\bf y^+$ is twice a bundle. Moreover, the complex orientation on twice of a complex bundle agrees with the canonical orientation on twice a bundle. 
\smallskip

We next focus on the isomorphism \eqref{can.iso.V}, which is induced by the canonical homotopy class of isomophisms \eqref{identif.f}. But  \eqref{identif.f} pulls back to the normalization to give (up to homotopy) the isomorphism \eqref{identif.f} for the pullback structure; it also restricts to the nodes $\bf y^+$ as a complex linear isomorphism (unique up to homotopy). Therefore the diagram 
\best
\begin{tikzcd}
\Phi^*\det \del_{(W, \phi)\oplus (E, c_{E})^\vee} \arrow{r}[equal]{\mo_0}[swap] {\eqref{can.iso.V}} \arrow{d}[swap]{\eqref{or.E.pull.triv}}
&\Phi^* \det \del_{(\cx^{n+2}, c_{std})}  \arrow{d} {\eqref{or.E.pull.triv}}
\\
\det \del_{(\wt W, \wt \phi)\oplus (\wt E, c_{\wt E})^\vee}\arrow{r}[equal]{\wt \mo}[swap] {\eqref{can.iso.V}} &\det \del_{(\cx^{n+2}, c_{std})} 
\end{tikzcd}
\eest
commutes up to homotopy. Equivalently, consider the isomorphism 
\bear\label{phi.7.triv}
\Phi^*(\det \del_{(W, \phi)}\otimes   \det \del_{(E, c_{E})^\vee } \otimes  \det \del_{(\cx^{n+2}, c_{std})} ) 
\ma=_{\eqref{or.E.pull.triv}}  \det \del_{(\wt W, \wt \phi) } \otimes \det \del_{(\wt E, c_{\wt E})^\vee} \otimes 
\det \del_{(\cx^{n+2}, c_{std})} 
\eear
induced by the normalization short exact sequence and the complex orientation on the restriction of the bundles $W$, $E$ and $\cx^{n+2}$ to the nodes $\bf y^+$.  Orient both sides of \eqref{phi.7.triv} by the canonical isomorphism induced by the tensor product of the two sides of \eqref{can.iso.V} for $\mo_0$ and respectively $\wt \mo$. Then with respect to these orientations, the isomorphism \eqref{phi.7.triv} is orientation preserving.
\smallskip

It remains to understand the behavior of the isomorphism \eqref{det.E.L} under pullback. For that, consider the isomorphism \eqref{or.E.pull.triv} for
$(E, c_E)=(L\oplus c^*\ov L, c_{tw})$. By Lemma~\ref{C.twist.dif} below the sign of this isomorphism, with respect to the complex orientations induced by \eqref{det.E.L}, is the number mod 2 of pairs of conjugate nodes of the domain i.e. is equal to $(-1)^{\ell}$. This completes the proof. 
\end{proof} 

\begin{lemma}\label{C.twist.dif} Assume $L$ is a complex line bundle over $\Si_0$. Let $(E, c_E)= (L\oplus c^*\ov L, c_{tw})$ and denote by $\wt E$ its pullback to the normalization $\wt \Si$. Consider the isomorphism
\best
\Phi^*\det \del_{(E, c_E)} \; \ma\cong_{\eqref{or.E.pull.triv}} \det \del_{(\wt E, c_{\wt E})}
\eest
where the determinant line bundles are as in \eqref{det.bd.wt}. Orient both sides by the complex orientation induced by the isomorphism \eqref{det.E.L} for $E$ and respectively $\wt E$. Then with respect to these orientations, this isomorphism has sign $(-1)^{\ell}$, where $\ell$ is the number of pairs of conjugate nodes of the domain. 
\end{lemma}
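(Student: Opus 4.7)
The plan is to transport the real normalization isomorphism \eqref{or.E.pull.triv} for $(E,c_E)$ into the purely complex setting of $L$ via the projection isomorphism \eqref{det.E.L}, and to isolate the sign as a local discrepancy between two complex orientations on a common real vector bundle supported at the nodes.

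First, I would use the fact that the complex-linear projection $p_1\colon E = L \oplus c^*\ov L \to L$ identifies $c_{tw}$-invariant sections of $E$ with (all) complex sections of $L$: a $c_{tw}$-invariant section $s=(s_1,s_2)$ has $s_1$ arbitrary and $s_2(x)$ determined by $s_1(c(x))$. This is the fiberwise content of \eqref{det.E.L}, and the analogue holds for $(\wt E, c_{\wt E})$. Taking $c_{tw}$-invariants in the cohomology long exact sequence of the sheaf-theoretic normalization SES for $E$ and identifying via $p_1$, I obtain an exact sequence of real vector spaces sharing its outer four terms with the complex normalization long exact sequence for $L$, but with middle term $(\ev^*_{\mathbf y} E)^{\R}$ in place of $\ev^*_{\mathbf y} L$. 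Consequently, transporting \eqref{or.E.pull.triv} through \eqref{det.E.L} on both sides yields an isomorphism that coincides, as a map of real line bundles, with the complex normalization isomorphism for $L$, up to the sign discrepancy between the complex orientations on these two middle-term determinants.

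Second, I would compute the middle term explicitly. Using $(\ev^*_{\mathbf y} E)^{\R} \cong \ev^*_{\mathbf y^+} E$ from \eqref{A.E.real=+} together with $(c^*\ov L)_{|y^+} = \ov L_{|c(y^+)} = \ov L_{|y^-}$, one has $\ev^*_{\mathbf y^+} E \cong L_{|\mathbf y^+} \oplus \ov L_{|\mathbf y^-}$, while the middle term for $L$ is $\ev^*_{\mathbf y} L = L_{|\mathbf y^+} \oplus L_{|\mathbf y^-}$. These two complex vector bundles are canonically isomorphic as real vector bundles, but their complex structures differ by conjugation on the $\mathbf y^-$ summand; since conjugating the complex structure on a complex line reverses its complex orientation, each of the $\ell$ negative nodes contributes one sign, so the two complex orientations on the determinant of the middle term differ by $(-1)^\ell$. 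This discrepancy is precisely the sign of \eqref{or.E.pull.triv} relative to the orientations induced by \eqref{det.E.L}, giving the claim. The only point requiring some care, though not serious difficulty, is verifying that the $c_{tw}$-invariant long exact sequence for $E$ matches the complex one for $L$ as a sequence of real vector spaces under $p_1$; this is immediate from the $p_1$-identification of $c_{tw}$-invariant sections with sections of $L$, its extension to $\wt E \to \wt L$, and the naturality of the sheaf-theoretic normalization SES.
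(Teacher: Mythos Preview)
Your proposal is correct and follows essentially the same approach as the paper's proof: both transport the real normalization isomorphism through $p_1$ to the complex normalization SES for $L$, and then isolate the sign as the discrepancy between the two identifications of the middle term---namely $(\ev^*_{\mathbf y} E)^\R \cong \ev^*_{\mathbf y^+} L \oplus \ev^*_{\mathbf y^-} \ov L$ via restriction to $\mathbf y^+$ versus $\ev^*_{\mathbf y} L = \ev^*_{\mathbf y^+} L \oplus \ev^*_{\mathbf y^-} L$ via $p_1$---which differ by conjugation on the $\ell$ lines over $\mathbf y^-$.
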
 
\begin{proof} The isomorphism \eqref{or.E.pull.triv} is induced by the normalization short exact sequence \eqref{SES.E} together with the identification \eqref{E.y=+}. Consider also the normalization short exact sequence 
\best
0\lra L\lra \wt L   \lra L_{{\bf x}} \lra 0.
\eest
for the complex line bundle $L$.  Compare the pullback of the short exact sequences for $E$ and $L$ with the isomorphisms \eqref{det.E.L} for $E$ and $\wt E$ obtained by projecting onto the first factor. We would get a commutative diagram if we used the corresponding identification
\best
(\ev^*_{\bf y}E )^\R \ma\cong_{p_1^*} \ev^*_{\bf y}L = \ev^*_{\bf y^+}L\oplus \ev^*_{\bf y^-}L. 
\eest
However, we are using instead the identification \eqref{E.y=+} induced by the restriction to ${\bf y^+}$, i.e. 
\best
(\ev^*_{\bf y}E )^\R \cong  \ev^*_{\bf y^+}E =\ev^*_{\bf y^+}(L\oplus c^*\ov L) = \ev^*_{\bf y^+} L\oplus \ev^*_{\bf y^-} \ov L.
\eest
The difference between the complex orientations of the last two displayed equations is $(-1)^{\ell}$, coming from the rank $\ell$ of the complex bundle $\ev^*_{\bf y^-}L$. 
\end{proof} 
\smallskip
%%%%%%%%%%%%%%%
%%%%%%%%%%%%%%%
%%%%%%%%%%%%%%%%

\section{Proof of the VFC Splitting Theorem \ref{T.splitVFC}}\label{S.pf.splitVFC}
\medskip

Let $\Si_0$ be a nodal symmetric curve with $r$ pairs of conjugate marked points $V_0$ as in \eqref{marked.points} and a pair of conjugate nodes $x^\pm$ (and no other special points). We first assume that $\Si_0$ has no rational connected components without marked points; this assumption is removed at the end of this section, cf. Remark~\ref{R.VFC.tori}.

\smallskip

Consider the family $\cal F=\cup_s\Si_s $ of targets from \S\ref{S.fam.curves}. Fix a parameter $s_0\ne 0$ and let $I$ denote the segment 
$[0, s_0]\subset \De$. Fix also the degree $d$, the Euler characteristic $\chi$, and the ramification profile $\vec \mu=(\mu^1, \dots, \mu^r)$ and consider the families
\best
\ov \fM(\F_{/ I})=\ma \cup_{s\in I} \ov \fM (\Si_s) \quad \mbox{ and } \quad \ov \fM(\wt \Si)
\eest
of real relative moduli spaces defined in \eqref{mo.over.fam}. Recall that these are families over the parameter space $\cP=\JV(\F_{/\De})$ of RT-perturbations $\nu$, whose fibers $\ov \fM(-)_\nu$ at $\nu$ are respectively 
\bear\label{M.frak.7}
\ov \fM(\F_{/ I})_{\nu}= \ma \cup_{s\in I} \ov \fM^\R_{d, \chi, \vec \mu}(\Si_s, V_s)_\nu  \quad \mbox{ and } \quad 
\ov \fM(\wt \Si)_{\nu} =  \ma\sqcup_{\la \vdash d} \ov \fM^\R_{d, \chi+4\ell(\la), \vec \mu,\la, \la}(\wt \Si, \wt V)_\nu. 
\eear
By construction we have proper maps
\best
\begin{tikzcd}
\ov \fM(\wt \Si)  \ar[r, "\Phi"]&\ov \fM(\Si_0) & \mbox{ and } &  \ov \fM(\Si_s) \ar[r, hook]& 
\ov \fM(\F_{/ I})=\ma \cup_{s\in I} \ov \fM(\Si_s)
\end{tikzcd}
\eest
for all $s\in I$, as in \eqref{map.attach.0} and \eqref{map.include} (which correspond to $\nu=0$). 
%Recall that these families come with open subsets $\wt \fM(-)\subseteq \ov\fM(-)$, cf. \eqref{fam.moduli.consider.td}, defined as the union of virtual codimension at most 1 strata, and whose generic fiber is a manifold
Moreover, restricting to the open subsets $\wt \fM(-)\subseteq \ov\fM(-)$, cf. \eqref{fam.moduli.consider.td}, gives rise to proper maps 
\best
\begin{tikzcd}
\wt \fM(\wt \Si) \ar[r, "\Phi|"]&\wt \fM (\Si_0) & \mbox{ and } &  \wt \fM(\Si_s) \ar[r, hook]& \wt \fM(\F_{/ I}) \ma=^{\rm def}\ma\cup_{s\in I} \wt \fM(\Si_s). 
\end{tikzcd}
\eest
Notice that throughout this argument we must work with proper maps in order to push forward classes in rational Cech homology (e.g. maps that extend continuously to some compactification). 
\smallskip

Fix an orientation data $\mo_\F$ on the family $\F$ and let $\mo_s$ and respectively $\wt \mo$ be its pullback to $\Si_s$ and $\wt \Si$. 

\begin{proof}[\bf \em  Proof of the VFC Splitting Theorem \ref{T.splitVFC}]
 Assume that $\Si_0$ has no rational connected components without any marked points. Then by  Theorem~\ref{T.VFC.rel.2}, $\ov \fM(\Si_{s_0})$, $\ov \fM(\Si_0)$ and $\ov \fM(\wt \Si) $ are thinly compactified families over the parameter space $\cP =\JV(\F_{/\De})$. In particular, for generic $\nu$, the fibers of these families are thin compactifications of topological manifolds, cf. Proposition~\ref{P.or.mfl}. The latter are canonically oriented by the choice of $\mo_\F$ as described in the paragraph containing \eqref{7.mod.to.orient}.  
Our goal is to compare the images of the fundamental classes 
\best
[\wt \fM(\Si_{s_0})_\nu], \quad \Phi_*[\wt \fM(\wt \Si)_\nu], \quad \mbox{ and } \quad 
[\wt \fM(\Si_{0})_\nu]\qquad \mbox{ in } \quad \cHH_b(\wt \fM(\F_{/ I})_\nu; \Q) 
\eest
of these manifolds (for generic $\nu$). Here $b$ is the dimension of these moduli spaces, cf.  \eqref{dim.M=b}.
\smallskip

For generic $\nu$, by Lemma~\ref{L.phi.orient} and Proposition~\ref{P.phi.orient.2}, the attaching map $\Phi$ restricts to a finite degree orientation preserving proper map between the oriented topological manifolds $\wt \fM(\wt \Si)_\nu$ and $\wt\fM(\Si_0)_\nu$; each of these spaces decomposes as a disjoint union of open and closed subsets indexed by $\la$. Moreover by \eqref{Phi.0.push}
\bear\label{7.deg.phi}
[\wt \fM_\la (\Si_{0})_\nu]  = \frac {1}{|\Aut \; \la|} \cdot \Phi_*[ \wt \fM_\la (\wt \Si)_\nu]
\eear
in $\cHH_b(\wt \fM_\la(\Si_0)_\nu; \Q)$ and therefore in  $\cHH_b(\wt \fM(\Si_0)_\nu; \Q)$. 
\medskip

Consider next the auxiliary spaces $\wh \fM(\Si_0)$ and  $\wh \fM(\F_{/ I})$  constructed in \S\ref{S.cobord}. They are also families over the parameter space $\cP$ and come with proper 
maps 
\best
\begin{tikzcd}
\wh \fM(\Si_0) \ar[r, hook]  \ar[d, swap, "q_0"]&  \wh \fM(\F_{/ I}) \ar[d, "q"]
\\
\wt \fM(\Si_0) \ar[r, hook]  & \wt \fM(\F_{/ I})
\end{tikzcd}
\eest
cf. \eqref{def.q.0} and \eqref{def.q}. 
But for generic $\nu$, by Proposition~\ref{P.cob} the fiber $\wh \fM(\F_{/ I})_\nu$ of $\wh \fM(\F_{/ I})$ is a cobordism between the manifold $\wt  \fM(\Si_{s_0})_\nu$ and the cover 
$\wh \fM(\Si_0)_\nu$ of the manifold $\wt \fM(\Si_0)_\nu$. This cobordism is also canonically oriented by the choice $\mo_\F$ and the restriction of this orientation to the boundary agrees with the one above, cf. the paragraph containing \eqref{or.sheaf.cob.2}. Thus
\bear\label{7.cob.bd}
 [ \wt \fM(\Si_{s_0})_\nu]  = [\wh \fM(\Si_0)_\nu] =\sum_\la  [ \wh \fM_\la (\Si_{0})_\nu]
\eear
in $\cHH_b(\wh \fM(\F_{/ I})_\nu; \Q)$, where the first equality is  \eqref{wh.cob.eq}, and the second equality holds because, as in \eqref{wh.M}, the manifold $\wh \fM(\Si_{0})_\nu$ is a disjoint union of open and closed subsets indexed by $\la$. 

Furthermore, by Lemma~\ref{C.q.deg} 
\bear\label{7.deg.q}
(q_0)_*[\wh \fM_\la (\Si_0)]_\nu= \frac{\zeta(\la)}{|\Aut \;\la|}\cdot [\wt \fM_\la (\Si_0)_\nu] 
\eear
in $\cHH_b(\wt \fM(\Si_0)_\nu; \Q)$.
\smallskip

Pushing forward \eqref{7.cob.bd} by the proper map $q$ and combining it with the pushforward of \eqref{7.deg.q} and \eqref{7.deg.phi} under the proper inclusion $\wt \fM(\Si_0)_\nu \hookrightarrow \wt \fM(\F_{/ I})_\nu$ gives the equality 
\bear\label{7.vfc.tilde}
[ \wt \fM(\Si_{s_0})_\nu] = \sum_\la \frac{\zeta(\la)}{|\Aut \;\la|}\cdot [\wt \fM_\la (\Si_0)_\nu] = 
\sum_\la \frac{\zeta(\la)}{|\Aut \;\la|^2}\cdot \Phi_*[\wt \fM_\la (\wt\Si)_\nu] 
\eear
in $\cHH_b(\wt \fM(\F_{/ I})_\nu; \Q)$; this holds for generic $\nu$ in $\cP$. 
\medskip

It remains to show that the equality \eqref{7.vfc.tilde} lifts to an equality between the fundamental classes of the thin compactifications 
(for generic $\nu$) and thus uniquely extends to an equality between their VFCs for all $\nu$. 

For that, as in \eqref{wt.S}, consider the difference
\best
{\cal S}_\nu=\ov \fM(\F_{/ I})_\nu\setminus \wt \fM(\F_{/ I})_\nu 
\eest
which is a closed subset of $\ov \fM(\F_{/ I})_\nu$. For generic $\nu$, by Proposition~\ref{P.or.mfl}, it has homological dimension at most $(b+1)-2=b-1$,  i.e. $\cHH_{*}({\cal S}_\nu; \Q)$=0 for all $*>b-1$. As in the proof of \cite[Lemma~2.10]{ip-thin}, the long exact sequence \eqref{LES.Cech} in rational Cech homology
\best
\dots \ra \cHH_{*}({\cal S}_\nu;\Q) \ra \cHH_*(\ov\fM(\F_{/ I})_\nu; \Q) \ma\ra^\rho \cHH_*(\wt \fM(\F_{/ I})_\nu; \Q)\ra \dots 
\eest
associated to the closed pair $(\ov \fM(\F_{/ I})_\nu, {\cal S}_\nu)$ then implies that $\rho$ is {\em injective} in dimension  $*=b$. Therefore \eqref{7.vfc.tilde} implies that for generic $\nu$ 
\bear\label{7.vfc.bar}
[ \ov \fM(\Si_{s_0})_\nu] = \sum_\la \frac{\zeta(\la)}{|\Aut \;\la|^2}\cdot \Phi_*[\ov \fM_\la (\wt\Si)_\nu] 
\eear
in $\cHH_b(\ov\fM(\F_{/ I})_\nu; \Q)$ as follows. It suffices to check that $\rho$ maps \eqref{7.vfc.bar} to \eqref{7.vfc.tilde}. This follows from the naturality of the long exact sequence \eqref{LES.Cech}, combined with the facts that by construction (i) the intersection of $\wt \fM(\F_{/ I})_\nu$ with $\ov\fM(\Si_s)_\nu$ is equal to $\wt \fM(\Si_s)_\nu$, (ii) the inverse image of $\wt \fM(\Si_0)_\nu$ under $\Phi$ is equal to $\wt \fM(\wt \Si)_\nu$, and (iii) 
$\ov\fM(\Si_{s_0})_\nu$ and $\ov\fM(\wt \Si)_\nu$ are thin compactifications of the manifolds $\wt \fM(\Si_{s_0})_\nu$ and respectively $\wt \fM(\wt \Si)_\nu$  (for generic $\nu$ in $\cP$) by Proposition~\ref{P.or.mfl}. 
\medskip

Finally, recall that by Theorem~\ref{T.VFC.rel.2}, $\ov \fM(\Si_{s_0})$  and $\ov \fM(\wt \Si) $ are thinly compactified families over $\cP$, thus each fiber carries a VFC as in \eqref{VFC.gen}. 
By \cite[Lemma 3.4]{ip-thin} the relation \eqref{7.vfc.bar} for generic $\nu$ extends uniquely to the corresponding relation between the VFCs for all parameters $\nu\in \cP$, including for $\nu=0$. 
Applying Lemma~\ref{L.VFCs.match} and switching back to the original notation \eqref{M.frak.7} for the fibers at $\nu=0$  gives \eqref{split.VFC}. This completes the proof of Theorem~\ref{T.splitVFC} under the assumption that all rational connected components of $\Si_0$ have at least one marked point. The remaining case is treated in Remark~\ref{R.VFC.tori}  below. 
\end{proof}

\subsection{A relation between the absolute and relative VFCs}\label{S.abs=rel} We end this section with a consequence of the VFC Splitting Theorem, describing how the VFC behaves under adding a pair of conjugate marked points of the target. To state it, we need to first introduce some notation. 
\medskip

Recall that if $\Si$ is a smooth complex curve, there is a flat family $\cal C\ra \cx$ of deformations of $\Si$ whose 
central fiber is the nodal curve 
$\Si\cup \P^1$ with one node. It can be obtained by fixing a point $x\in \Si$ and blowing up the constant family $\Si \ti \cx\ra \cx$ at the point $x \ti 0$. The exceptional divisor is $\P(T_x \Si \oplus \cx)=\P^1$. So the 
restriction of $\cal C$ over $\cx\setminus 0$ is the product family, but $\cal C$ has a nodal fiber $\Si\cup \P^1$ over $s=0$.
\medskip

Assume next that $(\Si, V)$ is a smooth symmetric curve with $r$ pairs of conjugate marked points as in \eqref{marked.points}, and choose ${\bf x}= \{ x^\pm\}$ an additional pair of conjugate points on $\Si$, disjoint from $V$.  Blowing up the product family $\Si \ti \cx^2$ along the submanifold $(x^+ \ti 0 \ti \cx) \sqcup (x^-\ti \cx \ti 0)$ similarly yields a flat family $\cal F$ of deformations as in \eqref{family.smoothings}-\eqref{fam.smooth.real}. Each smooth fiber of $\F_{/\De}=\ma\cup_{s\in \De} \Si_s$ is $\Si_s=\Si$ for all $s\ne 0$, while the central fiber is a nodal curve 
\bear\label{8.nodal.Si}
\Si_0=\Si\cup \P_{\bf x}
\eear 
with a pair of conjugate nodes and an additional pair 
$\P_{\bf x}$ of conjugate spherical components; see also \eqref{def.PV}. The marked points $V_0$ of $\Si_0$ correspond to the marked points $V$ of $\Si$ under the inclusion $\Si \subset \Si_0$. 
\smallskip

Let $p: \Si_0\ra \Si$ denote the map which collapses $\P_{\bf x}$ to $\bf x$ and restricts to the identity on $\Si$.  This induces a collapsing map from any building associated to $\Si_0=\Si \cup \P_{\bf x}$ to one associated to $\Si$ (collapsing the entire chain 
$\P_{\bf x} \ma\cup_{\bf x_0=x_\infty}\P_{\bf x}\ma\cup_{\bf x_0=x_\infty} \dots\ma\cup_{\bf x_0=x_\infty} \P_{\bf x}$ attached at ${\bf x}$, but keeping the chains attached at $V$). 

Therefore $p$ induces a proper continuous map 
 \best
{ \mathfrak p}: \ov\M^\R_{d, \chi, \vec\mu}(\Si_0, V_0) \ra  \ov\M^\R_{d, \chi, \vec\mu}(\Si, V) 
 \eest
 at the level of (unperturbed) moduli spaces.
 \medskip
  
Consider the family of targets described above \eqref{8.nodal.Si}, and let $\mo_\F$ be a choice of twisted orientation data on it. Assume that $(\Si, V)$ has no rational components without any special points; then the same is true for $(\wt \Si, \wt V)$ and $(\Si_0, V_0)$. Therefore the  moduli spaces $\ov \M^{\R}(\Si, V)$,  $\ov \M^{\R}(\Si_0, V_0)$ and $\ov \M^{\R}(\wt \Si, \wt V)$ have a well defined VFC, as defined in \S\ref{S.VFC.mod}, and the VFC Splitting Theorem holds, as proved at the beginning of \S \ref{S.pf.splitVFC}. This has the following consequence. 
\begin{cor}\label{C.vfc.stab}  
For every $d$, $\chi$, and $\vec \mu$, the equality
\bear\label{9.VFC.stab}
	[\ov \M^{\R}_{d, \chi,\vec \mu }(\Si, V)]^{\vir, \mo}
	=\sum_{\la\vdash d}  \frac{\zeta(\la)}  {|\Aut (\la)|^2} \; {\mathfrak p}_*
	\Phi_*[\ov \M^{\R}_{d, \chi+4\ell(\la),\vec \mu, \la,\la }(\wt\Si, \wt V)]^{\vir, \wt\mo}
\eear
holds in the rational Cech homology of $\ov \M^{\R}_{d, \chi,\vec \mu}(\Si, V)$. 
\end{cor}
\begin{proof} By the VFC Splitting Theorem \ref{T.splitVFC}, for every $s\in \De \setminus 0$, we get the equality 
\bear\label{8.vfc.s}
[\ov \M^{\R}_{d, \chi,\vec \mu }(\Si_s, V_s)]^{\vir, \mo_s}
	=\sum_{\la\vdash d}  \frac{\zeta(\la)}  {|\Aut (\la)|^2} \; 
	\Phi_*[\ov \M^{\R}_{d, \chi+4\ell(\la),\vec \mu, \la,\la }(\wt\Si, \wt V)]^{\vir, \wt\mo}
\eear
in the rational Cech homology of the family $ \ma\cup_{s\in \De} \ov\M_{d,\chi,\vec\mu}^\R (\Si_s, V_s)$. It suffices to show that \eqref{8.vfc.s} pushes forward to \eqref{9.VFC.stab} under a suitable proper continuous map.  

The collapsing map $p: \Si_0 \ra \Si$ is the restriction of the composition 
\bear\label{9.proj.F}
\F \ra \Si \ti \De^2\xrightarrow{\mathrm{pr}_\Si} \Si
\eear
where the first arrow collapses the exceptional divisor and the second arrow is the projection onto $\Si$; note that both of these are holomorphic maps, compatible with the real structures. Then the  composition \eqref{9.proj.F} similarly induces a proper continuous map 
\best
\mathfrak{p}: \ma\cup_{s\in \De} \ov\M_{d,\chi,\vec\mu}^\R (\Si_s, V_s)\ra \ov\M_{d,\chi,\vec\mu}^\R (\Si, V)
\eest 
at the level of (unperturbed) moduli spaces, and therefore a map between their rational Cech homology groups. Consider next the pushforward of \eqref{8.vfc.s} under this map. Note that for  $s\ne 0$, the map \eqref{9.proj.F} restricts to the identity $\Si_s=\Si$, therefore 
\best
{\mathfrak p}_*[\ov \M^{\R}_{d, \chi,\vec \mu }(\Si_s, V_s)]^{\vir, \mo_s}= [\ov \M^{\R}_{d, \chi,\vec \mu }(\Si, V)]^{\vir, \mo} 
\eest
for all $s\ne 0$. Thus the pushforward of \eqref{8.vfc.s}  is \eqref{9.VFC.stab}. 
\end{proof}

\begin{rem}\label{R.VFC.tori} So far both the construction of the VFC and the proof the VFC Splitting Theorem were done under the technical assumption that the target curves have no spherical components without any special points. In general, we can always first deform $\Si$ to a nodal curve $\Si_0$ whose normalization $\wt \Si= \Si \sqcup \P_{\bf x}$ has at least one marked point on each spherical component cf. \eqref{8.nodal.Si}, and then {\em define} the VFC of $\ov\M^\R(\Si, V)$ by the formula \eqref{9.VFC.stab}. This is well defined, and the VFC Splitting Theorem \ref{T.splitVFC} {\em automatically} holds in this case. In particular, when $V=\emptyset$,  \eqref{9.VFC.stab} becomes
\best
[\ov \M^{\R}_{d, \chi}(\Si)]^{\vir, \mo}=\sum_{\la\vdash d}  \frac{\zeta(\la)}  {|\Aut (\la)|^2} \; {\mathfrak p}_*
	\Phi_*[\ov \M^{\R}_{d, \chi+4\ell(\la),\la,\la }( \Si \sqcup \P_{\bf x},  {\bf x} \sqcup {\bf x}_\infty)]^{\vir, \wt\mo}.
\eest 
\end{rem}

%\bigskip
%%%%%%%%%%%%%%%
%%%%%%%%%%%%%%%
%%%%%%%%%%%%%%%%

\section{Proof of the RGW Splitting Theorem~\ref{T.gluing}}\label{S.pf.RGW}
\medskip

We begin with a brief review of the definition of the integrand in \eqref{RGW.chern} and its properties. 
\subsection{Index bundles} Let $E$ be a holomorphic vector bundle over a complex curve $\Si$. The complex operator $\del_E$ determines by pullback a family of complex operators over the moduli spaces of holomorphic maps to $\Si$; the fiber at $f:C\ra \Si$ is the pullback operator $\del_{f^* E}$. Denote by 
\bear\label{def.ind}
\mathrm{Ind}\;  \del_{E}= R^\bullet \pi_*\ev^*(E) 
\eear
the index bundle associated to this family of operators, regarded as an element in K-theory. Here 
$\pi:\cal C\ra \ov \M$ is the universal curve over the moduli space and $\ev: \cal C \ra \Si$ is the evaluation map. When $\Si$ is a marked curve, Bryan and Pandharipande considered the index bundle $\mathrm{Ind}\;  \del_ E$ over the relative moduli space associated to $\Si$, cf. \cite[\S2.2]{bp1} (they denote the evaluation map $\ev$ by $f$).  This index bundle is defined by the same formula \eqref{def.ind}, but now the domain of $\pi$ is the universal curve over the relative moduli space. Let 
\bear\label{c.k}
c_k(- \mathrm{Ind}\;  \del_{E}) 
\eear
denote the corresponding $k$'th Chern class, regarded as an element in the Cech cohomology of the relative moduli space. 
\subsection{The integrand} Let $(\Si, c)$ be a symmetric marked curve with marked points $V$ as in \eqref{marked.points}, and let $L\ra \Si$ be a holomorphic line bundle over the underlying complex curve $\Si$. Fix the topological data $d, \chi$ and $\vec \mu$ and let $b$ be as in \eqref{dim.M=b}. 

Consider the real relative moduli 
space $\ov\M^\R_{d, \chi, \vec \mu}(\Si, V)$. It comes with a forgetful map to the (usual) relative moduli space associated to the complex marked curve $\Si$. Denote by 
\bear\label{integrand}
I(\Si; L)=c_{b/2}(-\mathrm{Ind} \; \del_{L}) \in \cHH^b(\ov \M_{d, \chi, \vec \mu}^{\R}(\Si, V); \Q)
\eear
the pullback of the corresponding Chern class \eqref{c.k} on the usual (complex) relative moduli space. This is the integrand that appears in \eqref{RGW.chern},  regarded as a cohomology class on the real relative moduli space. The RGW invariant  \eqref{RGW.chern} is then the pairing 
\bear\label{rgw.pair}
RGW^{c, \mo}_{d, \chi}(\Si, L)_{\vec \mu}=
	&
	 \langle I(\Si; L),\;  \frac1{|\Aut(\vec \mu)|} [\ov \M_{d, \chi, \vec \mu}^\R(\Si, V)]^{\vir, \mo}\rangle 
\eear
with the corresponding VFC of the moduli space.

\subsection{Splitting the integrand} Assume next that $L\ra \F$ is a holomorphic line bundle over the holomorphic family 
$\F=\cup_s \Si_s$ where $\F$ is as in \eqref{family.smoothings}. Denote by $L_s$ and respectively $\wt L$ the pullback of $L$ to 
$\Si_s$ and respectively the normalization $\wt \Si$ of $\Si_0$. As $s$ varies in $\De$, consider the class 
\best
I(\Si_s, L_s)=  c_{b/2}( -\ind \del_{L_s}) \in \cHH^b(\ov\M^\R_{d, \chi, \vec \mu} (\Si_s, V_s); \Q)
\eest
associated to $\Si_s$ as in \eqref{integrand}. Denote by 
\bear\label{I.on.fam.abs}
I(\F, L)= c_{b/2}( -\ind \del_L)  \in \cHH^b(\ov\M_{d, \chi, \vec \mu} (\F_{/\De}); \Q) 
\eear
the corresponding class on the family \eqref{mod.fam.la} of real relative moduli spaces; as in \eqref{integrand}, this is the pullback of the class 
$c_{b/2}(-R^\bullet \pi_*\ev^* L)$ on the family of (complex) relative moduli spaces. We also consider the class 
$I(\wt \Si, \wt L)$ associated by \eqref{integrand} to $\wt \Si$. These classes are related as follows. 
\begin{lemma}\label{L.int.pull.back} The pullback of $I(\F, L)$ under the inclusion \eqref{map.include} is equal to the class $I(\Si_s, L_s)$. 
Moreover, the pullback of 
$I(\F, L)$ under the attaching map \eqref{map.attach} is the corresponding class $I(\wt \Si, \wt L)$ associated to $\wt \Si$, i.e.
	\bear\label{8.pull.int}
	\Phi^* I(\F, L)= I(\wt \Si, \wt L).
	\eear
\end{lemma}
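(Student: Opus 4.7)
The plan is to split the lemma into its two claims and factor the attaching map \eqref{map.attach} as $\iota_0\circ\Phi_0$, where $\iota_0$ is the inclusion \eqref{map.include} at $s=0$ and $\Phi_0$ is the map \eqref{map.attach.0}. The first claim concerns the pullback by $\iota_s$ (for general $s$), which I would handle by base-change compatibility of the index bundle; the second claim reduces, via the first, to showing $\Phi_0^*I(\Si_0, L_0) = I(\wt\Si, \wt L)$, and the key input is the normalization short exact sequence for line bundles on a nodal curve.

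For the inclusion claim I would observe that the universal curve over $\ov\M(\F_{/\De})$ restricts along $\iota_s$ to the universal curve of $\ov\M^\R(\Si_s, V_s)$; the universal evaluation map restricts to evaluation into $\Si_s$; and $L$ restricts to $L_s$. Standard base change for $R^\bullet\pi_*$ then gives $\iota_s^*\ind\del_L = \ind\del_{L_s}$ in $K$-theory, and naturality of Chern classes yields $\iota_s^* I(\F, L) = I(\Si_s, L_s)$.

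For the attaching claim, given $[\wt f:\wt C\to\wt\Si]$ in the source of $\Phi_0$, its image $[f_0:C_0\to\Si_0]=\Phi_0([\wt f])$ has nodal domain $C_0$ obtained from $\wt C$ by identifying the $2\ell(\la)$ pairs of marked points above the target nodes $x^\pm$; let $\nu:\wt C\to C_0$ denote the corresponding partial normalization. On $C_0$ one has the normalization SES of sheaves
$$
0\to f_0^* L_0\to \nu_*\wt f^*\wt L\to \mathcal{S}\to 0,
$$
in which $\mathcal{S}$ is supported on the $2\ell(\la)$ new nodes with stalk $L_0|_{x^\pm}$ at a node above $x^\pm$. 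The chief technical step, which I expect to be the main obstacle, is to verify that this SES extends fiberwise to a SES of sheaves on the universal curve pulled back via $\Phi_0$, and that $R^\bullet\pi_*$ commutes with this extension, including across the boundary strata with rubber components (where the targets are buildings and care is needed in matching the universal curves). Granted this, since $\nu$ is finite, one obtains in $K$-theory
$$
\Phi_0^*\ind\del_{L_0} = \ind\del_{\wt L} - E,
$$
where $E = \ev_{\bf y^+}^*(L_0|_{x^+})\oplus \ev_{\bf y^-}^*(L_0|_{x^-})$.

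Since $L_0|_{x^\pm}$ are fixed one-dimensional complex vector spaces at specific points of $\Si_0$, the bundle $E$ is canonically a trivial complex vector bundle of rank $2\ell(\la)$ over the moduli space, so $c(E)=1$. Multiplicativity of the total Chern class then gives $c(-\Phi_0^*\ind\del_{L_0}) = c(-\ind\del_{\wt L})\cdot c(E) = c(-\ind\del_{\wt L})$; extracting the degree-$b/2$ component and invoking naturality of Chern classes yields $\Phi_0^* I(\Si_0, L_0) = I(\wt\Si,\wt L)$, which combined with the first claim proves \eqref{8.pull.int}.
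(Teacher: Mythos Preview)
Your proof is correct and follows essentially the same approach as the paper: base change for the universal curve handles the first claim, and the normalization short exact sequence plus triviality of the rank-$2\ell(\la)$ skyscraper contribution handles the second. The only presentational difference is that the paper writes the normalization SES on the target $\Si_0$ (i.e.\ $0\to L_{|\Si_0}\to \wt L\to L_{|x^+}\oplus L_{|x^-}\to 0$) and then pulls it back, whereas you write it directly on the domain $C_0$; and the paper notes that since $I(-,-)$ is by definition pulled back from the \emph{complex} relative moduli space via forgetting the real structure, both statements reduce immediately to the corresponding complex results already established by Bryan--Pandharipande in \cite{bp1,bp-TQFT}, which in particular disposes of your concern about extending the SES across the rubber strata without further argument.
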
 
\begin{proof} Forgetting the real structure commutes with the maps at level of moduli spaces induced by 
$\Si_s\hookrightarrow \F$ and $\wt \Si \ra \Si_0 \hookrightarrow \F$. The lemma then follows from the corresponding results for the usual (complex) family of relative moduli spaces proved by Bryan-Pandharipande, as given in the proofs of \cite[Theorem~3.2]{bp1} and \cite[Proposition~A.1]{bp-TQFT}. 

Specifically, the first statement is an immediate consequence of the fact that the universal curve of the relative moduli space associated to $\Si_s$ is the pullback of the universal curve on the family of moduli spaces. 

The second statement follows from the normalization exact sequence   
%, see \cite[(7)]{bp1}, the only difference now is that the target has 2 nodes 
$$
0\lra L_{|\Si_0}\lra \wt L\lra L_{|x^+}\oplus L_{|x^-}\lra 0,
$$ 
for the holomorphic line bundle $L$, where $x^\pm$ is the pair of (conjugate) nodes of $\Si_0$. Pulling this back over the (complex) moduli space gives 
$$
\Phi^*c(-\Ind\del_{L_{|\Si_0}})=c(-\Ind\del_{ \wt L})
$$
for the total Chern classes as in \cite{bp1}, since the pullback of the last term is a trivial rank $2\ell(\la)$ bundle.
\end{proof}

\begin{proof}[\bf \em Proof of the  RGW Splitting Theorem~\ref{T.gluing}]
Consider the invariant \eqref{RGW.chern} associated to $\Si_s$, written in the form  \eqref{rgw.pair}. {\em A priori}, it is a pairing between a homology and a cohomology class defined on the moduli space associated to $\Si_s$. However, by Lemma~\ref{L.int.pull.back} the integrand is pulled back from the class \eqref{I.on.fam.abs} on the family.  Combining this with the splitting of the VFC formula \eqref{split.VFC} gives 
	\begin{align*}
	RGW^{c_s, \mo_s}_{d, \chi}(\Si_s, L_s)_{\vec \mu}=
	&\;
	\frac1{|\Aut(\vec \mu)|}  \langle I(\F, L),\; [\ov \M_{d, \chi, \vec \mu}^{\R}(\Si_s, V_s)]^{\vir, \mo_s}\rangle
	\\
	=&\; \frac1{|\Aut(\vec \mu)|} \sum_{\la\vdash d}  \frac{\zeta(\la)}  {|\Aut (\la)|^2} 
	\langle I(\F, L),\; \Phi_*[\ov \M_{d, \chi+4\ell(\la), \vec \mu, \la, \la}^{\R}(\wt \Si, \wt V)]^{\vir, \wt \mo}\rangle
	\\
	=&\; \frac1{|\Aut(\vec \mu)|} \sum_{\la\vdash d}  \frac{\zeta(\la)}  {|\Aut (\la)|^2} 
	\langle\Phi^* I(\F, L),\; [\ov \M_{d, \chi+4\ell(\la), \vec \mu, \la, \la}^{\R}(\wt \Si, \wt V)]^{\vir, \wt\mo}\rangle
	\\
	=&	\sum_{\la\vdash d} \zeta(\la) RGW^{\wt c, \wt \mo}_{d, \chi+4\ell(\la)}(\wt \Si, \wt L)_{\vec \mu, \la, \la},
	\end{align*}
where the last equality uses \eqref{8.pull.int}. 
\end{proof}
  
Using the fact that the local RGW invariants are constant under smooth deformations of the target and passing to the (shifted) generating functions \eqref{RGW.real.rel} we also obtain the following consequence. 
\begin{cor}\label{C.rgw}  Let $(\Si_0, c_0)$ be a symmetric marked curve with a pair of conjugate nodes (and no real marked points),  $L_0$ a complex line bundle on $\Si_0$, and $\mo_0$ a choice of twisted orientation data on $\Si_0$. Then the local RGW invariants of any smooth deformation $(\Si, c, L, \mo)$ of $(\Si_0, c_0, L_0, \mo_0)$ are related to those of the 
normalization $\wt \Si$ of $\Si_0$ by 
\begin{align*}
	RGW^{c, \mo}_{d}(\Si, L)_{\vec \mu}= \sum_{\la\vdash d} \zeta(\la)t^{2\ell(\la)}  RGW^{\wt c, \wt \mo}_{d}(\wt \Si, \wt L)_{\vec \mu, \la, \la}. 
\end{align*}
\end{cor}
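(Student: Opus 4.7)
The plan is to assemble the $\chi$-graded identity of Theorem~\ref{T.gluing} into the generating series \eqref{RGW.real.rel}; beyond that theorem, the only content is a dimension/degree bookkeeping. By deformation invariance of the local RGW invariants (cf.\ the paragraph following \eqref{RGW.chern.0}), it suffices to prove the corollary for one specific smooth deformation. I would take $(\Si,c,L,\mo)=(\Si_s,c_s,L_s,\mo_s)$ with $s\ne 0$ coming from a versal family $\F_{/\De}$ of deformations of $\Si_0$, a line bundle $L\to\F$ extending $L_0$ (which exists since $\F$ deformation-retracts onto $\Si_0$), and a twisted orientation $\mo_\F$ on $\F$ extending $\mo_0$ (which exists by \S\ref{S.tw.exist}); then $\wt L=\phi^*L$ and $\wt\mo=\phi^*\mo_\F$ are the induced data on the normalization, cf.\ \eqref{6.mo.pull}.

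Next I would plug the $\chi$-graded formula \eqref{RGW.splits.coef} coefficient-wise into \eqref{RGW.real.rel} and swap the order of summation to obtain
\begin{align*}
RGW^{c,\mo}_d(\Si,L)_{\vec\mu} = \sum_{\la\vdash d}\zeta(\la)\sum_{\chi}RGW^{\wt c,\wt\mo}_{d,\chi+4\ell(\la)}(\wt\Si,\wt L)_{\vec\mu,\la,\la}\,t^{-\chi/2}(u/t)^{b/2+dk}.
\end{align*}
The substitution $\wt\chi=\chi+4\ell(\la)$ in the inner sum yields $t^{-\chi/2}=t^{2\ell(\la)}\,t^{-\wt\chi/2}$, so the desired factor $t^{2\ell(\la)}$ appears automatically. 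What remains is to check that the $(u/t)$-exponent is unaffected, i.e., that $b/2+dk=\wt b/2+dk$, where $\wt b$ is the virtual dimension on $(\wt\Si,\vec\mu,\la,\la,\wt\chi)$; then the inner sum assembles into $RGW^{\wt c,\wt\mo}_d(\wt\Si,\wt L)_{\vec\mu,\la,\la}$ by definition.

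The degree match $k=c_1(L)[\Si_s]=c_1(\wt L)[\wt\Si]$ is clear since both line bundles are pulled back from $L\to\F$ and degree is constant in flat families. The dimension identity $\wt b=b$ is a direct computation using \eqref{dim.M=b}. Normalizing a conjugate pair of nodes raises the target Euler characteristic by $2$ and smoothing lowers it by $2$, so $\chi(\wt\Si)=\chi(\Si_s)+4$; together with $\wt\de(\vec\mu,\la,\la)=\de(\vec\mu)+2(d-\ell(\la))$, one finds
\begin{align*}
\wt b-b &= d\bigl(\chi(\wt\Si)-\chi(\Si_s)\bigr)-\bigl(\wt\chi-\chi\bigr)-4(d-\ell(\la))\\
&= 4d-4\ell(\la)-4(d-\ell(\la))=0.
\end{align*}
I do not expect any analytic or geometric obstacle: Theorem~\ref{T.gluing} is doing all the real work, and the remaining check is the elementary bookkeeping above.
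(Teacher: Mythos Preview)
Your proposal is correct and matches the paper's approach exactly: the paper simply states that the corollary follows from Theorem~\ref{T.gluing} by deformation invariance and ``passing to the (shifted) generating functions \eqref{RGW.real.rel}'', and you have carried out precisely that bookkeeping, including the dimension check $\wt b=b$ that the paper leaves implicit.
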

As discussed in \cite[\S4]{GI}, every smooth symmetric curve $\Si$ (without any real marked points) can be deformed into such a nodal symmetric curve $\Si_0$ by pinching a pair of {\em conjugate splitting circles}. Furthermore, every complex line bundle $L$ over $\Si$ and choice $\mo$ of twisted orientation data for $\Si$ can also be deformed to the nodal curve $\Si_0$ and then lifted to the normalization $\wt \Si$ of $\Si_0$. Conversely, every complex line bundle $\wt L$ and twisted orientation data $\wt \mo$ on 
$\wt \Si$ descend to $\Si_0$ and can be deformed to a complex line bundle $L$ and orientation data $\mo$ on $\Si$. Therefore Corollary~\ref{C.rgw} implies \cite[Theorem~4.1]{GI}. 
 %%%%%%%%%%%%%%%%%%%%%%%%%%%%%%%%%%%%%%%%%%%%%%%%
%%%%%%%%%%%%%%%%%%%%%%%%%%%%% Appendix A %%%%%%%%
%%%%%%%%%%%%%%%%%%%%%%%%%%%%%%%%%%%%%%%%%%%%%%%%
\smallskip
\setcounter{equation}{0}
\setcounter{section}{0}
\setcounter{theorem}{0}

\appendix

\section{Linearizations} 
\medskip

In this appendix we include more details about  the various linearizations we considered and the precise relation between them. 
For transversality and gluing arguments, we use Fredholm completions in the type of (weighted) Sobolev norms defined in \cite[p. 80]{ip-rel},  \cite[p. 971]{ip-sum} and  \cite[(1.1) of Corrigendum]{ip-sum}, reviewed in \S\ref{S.A.weighted}  and used in \cite{ip-rel} and \cite{ip-sum} to analyze the relative moduli spaces (relative a smooth divisor) appearing in the symplectic sum construction. For the absolute moduli spaces, similar norms are defined in \cite[\S B.4]{pardon}, and used by Pardon for a proof of the usual gluing theorem. For orientability and the construction of determinant line bundles of real CR-operators we use \cite[Appendix~A.2]{gz}, where the authors take Fredholm completions in a different variant of these norms, those originally used by Li-Tian in the proof of the usual gluing theorem, cf. \cite[\S3]{li-tian}. The monograph \cite{ms} uses both usual Sobolev norms reviewed in \cite[\S B.1]{ms} for transversality \cite[\S 3.2]{ms} and another version of weighted Sobolev norms \cite[\S 10.3]{ms} for the proof of the usual gluing theorem \cite[\S 10.1]{ms}.

\subsection{General considerations}\label{S.gen.consid} 
Let $(C, c)$ be a (smooth) symmetric marked curve, and $\bf x$ denote a symmetric subset of  the marked points of $C$.  Let $(E,c_E)\ra (C, c)$ be a Real bundle, i.e. a complex vector bundle with a real structure $c_E$ covering $c$ (called a real bundle pair in \cite{gz}). Denote by $\Gamma(E)^\R$ the space of smooth sections of $E$ which are real, i.e. invariant under the involution $\xi\mapsto c_E\circ \xi \circ c$.  Let
\bear\label{Real.CR}
D:\Gamma(E)^\R \ra \La^{01}(E)^\R=\Gamma(\Omega^{01}_C\otimes_\cx E)^\R, \quad D\xi=\del\xi+ A\xi
\eear
be a real CR-operator on the Real bundle $(E, c_E)$. Here $\del$ is a holomorphic $\del$-operator on $E$ compatible with the real structure (i.e. such that $c_E$ is anti-holomorphic), and 
\best
A\in \Gamma(C; \Hom_\R (E, \Omega^{01}_C \otimes_\cx E))^\R
\eest
is a 0'th order term; see \cite[(4.4)]{gz}. After completing in (weighted) Sobolev norms, $D$ becomes a Fredholm operator. Examples include
\begin{enumerate}[(i)]
\item Sobolev completions $\D:W^{k, p}\Gamma(E)^\R \ra W^{k-1,p}\La^{01}(E)^\R$ for $p>1$ and $kp>2$ as in \cite[\S B.1]{ms}.  
\item completions as in \cite[\S3]{li-tian}. 
\item completions $\D:W^{k,p, \de}\Gamma(E)^\R \ra W^{k-1,p,\de}\La^{01}(E)^\R$ as in \cite[\S B.4]{pardon}, weighted at the points in 
$\bf x$, for $\de\in (0,1)$ and  $k, p$ so that $W^{k, p, \de} \hookrightarrow C^0$ eg. $k\ge 1$, $p\ge 4$ (Pardon uses $p=2$ and $k\ge 6$).
\end{enumerate}
By elliptic regularity, the kernels and cokernels of all these Fredholm completions are the same, and consist of smooth elements, see eg. \cite[\S C.1]{ms}  and  \cite[Lemma~B.5.2]{pardon}. Therefore we let  
\bear\label{ker.D0}
\ker D:= \ker \fD,   \quad  \cok D:=\cok \fD
\eear
where $\D: \E\ra \fF$ is {\em any} of the completions (i)-(iii) of the real CR operator $D: \Gamma(E)^\R  \ra \La^{01}(E)^\R$. 
Note that subspace $\ker D\subset \Gamma(E)^\R$ is {\em independent} of these choices. Moreover, 
$\cok D=\ker D^*$, where $D^*$ is the formal adjoint of \eqref{Real.CR}, regarded as a real CR operator on the bundle $\ov F$, 
where $F=\Omega^{01}_C\otimes E$. When $(E, c_E)\ra (C,c)$ is a Real line bundle, the index of \eqref{Real.CR} is equal to
\bear\label{A.ind.D}
\ind D= \ind\del_{(E, c_E)}=c_1(E)+\tfrac 12\chi(C).
\eear
  
%\subsection{The evaluation short exact sequence} 
Consider the short exact sequence of vector spaces 
\bear\label{split.gamma.E}
0\ra \Gamma_{\bf x}(E)^\R \ra  \Gamma(E)^\R \xrightarrow{\ev_{\bf x}}  
(E_{\bf x}, c_E)^\R \ra 0.
\eear
Here $\Gamma_{\bf x}(E)^\R $ denotes the subspace of sections which vanish at all the points in $\bf x$, and $(E_{\bf x}, c_E)$ denotes the restriction of $(E,c_E)$ to $\bf x$.  A choice of splitting of  \eqref{split.gamma.E} induces an isomorphism 
\bear\label{gamma.split.val}
\Gamma(E)^\R\cong \Gamma_{\bf x}(E)^\R  \oplus (E_{\bf x})^\R, 
\eear
where $ (E_{\bf x})^\R:=(E_{\bf x}, c_E)^\R$ denotes the real locus of $(E_{\bf x}, c_E)$. 
\begin{rem}\label{R.split} A splitting of \eqref{split.gamma.E} can be obtained as follows. Fix a trivialization of $E$ in a neighborhood of ${\bf x}$ and bump functions $\beta_x$ at each point $x\in \bf x$. Starting with  $\al =(\al_x)_{x\in \bf x}\in (E_{\bf x})^\R$, extend the values $\al_{x}\in E_{x}$ to a constant section of $E$ in a neighborhood of 
$x\in {\bf x}$ and multiply the result by the bump function to get a section $\sum_{x\in \bf x} \beta_x \al_x$ of $E$ with value $\al_{x}$ at $x\in \bf x$. Finally symmetrize it to obtain a real section. 
\end{rem}

Consider next the restriction 
\bear\label{D.prime}
D': \Gamma_{\bf x}(E)^\R  \ra \La^{01}(E)^\R 
\eear
of the real CR operator \eqref{Real.CR} to the subspace in \eqref{split.gamma.E}. Regard $D$ as defined on the right hand side of \eqref{gamma.split.val}, where it becomes the operator
\bear\label{split.off.Ex}
 \Gamma_{\bf x}(E)^\R  \oplus (E_{\bf x})^\R \ra \La^{01}(E)^\R, \quad  (\zeta, \al) \mapsto D'\zeta +\gamma( \al). 
\eear
Here $\gamma: (E_{\bf x})^\R \ra \La^{01}(E)^\R$ depends on the choice of splitting in \eqref{gamma.split.val}. Note that when   the 0'th order term $A$ is supported away from $\bf x$, then the splitting $\gamma$ can be chosen so that its image consists of forms supported away from $\bf x$. After completing in any of the (weighted) Sobolev norms (i)-(iii),  \eqref{split.gamma.E} becomes an exact sequence of Banach spaces and $D$ and therefore $D'$ become Fredholm operators. This induces a long exact sequence
\bear\label{SES.D'}
0\ra \ker D'\ra \ker D \ra (E_{\bf x})^\R \ra \cok D' \ra \cok D \ra 0. 
\eear
The map $(E_{\bf x})^\R\ra \cok D' $ is the projection of $\gamma$ to the cokernel. 
\smallskip

\begin{rem} In fact,  any short exact sequence 
\bear\label{D.fred.comp.D}
\begin{tikzcd} 
0\ar[r]&\E' \ar[d,  "\fD'"] \ar[r]& \E \ar[d,  "\fD"]  \ar[r]& \E''\ar[r]\ar[d, "\fD'' "]&0
\\
0\ar[r]&\F'\ar[r] &\F\ar[r]&\F''\ar[r]& 0
\end{tikzcd} 
\eear
of Fredholm operators induces (by the Snake Lemma) a long exact sequence
\bear\label{LES.gen}
0\ra \ker \fD'\ra \ker \fD \ra \ker \fD'' \ra \cok \fD' \ra \cok \fD \ra \cok \fD'' \ra 0. 
\eear
Examples include the restriction $\fD'$ of a Fredholm operator $\fD$ to a closed, finite codimension subspace, e.g. as in \eqref{SES.D'}.
\end{rem}

\subsection{Conjugating by a section}\label{S.A.conj.s} Consider  next the holomorphic line bundle $\O({\bf x})$ over $(C, c)$ with its natural real structure induced by the action of $c$ on $\bf x$.  This is defined as follows. Fix local holomorphic coordinates on $C$ at the points $x$ in $\bf x$, compatible with the real structure as in \eqref{loc.coord.C.marked}. Let 
\bear\label{sect.s.0}
 s(z)=z \quad\mbox{ for all }z\in U_x \mbox{ and }x\in \bf x, 
 \eear
 and $s(z)= 1$ outside a smaller neighborhood $U'$ of $\bf x$. Then $s$ is a holomorphic section of  $\O({\bf x})$ whose divisor is ${\bf x}$.  The real structure on $C$ induces a real structure 
 on $(\ma\sqcup_{x\in \bf x} U_x)\ti \cx$ and $U'\ti \cx$, and thus on the bundle $\O({\bf x})$, and the section $s$ is equivariant with respect to the real structures.
\smallskip

When $E\ra C$ is a holomorphic bundle (with a compatible real structure $c_E$), multiplication by $s$ gives rise to a short exact sequence of holomorphic sheaves 
\bear\label{o.x.seq} 
0\ra E\otimes\O(-{\bf x})\xrightarrow{\cdot s} E \ra  E_{\bf x} \ra 0
\eear
relating the two $\del$-operators on $E\otimes\O(-{\bf x})$ and $E$; this exact sequence is compatible with the real structures. Therefore it induces a long exact sequence in cohomology 
\bear\label{D.holo.les}
0\ra H^0(E\otimes\O(-{\bf x}))^\R\ra H^0(E)^\R\ \ra (E_{\bf x}, c_E)^\R \ra 
H^1(E\otimes\O(-{\bf x}))^\R \ra H^1(E)^\R \ra 0. 
\eear

Consider next a real CR operator \eqref{Real.CR}, and assume the 0'th order term $A$ vanishes in a neighborhood of the special points of $C$.  Conjugate $D=\del+A$ by $s$ to obtain the operator 
\bear\label{def.Ds}
D^s=s^{-1} D s= \del+ s^{-1} A s.
\eear 
This can be regarded as a real CR-operator on the Real bundle $E\otimes \O(-{\bf x})$ (well defined at least when the 0'th order term $A$ vanishes on a neighborhood of ${\bf x}$). The following standard result for $\del$-operators extends to real CR operators.
\begin{lemma}\label{L.ker.D1=Ds} Multiplication by $s$ induces canonical identifications 
\bear\label{kerD1=Ds}
\ker D^s \cong \ker D' \quad \mbox{ and } \quad \cok D^s\cong \cok D', 
\eear
where $D^s$ and $D'$ are given by \eqref{def.Ds} and \eqref{D.prime}. 
\end{lemma}
\begin{proof} Multiplication by $s$ maps sections of $E\otimes\O(-{\bf x})$ to sections of $E$ which vanish at $\bf x$ so we get a diagram
\bear\label{D.fred.Ds}
\begin{tikzcd} 
0\ar[r]&\Gamma(E\otimes \O(-{\bf x}))^\R\ar[d,  "D^s"] \ar[r, hook, "\cdot s"]&\Gamma_{\bf x}(E)^\R \ar[d,  "D'"] 
\\
0\ar[r]&\La^{01}(E\otimes \O(-{\bf x}))\ar[r, hook, "\cdot s"]& \La^{01}(E)^\R.
\end{tikzcd} 
\eear
After completing in the usual Sobolev norms as above \eqref{ker.D0} and passing to the quotient, we get a short exact sequence $0\ra D^s \ra D'\ra D''\ra 0$ of Fredholm operators, cf. \eqref{D.fred.comp.D}-\eqref{LES.gen}. Note that $\ind D^s= \ind D'$. When if $E$ is a Real line bundle, then $c_1(E\otimes\O(-{\bf x}))= c_1(E)- \sum_{x\in \bf x}1$ thus  $\ind D^s=\ind D - \sum_{x\in \bf x} 1= \ind D'$, cf. \eqref{A.ind.D} and  \eqref{SES.D'}. 

As in the proof of \cite[Lemma~B.5.2]{pardon}, this 
implies \eqref{kerD1=Ds}  as follows. It suffices to check that the induced map (i)  $\ker D^s\ra \ker D'$ is surjective (it is injective by \eqref{LES.gen}), and (ii) $\cok D^s\ra \cok D'$ is injective (surjectivity then follows from (i) and the fact that $\ind D^s=\ind D'$).  
 
This can be verified directly using \eqref{SES.D'} and elliptic regularity. Assume $\xi \in  \ker D'$.  Thus $\xi \in \ker D$ and $\xi$ vanishes at all the points in $\bf x$. Then by elliptic regularity it has an expansion $\xi(z)= \al z(1+O(|z|))$ around a point in $\bf x$, thus $s^{-1} \xi \in \ker D^s$. %

It remains to show that $\cok D^s\ra \cok D'$ is injective. Let $\eta \in \cok D^s$. Since $D^s$ is a real CR operator, then  $\eta\in \ker (D^s)^*$ is  smooth by elliptic regularity. It suffices to show that if $s\eta=D\xi$ where $\xi\in W^{1, p}$ and $\xi(x)=0$ for all $x$ in $\bf x$ then $s^{-1} \xi\in W^{1, p}$. By elliptic regularity $\xi$ is smooth and has an expansion $\xi(z)=\al_1 \ov z + a z(1+O(|z|))$ around each point in $\bf x$. But then $D\xi= \al_1+ O(|z|)$ while $s\eta =bz(1+O(|z|))$ thus $\al_1 =0$. This implies $s^{-1}\xi \in W^{1, p}$ completing the proof.
\end{proof}
One can similarly obtain the exact sequence 
\bear\label{D.s.les}
0\ra \ker D^s\ra \ker D \ra (E_{\bf x}, c_E)^\R  \ra \cok D^s\ra \cok D \ra 0
\eear
generalizing  \eqref{D.holo.les}, which corresponds to $D= \del_{(E, c_E)}$ (i.e. $A=0$).

More generally, fix $\la = (\la_x)_{x\in \bf x}$ a symmetric sequence of positive integer multiplicities. Let $s=s_\la$ be a symmetric section of the Real line bundle $\O(\la \cdot {\bf x})=\O({\textstyle\sum}_{x\in \bf x}\la_x x)$ such that 
\bear\label{conj.sect}
s(z)= z^{\la_x} \quad \mbox{ for all } \quad z\in U_x \mbox{ and $x$ in $\bf x$}, 
\eear 
and $s$ is smooth and nonzero outside $U=\sqcup_{x\in \bf x} U_x$. Then the conjugate operator \eqref{def.Ds} is a real CR operator $D^s$ on 
$E\otimes_\cx \O(-\la \cdot {\bf x})$, and multiplication by $s$ induces a canonical isomorphism 
\bear\label{fD.la}
\ker D^s \cong \ker D^\la,  \quad \cok D^s \cong   \cok D^\la 
\eear
as in Lemma~\ref{L.ker.D1=Ds}. . 
Here 
\bear\label{D.prime.la}
D^\la: \Gamma_{ \la; \bf x}(E)^\R  \ra \La^{01}_{\la-\bf 1; \bf x}(E)^\R 
\eear 
is the restriction of $D=\del+A$ to the subspace $\Gamma_{\la; \bf x}(E)^\R$ of sections which vanish to order $\la$ at $\bf x$ and whose target is the subspace $\La^{01}_{\la-{\bf 1}; \bf x}(E)^\R$ of (0,1)-forms which vanish to order $\la-\bf 1$ at $\bf x$; it becomes a Fredholm operator $\cal D^\la$  after completing in the usual Sobolev norms for $k$ sufficiently large (so that $W^{k, p}\hookrightarrow C^{\la_x}$ for all $x\in \bf x$).  As in \eqref{ker.D0}, these completions $\D^\la$ have the same kernel and cokernel. The $\la=\bf 1$ case  of \eqref{fD.la} corresponds to \eqref{kerD1=Ds}; for $\la>\bf 1$,  \eqref{fD.la} follows by induction from \eqref{D.fred.Ds} with $D$ replaced by $D^{s_{\la -\bf 1}}$.
\begin{rem}\label{R.other.ker} The kernel and cokernel of the restriction $D^\la$ of $D$ 
consists of those elements  of $\ker D$ and $\cok D=\ker D^*$ which vanish to order $\la$ and respectively $\la-\bf 1$ at $\bf x$. There are other Fredholm completions of \eqref{D.prime.la} that have the same kernel and cokernel, cf. \S\ref{S.A.weighted}. 
\end{rem}

\subsection{The normalization short exact sequence} The situation is similar in the case $C_0$ is a nodal symmetric curve. Let ${\bf y}=\{y_1, \dots, y_\ell\}$ denote the collection of nodes of $C_0$.  Let  $\wt C$ denote the normalization of $C_0$, and let $y_{i1}$, $y_{i2}$ be the two points of $\wt C$ which are the preimage of the node $y_i$. Then $\wt C$ has a symmetric subset $\wt {\bf y}$  of marked points, where  
\best
 {\wt {\bf y}= {\bf y}_1\sqcup {\bf y}_2,\quad \text{and} \quad {\bf y}_k=\{ y_{1k}, \dots, y_{\ell k}\}, \quad \text{ for } k=1, 2}.
\eest

If $(E, c_E)\ra (C_0, c)$ is a Real bundle, let $(\wt E, c_{\wt E})$ denote the pullback bundle. Then we have a normalization short exact sequence
\bear\label{SES.E}
0\lra (E, c_E) \lra (\wt E, c_{\wt E})  \lra (E_{{\bf y}}, c_E)\lra 0.
\eear
It induces a short exact sequence
\bear\label{split.gamma}
0\ra \Gamma(C_0; E)^\R \ra  \Gamma(\wt C, \wt E)^\R \ra  (E_{\bf y}, c_E)^\R \ra 0, 
\eear
recording the fact that a continuous section of $E$ is a section of $\wt E$ that has the same value on the pair of points of $\wt C$ that correspond to a node of $C_0$. Choosing a splitting of \eqref{split.gamma} provides an isomorphism
\bear\label{split.gamma.chose}
 \Gamma(\wt C, \wt E)^\R  \cong \Gamma(C_0; E)^\R \oplus  (E_{{\bf y}})^\R
\eear
as before. Then similarly one can relate a real CR operator  
\bear\label{A.D.0.gen}
D_0: \Gamma(C_0; E)^\R \ra \La^{01}(C_0; E)^\R, \quad D_0\xi = \del \xi + A_0(\xi) 
\eear
on $E$ to the pullback operator $\wt D$ on $\wt E$. In particular, we get a long exact sequence 
\bear\label{D.wt.les}
0\ra \ker D_0\ra \ker \wt D \ra (E_{\bf y})^\R \ra \cok D_0\ra \cok \wt D \ra 0. 
\eear
\begin{rem}\label{R.split.norm} A splitting of \eqref{split.gamma.chose} can be obtained by first lifting $v\in E_{\bf y}$ to the normalization as 
$(v, -v)\in \wt E_{{\bf y}_1}\oplus  \wt E_{{\bf y}_2}$ and then proceeding as in Remark~\ref{R.split} to obtain the section  of $\Gamma(E)^\R$ supported in a neighborhood of the marked points ${\bf y}_1\sqcup {\bf y}_2$ of $\wt C$ and taking opposite values on any pair of marked points that correspond to a node of $C_0$. 
\end{rem}
\subsection{Variations in the domain}\label{S.var.domain} 
Recall from  \cite[p. 721]{gz} that the Kodaira-Spencer deformation theory canonically identifies the tangent space of the real Deligne-Mumford moduli space at a stable symmetric curve $C$ with
\bear\label{T.DM}
T_C \ov \M^\R\cong H^1(\T_{C})^\R 
\eear
(after passing to covers in the presence of automorphisms); see  also \cite[\S3]{melissa}. Here $\T_C$ is the relative tangent bundle to $C$, cf. \S \ref{S.rel.tan.bd}. In fact, as in \cite[\S4.1]{ip-gv}, the variation in $C$  as a marked (symmetric) curve can be regarded as a (symmetric) $(0, 1)$-form $h$ with values in $\T_C$ and supported away from the special points of $C$ (here $h$ is the variation $\de j$ in the complex structure $j$). This gives a diagram
\bear\label{T.slice.app}
\begin{tikzcd}
T_C \ov \M^\R\ar[r, hook] \ar[rd, swap, "\cong"]& \La^{01}(\T_C)^\R \ar[d, "\pi_{\cok}"]
\\
&  H^1(\T_{C})^\R
\end{tikzcd}
\eear
where the vertical arrow is the projection onto the cokernel $H^1(\T_{C})^\R$ of the $\del$ operator on $(\T_C, c_\T)$. 

\begin{rem}\label{R.t=var.coord} Assume $C$ is a stable symmetric curve with marked points ${\bf x}=\{ x_1, \dots, x_m\}$ and let $(\T_C)|_{x_i}$ denote the restriction of $\T_C$ to $x_i$.  As $C$ varies in the real Deligne-Mumford moduli space,  $(\T_C)|_{x_i}$ defines a complex orbibundle denoted $\T_{|x_i}$ over the moduli space, equal to the pullback of the relative tangent bundle $\T$ on the universal curve via the section 
$C \mapsto (C, x_i)$; see \S\ref{S.rel.tan.bd}. An element $(C, v_i)\in \T_{|x_i}\setminus 0$ can be regarded as a curve $C$ together with a germ of a local holomorphic coordinate $z_i$ at $x_i$. Moreover, the exact sequence \eqref{o.x.seq} for $E= \T_C$ induces the exact sequence 
\bear\label{var.in.holo.coord.smooth}
\begin{tikzcd}
0 \ar[r]& (\T_{|\bf x})^\R \ar[r]& H^1(\T_{C}\otimes\O(-{\bf x}))^\R \ar[r]& H^1(\T_{C})^\R  \ar[r]& 0, 
\end{tikzcd} 
\eear
cf. \eqref{D.holo.les}.  The last term parametrizes the variation in the marked symmetric curve $C$, cf. \eqref{T.DM}. The middle term in \eqref{var.in.holo.coord.smooth} encodes the variation in $(C, v)$, where $v= (v_x)_{x\in \bf x}$ consists of germs of holomorphic coordinates at the marked points compatible with the real structure as in \eqref{loc.coord.C.marked}. The first term encodes just the variation in $v$ (with $C$ fixed).  
\end{rem} 

Assume next $C_0$ is a nodal curve, let $\bf y$ denote its collection of nodes and $\wt C$ its normalization. Then the normalization short exact sequence for the holomorphic bundle $E=\T_{C_0}$ induces the long exact sequence 
\bear
\label{tan-norDM.ses}
0\ra H^0(\T_{C_0})^\R \ra H^0(\T_{\wt C})^\R \ra  (\T_{|\bf y})^\R \ra H^1(\T_{C_0})^\R \ra H^1(\T_{\wt C})^\R\ra 0.
\eear
% Here $\T_{|\bf y}=\oplus_{y\in \bf y} \T_{|y}$ where $\T_y$ is the relative tangent bundle at the node $y\in \bf y$.

When $C_0$ is stable so is $\wt C$. Therefore both $H^0$ terms vanish, and the $H^1$ terms model the tangent spaces to the real Deligne-Mumford spaces containing $C_0$ and respectively
$\wt C$, so \eqref{tan-norDM.ses} reduces to
\bear\label{SES.T.h1}
\begin{tikzcd}
0 \ar[r]&  (\T_{|\bf y})^\R \ar[r]& H^1(\T_{C_0})^\R \ar[r]& H^1(\T_{\wt C})^\R\ar[r]& 0.
%&&T_{C_0} \ov\M^\R \ar[r] &T_{\wt C}\ov\M^\R. 
\end{tikzcd} 
\eear
The last term parametrizes variations in $C_0$ that do not smooth its nodes, while the middle one parametrizes all variations in $C_0$, including those that smooth the nodes. A choice of splitting of the short exact sequence \eqref{SES.T.h1} and the isomorphism \eqref{T.DM} for $C=C_0$ and $\wt C$  determines a decomposition
\bear
\label{tan-norDM}
T_{C_0} \ov \M^\R=  (\T_{|\bf y})^\R \oplus  T_{C_0} \ov \N^\R. 
\eear
Here $\ov \N^\R$ denotes the stratum containing $C_0$, and $(\T_{|\bf y})^\R$ can be regarded as its normal bundle. Moreover, those variations in $C_0$ that map to 0 in $H^1(\T_{\wt C})^\R$ via \eqref{SES.T.h1} can be regarded as variations in the gluing parameters for a family \eqref{tau.tauC.A} of versal deformations of $C_0$ smoothing the nodes. In local coordinates, an element of 
$(\T_{|\bf y})^\R$ can therefore be regarded as a variation $\de \tau$ in the gluing parameter $\tau$ (or in cylindrical coordinates as the variation $\frac{\de \tau}\tau=\de (\log \tau)$), for example via similar considerations as those in Remark~\ref{R.t=var.coord}. 

\begin{rem} \label{R.split.T} In \eqref{SES.T.h1}, $H^1(\T_{C_0})^\R$ and $H^1(\T_{\wt C})^\R$ are the cokernels of the $\del$-operator on the relative tangent bundle to $C_0$ and respectively $\wt C$; the former operator is the restriction of the latter to a subspace. As in \eqref{D.wt.les}, the 
second arrow in \eqref{SES.T.h1} is the projection onto the cokernel of $\del_{(\T_{C_0}, c)}$ of 
\bear\label{h.map}
h: (\T_{|\bf y})^\R \ra \La^{01}(\T_{\wt C})^\R=\La^{01}(\T_{C_0})^\R,
\eear
where $h(v)=\del_{(\T_{\wt C}, c)} r(v)$ for some choice of splitting 
$r:(\T_{|\bf y})^\R  \ra  \Gamma(\T_{\wt C})^\R$ of \eqref{split.gamma.chose} for $E=\T_{C_0}$ as in   Remark \ref{R.split.norm}. 
\end{rem}
%%%%%%%%%%%%%%
\subsection{Weighted Sobolev norms and real CR operators}\label{S.A.weighted} 
Let $C$ be a smooth curve. Fix a subset  $\bf y$ of marked points of $C$, and $\la=(\la_y)_{y\in \bf y}$ a collection of \textsf{multiplicities} associated to $\bf y$, where $\la_y\in \Z\setminus 0$ for all $y$ in $\bf y$. Fix $z=e^{-(t+i\theta)}$ a local holomorphic coordinate at each point $y$ in $\bf y$. Let 
\best
C'= C\setminus {\bf y}
\eest
regarded as a manifold with infinite cylindrical ends parametrized by $(t, \theta)\in[0,\infty)\ti S^1$, one for each point $y\in \bf y$. Fix a weight function $\rho$ and a cylindrical metric $g'$ on $C'$ such that 
\bear\label{rho.cyl}
\rho=|z|= e^{-t},    \quad g' = dt^2+d\theta^2 
\eear
in the neighborhood $U_y$ of each point $y\in \bf y$ and $\rho>0$ elsewhere; denote by $U'_y\subset U_y$ a slightly smaller neighborhood of $y$. When $E$ is a bundle over $C$, let 
\bear\label{wSob.maps.la.B}
\| \zeta\|_{k, p, \de; \la}^p= \int_{C\setminus U_y'}\l(\sum_{|m|\le k} |\nabla^m \zeta |^p\r) d vol_{g'} + \sum_{y\in \bf y} \int_{C' \cap U_y}\l(\sum_{|m|\le k} |\rho^{1-\la_y-\de} \nabla^m \zeta |^p\r) d vol_{g'}
\eear
denote the $\la$-\textsf{weighted Sobolev norm} on $\Gamma(E)$, associated to $k\ge 1$, $p\ge 2$, weight $\de\in\R\setminus \Z$ and the fixed multiplicities $\la=(\la_y)_{y\in \bf y}$.  The $\la$-norms on $\La^{01}(E)= \Gamma(\Omega^{01}_{C} \otimes_\cx E)$  are defined by the same formula. These norms are defined with respect to a fixed metric and compatible connection on $E$, and the cylindrical metric $g'$ and its Levi-Civita connection on $TC'= TC|_{C'}$. 

When $(C, c)$ is a symmetric curve and $(E, c_E)$ is a Real bundle over it, the norms on $\Gamma(E)^\R$ and  
$ \La^{01}(E)^\R$ are defined by the same formulas by using throughout choices compatible with the real structure, i.e. (i) symmetric weight functions $\rho$ and multiplicities $\la=(\la_y)_{y\in \bf y}$ associated to a symmetric collection of points $\bf y$ and 
(ii) symmetric hermitian metrics and connections on $E$ and $TC'$, with symmetric cylindrical ends. 
\medskip

Let $D: \Gamma(E)^\R \ra \La^{01}(E)^\R$, $D= \del+ A$ be a real CR operator on $(E, c_E)$. Completing in the 
$\la$-norms \eqref{wSob.maps.la.B} gives rise to an operator
\bear\label{D1.weighted.B}
\fD^\la:W^{k, p, \de}_{\la; \bf y}\Gamma(C'; E)^\R \ra W^{k-1, p, \de}_{\la; \bf y}\La^{01}(C';E)^\R,  \qquad \fD = \del+ A
\eear
on the punctured curve $C'=C\setminus \bf y$, which is Fredholm as in Lockhart and McOwen \cite[Theorem 6.2]{LM} since $\de\not\in \Z$ is not an eigenvalue of the asymptotic linearized operator on any end (the eigenvalues are $\Z$, corresponding to the powers $z^n$ on the punctured disk). When $(E, c_E)$ is a line bundle and $\de\in (0,1)$, the index of \eqref{D1.weighted.B} is equal to 
\bear\label{ind.D1}
\ind \fD^\la= c_1(E)[C]+ \tfrac 12 \chi(C)- \sum_{y\in \bf y} \la_y. 
\eear

\smallskip

Assume next that the multiplicities $\la$ are positive, and fix $\de\in (0,1)$.  Then \eqref{D1.weighted.B} can be regarded as an operator on the closed curve $C$ as follows. Restrict to $k\ge 1, \;p\ge 4$ (or else $k\ge 2$ and $p\ge 2$) so that $W^{k,p}_{loc}\hookrightarrow C^0_{loc}$ by the Sobolev embedding theorem.  Then sections of $E|_{C'}$ with finite $\la$-norm are continuous on $C'$ and exponentially decay on the ends, thus extend continuously to sections of $E$ on $C$ which vanish to order $\la$ at $\bf y$. Let 
\bear\label{D.la.B}
D^\la: \Gamma_{\la; \bf y}(E)^\R \ra  \La^{01}_{\la-\bf 1; \bf y}(E)^\R
\eear
denote the restriction of $D=\del+A$ defined as in \eqref{D.prime.la}. 
Note that on $C$, smooth sections of $E$ have finite $\la$-norm if and only if they vanish to order $\la$ at $\bf y$; however, since $|dz|=|z|$ in cylindrical coordinates, (0,1)-forms with finite 
$\la$-norm only vanish to order $\la-\bf 1$ at $\bf y$. Fixing $\de\in (0,1)$ and completing \eqref{D.la.B} in the same $\la$-norms as \eqref{D1.weighted.B} gives rise to a Fredholm operator 
\bear\label{D.la2.B}
D^\la:\E^{k,p} \ra \fF^{k-1,p}
\eear
on the closed curve $C$, canonically identified with \eqref{D1.weighted.B}.  By elliptic regularity its kernel and cokernel are independent of $\de\in (0,1)$ and $k, p$ in the range above, and consist of those smooth elements in the kernel and respectively cokernel of $D$ with finite $\la$-norm. In particular, the kernel and cokernel of the Fredholm completion of \eqref{D.la.B} in these $\la$-norms is the same as that of the Fredholm completion in the usual Sobolev norms (for $k$ sufficiently large) considered in \S\ref{S.A.conj.s}. 

%%%%%%%%%%%%%
\subsection{Linearizations for smooth targets}\label{S.lin.smooth} The considerations above allow us to regard the linearization to the relative moduli space $ \ov\M^\R(\Si, V)$ 
in several equivalent ways. Assume $f:C \ra \Si$ is an element of the relative moduli space $\ov\M^\R(\Si, V)$, and that $f$ has no rubber components. Then by definition it is an element of the absolute moduli space $\ov\M^\R(\Si, \emptyset)$ which has a certain branching over $V$, where $f^{-1}(V)$ consists only of marked points of the domain. This means that in local coordinates on the target around $x\in V$ and on the domain in the neighborhood of each marked point $y\in f^{-1}(x)$, the map $f$ has an expansion  
\bear\label{lead.coef.f}
f(z)= a_y z^{\la(y)}\wh f(z)\,,  \quad \mbox{ where } \quad \wh f(z)= 1+ O(|z|), 
\eear
$a_y\ne 0$ is the leading coefficient of $f$ at $y$, and $\la(y)>0$ is the contact multiplicity. Intrinsically, the leading 
coefficient $a_y(f)$ of $f$ at $y$ is an element 
\bear\label{lead.coef.f.1}
a_y(f)\in (T_yC)^{-\la(y)}\otimes_\cx T_{f(y)} \Si. 
\eear
As $f:C\ra \Si$ varies in $\ov\M(\Si, V)^\R$, this determines a section $a_y$ of the complex line bundle 
\bear\label{lead.coef.f.2}
(\T_y) ^{-\la(y)}\otimes_\cx \ev_y^*T\Si 
\eear
cf.  \cite[Lemma~7.1]{ip-sum}; note that $f(y)=x\in V$, thus the second factor in \eqref{lead.coef.f.2} is trivial.
\smallskip

Conditions (i)-(iv) of Definition~\ref{D.moduli.rel.or} imply that a local variation in $f:C \ra \Si$ with fixed domain and target (including fixed marked points) also fixes the collection of contact points $\bf y$ and their contact multiplicity $\la$. Variations $\de f$ in $f$ along $\ov \M^\R(\Si,V)$ will vanish to order $\la(y)$ at $y$, and in fact will continue to have an expansion  of the form \eqref{lead.coef.f}. The leading coefficient of the expansion of $\de f$ at $y$ is the variation $\de a_y$ in the leading coefficient of $f$ at $y$ (assuming the coordinate systems are fixed). In particular, variations $\de f$ which vanish to next order at all the contact points fix the leading coefficients of $f$. Of course, if a smooth function $f$ vanishes of order $\la(y)$ at the point $y$, then $\del f$ vanishes to order $\la(y)-1$.
\medskip

Fix a map $f$ with contact multiplicity $\la$ and contact points $\bf y$. Denote by $\Gamma_{\la; \bf y} (f^*T\Si)^\R$ the subspace of sections that vanish to order $\la(y)$ at $y$ for $y\in \bf y$, as in \eqref{D.prime.la}. Then the linearization $L_f$ to $\ov\M^\R(\Si, V)$ is the restriction 
\bear\label{L.f.A.non}
L_f:\Gamma_{\la; \bf y} (f^*T\Si)^\R\oplus T_C \ov \M^\R \ra \La^{01}_{\la-\bf 1; \bf y}(f^*T\Si)^\R
\eear
of the "usual" linearization
\begin{equation}\label{A.lin.abs}
\begin{split}
&{\mathbb L}_f: \Gamma(f^*T\Si)^\R\oplus T_C \ov \M^\R \ra \La^{01}(f^*T\Si)^\R
\\
&{\mathbb L}_f(\xi, h)= \del \xi+ [\nabla_\xi \nu + \tfrac 12 J df h]^{01}. 
\end{split}
\end{equation}
This follows as in the proof of \cite[Lemma~4.2]{ip-rel}, which also applies in our case by working with symmetric choices throughout. Specifically, 
 fix $\de\in (0,1)$ and restrict to $k\ge 1, \;p\ge 4$ (or else $k\ge 2$ and $p\ge 2$). After passing to completions as in \eqref{D.la2.B}, the linearization \eqref{L.f.A.non} to $\ov\M^\R(\Si, V)$ at a map $f$ with contact multiplicity $\la$ becomes the Fredholm operator
\begin{equation}\label{lin.rel.norms.weighted}
\begin{split}
&\L_f: \E_f^{k,p}\oplus T_C \ov \M^\R \ra \F_f^{k-1,p} 
\\
&\L_f (\xi, h)=  \del \xi+ [\nabla_\xi \nu + \tfrac 12 J df h]^{01} \; = \;  \del_{f^*(T\Si, dc)} \xi+ \hat A_f(\xi)+ \hat b_f(h )
\end{split}
\end{equation}
obtained by completing \eqref{L.f.A.non} using the $\la$-weighted Sobolev norms of \S\ref{S.A.weighted}, cf. \cite[p.~971]{ip-sum}, as corrected in \cite[eq. (1.1) of Corrigendum]{ip-sum}. Here 
\best
\hat A_f \in \Hom_\R( f^*T\Si, \La^{01}_C \otimes_\cx f^*T\Si)^\R 
\eest
is a (symmetric) 0'th order term given by $\hat A_f(\xi)=[\nabla_\xi\nu]^{01}$, while $\hat b_f: T_C \ov \M^\R\ra  \La^{01}(f^*T\Si)^\R$ is induced by the map $h\mapsto \tfrac 12 J df h$. Here $h$ is a $(0,1)$-form with values in the relative tangent bundle to $C$ which vanishes in a sufficiently small neighborhood of the special points of $C$ 
(a variation in $C$ can be regarded as such a form $h$ via \eqref{T.slice.app}, as reviewed in \S\ref{S.var.domain}). 

For a map $f$ with contact multiplicity $\la$, consider (a) the completion \eqref{lin.rel.norms.weighted} of $L_f$ for $k$, $p$ in the range above and fixed $\de\in (0,1)$ or  (b) the completion $\cal L_f$ of $L_f$ in the usual Sobolev norms as above \eqref{ker.D0}, but with $k\gg 0$. Then as in \eqref{D.la.B}-\eqref{D.la2.B}, $\ker \L_f$ and $\cok \L_f$ are independent of these choices, and are denoted
\bear\label{A.L.complete}
\ker L_f =\ker \L_f \quad \text{and} \quad \cok L_f=\cok \L_f.
\eear
\begin{rem}\label{R.dim.counts} The linearization \eqref{L.f.A.non} to the real relative moduli space $\ov\M^\R(\Si, V)$ is precisely the restriction to the invariant part of the domain and target of the linearization to the (usual) relative moduli space $\ov\M(\Si, V)$ defined in \cite{ip-rel}. In particular, the index of the former is half the index of the latter. Thus the virtual dimension counts of \cite[Lemmas~7.5-7.6]{ip-rel} directly translate into counts for the moduli spaces $\ov\M^\R(\Si, V)$ considered in this paper. In particular,  the virtual dimension 
of the moduli space $\ov\M^\R_{d, \chi, \vec \mu} (\Si, V)$ is  
\begin{align}\label{A.v.dim=b} 
b&=c_1(T\Si)A +\tfrac 12 \chi-V \cdot A +\dim \ov\M^\R_{\chi, \ell(\vec \mu)}=d \chi(\Si\setminus V)- \chi + 2\ell (\vec \mu)
\\ \nonumber
&= d \chi(\Si) - \chi +2\de(\vec \mu).
\end{align}
Here $A=f_*[C]=d[\Si]\in H_2(\Si)$ and $V$ has $2r$ points, so $V \cdot A= 2dr$, while $\dim \ov\M^\R_{\chi, \ell(\vec \mu)}= -\tfrac 32 \chi+2\ell(\vec \mu)$.
\end{rem} 

An equivalent description of the linearization at $f$ is obtained as follows. Consider an invariant section $\si$ of $\O({\bf x})$ on the target $\Si$ whose 
divisor is  $V=\bf x$ and such that $\si$ is holomorphic in a neighborhood of $\bf x$, and smooth and nonzero elsewhere. Pull it back to get a section $s=f^*\si$ of $f^*\O({\bf x})$ which vanishes to order $\la$ at the points 
${\bf y}$ and nowhere else. Conjugating \eqref{A.lin.abs} by $s$ as in \S\ref{S.A.conj.s} and \S\ref{S.A.weighted} allows us to regard the linearization to $\ov\M^\R(\Si, V)$ as 
\bear\label{lin.f.rel.A}
D_f: \Gamma(f^*\T_\Si)^\R \oplus T_C \ov \M^\R \ra \La^{01}(f^*\T_\Si)^\R
\eear
given by the formula
\bear\label{lin.f.rel.form.A}
D_f(\xi,h)\;=\; \del_{f^*(\T, c_\T)} \xi +  s^{-1}[\nabla_{s\xi} \nu + \tfrac 12 J df h]^{01} \; = \;  \del_{f^*(\T, c_\T)} \xi + A_f(\xi)+ b_f(h). 
\eear
By construction both $\nabla \nu$ and $h$ vanish in a sufficiently small neighborhood $U$ of the special points of the domain, so we can chose the section $\si$ (depending on $f$) so that $s=f^*\si$ is identically 1 outside this neighborhood. In that case \eqref{lin.f.rel.A} is given by exactly the same formula as $L_f$, except that $T\Si$ is replaced by $\T_\Si$, cf. \eqref{T.punctured}. 

Note that the linearization $D_f$ is a 0'th order perturbation of $\del_{(E, c_E)}\oplus 0$, where 
$\del_{(E, c_E)}$ is a real CR operator \eqref{Real.CR} on the Real bundle $(E, c_E)=f^*(\T_\Si, c_\T)$.  Let $\D_f$ denote the Fredholm operator obtained by completing $D_f$ in the usual Sobolev norms or any other choice from those listed above \eqref{ker.D0}. Then as in  \eqref{ker.D0}, $\ker \D_f$ and $\cok \D_f$ are independent of these choices, and are denoted $\ker D_f$ and $\cok D_f$.

 \begin{lemma} \label{L.all.lin.sameA}  Dividing by $s$ induces an identification 
\bear\label{ker.L=D.A}
\ker L_f \cong \ker D_f \quad \mbox{ and }\quad \cok L_f \cong  \cok D_f,
\eear 
canonical up to homotopy in $s$. Thus either one of these linearizations locally describes the moduli space $\ov\M^\R(X,V)$ around $f$. This induces canonical 
 (up to homotopy) identifications between the determinant lines 
\bear\label{det.L=D}
\det L_f \ma \cong_{ \eqref{ker.L=D.A}} \det D_f \ma \cong_{ \eqref{lin.f.rel.form.A}} \det (\del_{f^*(\T_\Si, c_\T)}\oplus 0)=  \det \del_{f^*(\T_\Si, c_\T)}\otimes \mathfrak f^* \det  T \ov \M^\R.
\eear
\end{lemma}
\begin{proof} Consider the operator $\mathbb L_f$ given by \eqref{A.lin.abs}. Up to 0'th order terms,  $\mathbb L_f$  is the same as $\del_{(E, c_E)}\oplus 0$, where $\del_{(E, c_E)}$ is a real CR operator \eqref{Real.CR} on the bundle $(E, c_E)=f^*(T\Si, dc)$. The linearization $L_f$ is the restriction of $\mathbb L_f$ to subspaces of sections which vanish to order $\la$ cf. \eqref{L.f.A.non} while $D_f$ is the conjugate operator 
$s^{-1}{\mathbb L}_f s$ cf. \eqref{lin.f.rel.form.A}. Therefore \eqref{ker.L=D.A} follows the same way as  \eqref{fD.la}. 

Since up to 0'th order terms $D_f$ is the same as $\del_{f^*(\T_\Si, c_\T)}\oplus 0$ cf. \eqref{lin.f.rel.form.A},  the second identification in \eqref{det.L=D} follows as in \cite[(3.3) and \S4.3]{gz} with $E=TX$ replaced by $E=\T_\Si$.
\end{proof}

\begin{rem}\label{R.A.lead.term.f} Under the transformation $\xi \mapsto  \frac1 {f^*\si} \xi $, a variation $\de f$ in $f$ (regarded as a section of $f^*T\Si$) is mapped to a section $\xi= \frac{\de f } {f^*\si}$ of $f^*\T_\Si$; the leading coefficient of $\de f$ at $y$ is mapped to the value
\bear\label{val.y}
\xi(y)= \frac{\de a_y } {a_y}=\de (\log a_y)
\eear
where $\de a_y$ denotes the variation in the leading coefficient $a_y$ of $f$ at $y$, cf. \eqref{lead.coef.f}. 
\end{rem}

We can also separate the leading coefficient of the variation in $f$ as in  
\cite{ip-sum}. For the linearization \eqref{lin.f.rel.A}, this corresponds to separating the value 
$\xi(y)\in (f^*\cal T)_y$ at each contact point $y$,  which is given by \eqref{val.y}. Specifically, using a decomposition such as in \eqref{split.off.Ex} for $E= f^*\T_\Si$ over $C$ and ${\bf x^+}$ replaced by the contact points ${\bf y^+}$,  the linearization \eqref{lin.f.rel.A} rewrites as the operator 
\begin{equation}\label{D.lin.push.a}
\begin{split}
&{\bf D}_f: \Gamma_{\bf y}(f^*\T_\Si)^\R \oplus (f^*\T_{\Si})_{\bf y^+} \oplus T_C \ov \M^\R \ra \La^{0,1}(f^*\T_\Si)^\R
\\ 
&{\bf D}_f(\zeta, \al, h)= D_f(\zeta, h)+ \gamma(\al). 
\end{split}
\end{equation}
Here $\zeta \in \Gamma(f^* \T_{\Si})^\R$ gives rise to a variation in $f$ with fixed domain and target and also fixed leading coefficient \eqref{lead.coef.f} at each one of the marked points, while the  middle term records the variation in the leading coefficients, cf. \eqref{val.y}. The term  $\gamma: (f^*\T_{\Si})_{\bf y^+} \ra \La^{01}(f^*\T_\Si)^\R$ depends on the choice of splitting, see  Remark~\ref{R.split}. 
\subsection{Linearizations for nodal targets}\label{S.lin.nodal} The discussion above extends to the case where the target is a nodal curve $\Si_0$ with complex conjugate nodes; here we are discussing the case when the target has a single pair of complex conjugate nodes $x^\pm$. Consider a continuous real map $f_0:C_0 \ra \Si_0$ such that the preimage of $x^+$ consists only of nodes of the domain. Denote by 
${\bf y^+}=\{y_1^+, \dots y_\ell^+\}$ the preimage of the node $x^+$ of the target, and let $\bf y={\bf y^+} \sqcup {\bf y^-}$ where $y_i^-=c(y^+_i)$. Then $\bf y$ is a subset of nodes of $C$, and let $\wt C$ be the (partial) normalization of $C$ resolving the nodes in $\bf y$; let  $\wt \Si$ be the normalization of $\Si_0$.  Then $f_0:C_0\ra \Si_0$ lifts to a continuous real map 
\bear\label{f.lift.A}
\wt f:\wt C \ra \wt \Si
\eear 
with smooth target (but possibly nodal domain). When $f_0:C_0\ra \Si_0$ is an element of the real relative moduli space 
$\ov \fM(\Si_0)$ of \eqref{mo.over}, then (local) variations 
in $f_0$ can be described in terms of those of its lift \eqref{f.lift.A}. 

We can thus consider the linearization at $f_0:C_0 \ra \Si_0$ to $\ov \fM(\Si_0)$ (with fixed target and fixed $\nu$) induced by the linearization 
 \bear\label{D.wt.f.A}
 D_{\wt f}: \Gamma(\wt f^* \T_{\wt \Si})^\R  \oplus T_{\wt C} \M^\R \ra 
\La^{0,1}(\wt f^* \T_{\wt \Si})^\R 
\eear
at a lift $\wt f: \wt C \ra \wt \Si$ of $f_0$ to the normalizations. This has the form 
\bear\label{D.wt.form.A}
 (\xi, h)\mapsto  \del \xi+ A_{\wt f}(\xi)+ b_{\wt f}(h)
 \eear 
 cf. \eqref{lin.f.rel.form.A} with $f$ replaced by $\wt f$.  
 
 Using \eqref{T.SES} and the normalization short exact sequence  \eqref{SES.E} for 
$E=f_0^* \T_{\Si_0}$ we can rewrite  \eqref{D.wt.f.A} as a map
\bear\label{lin.f.nodal.A}
\wt D_{f_0}: \Gamma(f_0^* \T_{\Si_0})^\R \oplus ( f_0^*\T_{\Si_0})_{|\bf y^+}  \oplus T_{\wt C} \M^\R \ra 
\La^{01}(\wt f^* \T_{\wt \Si})^\R
\eear
An element $\zeta \in \Gamma(f_0^* \T_{\Si_0})^\R$ corresponds to a variation in $f_0$ with fixed domain, target, and also fixed product of the leading order terms at each one of the nodes in the inverse image of $x^+$, while the middle term records the variation in the product of leading coefficients (or in cylindrical coordinates the variation $\frac {\de (a_1a_2)}{a_1 a_2}=  \frac {\de a_1}{a_1}+ \frac {\de a_2}{a_2}$). In particular, the linearization \eqref{lin.f.nodal.A} has the form 
\bear\label{D.nod.up.A}
\wt D_{f_0}(\zeta, \al, h)=  \del \zeta + A_{f_0}(\zeta)+\gamma(\al)+ b_{f_0}(h) 
\eear
where $A_{f_0}$ is the restriction of $A_{\wt f}$  to $\Gamma(f_0^* \T_{\Si_0})^\R$, and $b_{f_0}$ is equal to $b_{\wt f}$ cf. \eqref{D.wt.form.A}. Here $\gamma$ depends on the choice of splitting of  \eqref{SES.E} for $E=f_0^* \T_{\Si_0}$; such a splitting can be obtained for example by pulling back a splitting for $E=\T_{\Si_0}$. The last term $b_{f_0}=b_{\wt f}$ is induced by the map
\bear\label{free.b}
\La^{01}(\wt C; \T_{\wt C})^\R \ra \La^{01}(\wt C; \wt f^* \T_{\wt \Si})^\R
\eear
obtained from $h\mapsto \tfrac 12 J d f h$ after dividing by the section $\wt s=\wt f^*\si$ and projecting to the (0,1)-part.
\medskip

There is another perspective for the linearization at $f_0$ that is better suited when deforming the nodal target by smoothing its nodes. For that consider instead the operator 
\begin{equation}
\label{lin.f.nod1.A}
\begin{split}
&D_{f_0}: \Gamma(f_0^* \T_{\Si_0})^\R \oplus T_{C_0} \ov \M^\R \ra \La^{01}(\wt f^* \T_{\wt \Si})^\R
\\
& D_{f_0}(\zeta, h)= \del \zeta + A_{f_0}(\zeta)+b_{f_0}(h)
\end{split}
\end{equation}
where $A_{f_0}$ is as in \eqref{D.nod.up.A} and $b_{f_0}$ is induced by the same formula \eqref{free.b}.  However here, unlike in \eqref{D.nod.up.A}, the variation $h\in \La^{01}(\T_{C_0})^\R$ is tangent to the entire Real Deligne-Mumford moduli space, not just to the nodal stratum containing $C_0$. Using the exact sequence \eqref{SES.T.h1} and a decomposition of  \eqref{tan-norDM}, this operator can therefore be identified with
\begin{equation}
\label{lin.f.nod2.A}
\begin{split}
&\wh D_{f_0}: \Gamma(f_0^* \T_{\Si_0})^\R \oplus \T_{|\bf y^+}  \oplus T_{\wt C} \M^\R 
 \ra \La^{01}(\wt f^* \T_{\wt \Si})^\R
\\
&\wh D_{f_0}(\zeta,v, h)= \del \zeta + A_{f_0}(\zeta)+\wh \gamma(v)+ b_{f_0}(h). 
\end{split}
\end{equation}
Here $A_{f_0}$ and $b_{f_0}$ are exactly as in \eqref{D.nod.up.A}, but $\wh \gamma: \T_{|\bf y^+}  \ra  \La^{01}(\wt f^* \T_{\wt \Si})^\R$ depends on the choice of splitting 
of \eqref{split.gamma} for $E=\T_{C_0}$, while \eqref{D.nod.up.A} depends on one for $E=\T_{\Si_0}$. Specifically, $\wh \gamma$ is the composition of \eqref{free.b} and \eqref{h.map}. 

The two forms  \eqref{D.nod.up.A} and \eqref{lin.f.nod2.A} of the linearization at $f_0$ are in fact equivalent: using the formulas for $\gamma$ and $\wh \gamma$ in terms of the two splittings one sees that the linearization \eqref{lin.f.nod2.A} is the same as the linearization \eqref{lin.f.nodal.A}. Their middle terms are identified via the complex linear isomorphism induced by the linearization of \eqref{lead.coef.iso}, given by 
\bear\label{a,to.tau}
 \frac {\de a_y}{a_y}\mapsto -\la(y) \frac {\de \tau_y}{\tau_y}.
\eear
Here $a_y$ denotes the product of the leading coefficients of $f_0$ at $y$, while $\tau_y$ can be regarded as the gluing parameter of the domain at the node $y$. 
\begin{rem}\label{R.lead.coord} The leading coefficients depend on the germs of holomorphic coordinates used, as do the gluing parameters. The product $a_y$ induces the isomorphism \eqref{lead.coef.iso}, relating the gluing parameter in the target with a power of that of the domain; its linearization gives \eqref{a,to.tau}.  
\end{rem} 

\subsection{The normal operator}\label{S.normal.compl} The discussion above extends to the strata of the real relative moduli space $\ov \fM^\R(\Si, V)$ containing maps $f: C\ra \Si[m]$ with rubber components (i.e. $m\ne 0$); the target $\Si[m]$ is nodal but comes with an extra rescaling action, cf. \S\ref{S.rel.moduli}. To describe such strata, we adapt \cite[Definition~7.2 and Remark~7.7]{ip-rel} to our setting. It is based on the following simple observation: away from the infinity divisor, $\P_V=\P(N_V\oplus \cx)$ is equal to $N_V$, via the canonical embedding $w \mapsto [w,1]$ in homogenous coordinates. Thus for a map $\rho:C\ra V$, sections 
$\xi$ of $\rho^*N_V$ (with finitely many zeros and poles) can be equivalently regarded as lifts 
\bear\label{f=rho}
f=(\rho, \xi)
\eear
of $\rho$ to $\P_V$ (with finitely many points in the preimage of the zero and infinity divisor). 

Let $f:C \ra \Si[m]$ represent an equivalence class in $\ov\fM^\R(\Si, V)$, cf. \eqref{rel.moduli.jv.all}.  Let ${\bf y}\subset C$ denote the special points of the domain which are in the inverse image of the special points $\bf x$ of the target $\Si[m]$. Let 
\bear\label{lift.f.A}
\wt f: \wt C \ra \wt{\Si[m]}
\eear
denote the lift of $f$ defined as in  \eqref{f.lift.A}, where $\wt{\Si[m]}$ is the normalization of the building $\Si[m]$, thus a disjoint union of $\wt\Si$ and $m_i$ copies of $\P_{x^\pm_i}\subset \P_V$, cf. \eqref{def.PV}. We can then decompose the lift \eqref{lift.f.A} as a disjoint union of pieces 
\bear\label{f.decomp}
f_0:C_0 \ra \Si \subset \Si[m] \quad \mbox{ and } \quad f_r:C_r \ra \P_V, \; \mbox{ for } r\ge 1, 
\eear
where $C_0$ denotes the union of all level 0 (i.e. non-rubber) components, while the other pieces $C_r$, for $r\ge 1$, are symmetric curves consisting of (rubber) components mapped to one of the $m_i$ copies of 
$\P_{x_i^\pm}\subset \P_V$ of the building $\Si[m]$. 

\smallskip

 In particular, for $r\ge 1$, each rubber piece $f_r:C_r \ra \P_V$ of the decomposition \eqref{f.decomp} is a real map with prescribed multiplicity 
 $|\la_r|>0$ of contact to the $0$ and $\infty$-divisor of $\P^1_V$ at the special 
points ${\bf y}_r$ of $C_r$; equivalently $f_r$ is a nontrivial section $\xi_r$ of the pullback normal bundle $N_V=T\Si_{|V}$ to $V$ with zeros and poles of prescribed {\em order} $\la_r:{\bf y}_r \ra \Z$ at the special points and no other zeros/poles (recall that the order of a meromorphic section is {\em negative} at a pole and positive at a zero). Therefore 
\bear\label{sum.la.0}
\sum_{y\in {\bf y_r}} \la_r(y) =0 \quad \mbox{for all }r\ge 1, 
\eear
since the intersection numbers of $f_r$ with the zero and the infinity divisor of $\P_V$ are equal. 
Note that the $\cx^*$ action on $\P_V$ induces a natural $\cx^*$ action both on maps $f_r:C_r \ra \P_V$ and on sections $\xi_r$ of the pullback normal bundle. 
\smallskip

Moreover, along a stratum of $\ov\fM^\R(\Si, V)$ the topological type of the maps $f:C\ra \Si[m]$ and of their lifts  \eqref{lift.f.A}-\eqref{f.decomp} is fixed,  including the topological type of the domain, target and the contact multiplicities. Therefore the conditions describing a stratum of $\ov\fM^\R(\Si, V)$ represented by maps 
\best
f:C\ra \Si[m]\quad \text{ with $m\ne 0$}
\eest
can be expressed in terms of conditions on their lifts \eqref{lift.f.A}-\eqref{f.decomp}, or equivalently on their projections
\bear\label{f.decomp.2}
f_0:C_0 \ra \Si \subset \Si[m] \quad \mbox{ and } \quad \rho_r:C_r \ra V, \; \mbox{ for } r\ge 1
\eear
under the collapsing map to $\Si$, as in \cite[\S7]{ip-rel}.  
 If we denote by $\rho=p(f)$ the projection of $f$ under the collapsing map $\Si[m] \ra \Si$, then these conditions become (a) $\rho$ is a real $(J, \nu)$-holomorphic map and (b) its restriction $\rho_r:C_r \ra V$ to each rubber piece $C_r$ is a real map which satisfies the additional condition that there exits an element 
\bear\label{xi.not.0.A}
\xi_r \in \ker L^N_{\rho_r}
\eear
such that $\xi_r$ is non-trivial on each irreducible component of $C_r$. Here 
\begin{equation}\label{A.normal.op}
\begin{split}
&L^N_{\rho_r}:  W^{k, p, \de}_{\la_r; \bf y_r} \Gamma(C_r'; \rho_r^*N_V)^\R  \ra W^{k-1, p, \de}_{\la_r; \bf y_r} \La^{01}(C_r'; \rho_r^* N_V)^\R
\\
&L^N_{\rho_r}\xi= \del  \xi- \nabla_\xi \nu
\end{split}
\end{equation}
denotes the normal operator associated to any real map $\rho_r:C_r\ra V$, completed to a Fredholm operator on the punctured curve $C'_r=C_r\setminus \bf y_r$ in the $\la_r$-norms as in 
\eqref{wSob.maps.la.B}-\eqref{D1.weighted.B}, with $\de\in (0,1)$. By \eqref{ind.D1} and \eqref{sum.la.0} the index of this operator is 
\bear\label{A.ind.LN}
\ind L^N_{\rho_r}=c_1(\rho_r^*N_V)+\tfrac 12 \chi(C_r)-\sum_{y\in {\bf y_r}} \la_r(y)= \tfrac 12\chi(C_r).
\eear
Note that $V$ is 0-dimensional, so $E=\rho_r^*N_V$ is topologically trivial, thus $c_1(E)=0$.

Since $V=V^+\sqcup V^-$ consists only of pairs of complex conjugate points, then after further decomposing $\wt C$ into its connected components and reindexing the rubber ones, we may assume without loss of generality that we get a decomposition \eqref{f.decomp} where (a) $C_0$ is the union of all level 0 (i.e. non-rubber) components and 
(b) the other pieces $C_r$, for $r\ge 1$, are symmetric curves 
$$C_r= C_r^+\sqcup C_r^-$$ 
consisting of rubber components, where $C_r^\pm$ are {\em connected} and mapped to a point $p_r^\pm$ in $V^\pm$. Then the  
restriction of \eqref{A.normal.op} to $C^+_r$ is a {\em complex} linear Fredholm operator by the second condition in \eqref{nu.rel}, and therefore is a holomorphic $\del$-operator on $C_r^+$ with values in $(\rho_r^+)^*N_V$. Every nontrivial element $\xi$ in the kernel of $L^N_{\rho_r^+}$ is meromorphic and has finite $\la_r$-norm. Therefore  $\xi$ must have zeros/poles of {\em order} at least $\la_r(y)$ at the points $y\in {\bf y}_r^+:={\bf y}_r\cap C_r^+$ and no other poles (otherwise its $\la_r$-norm would not be finite). But the first Chern class of the bundle $(\rho_r^+)^*N_V$ is 0, and it is also equal to the sum of the orders of the zeros and poles of any nontrivial meromorphic section of it. Therefore $\xi$ must have order exactly $\la_r(y)$ at each $y\in {\bf y}_r^+$ and no other zeros nor poles. Note that the ratio of any two such sections is constant (on connected components).
\begin{rem}\label{R.xi.nor.A} An element $\xi_r$ satisfying \eqref{xi.not.0.A} can be equivalently regarded as a real map 
\bear\label{xi.in.ker.A}
\xi_r: C_r \ra \P_V \quad \mbox{  which is a solution of} \quad  \del  \xi = \nabla_\xi \nu
\eear 
and which has prescribed multiplicity of contact to 0 and $\infty$ at the special points ${\bf y}_r$ of $C_r$. The $\cx^*$ action rescaling $\P_V$ corresponds to the $\cx^*$-action on $\ker L^N$. Moreover as explained above, whenever $C_r^+$ is {\em connected} then either $\ker L^N=0$ or else $\ker L^N\cong \cx$. 
\end{rem}
\section{Families}
\subsection{Smoothings of symmetric curves}\label{S.A.smoothings} Let $(C_0, c_0)$ be a (possibly nodal) symmetric marked curve, and denote by $\bf x$ and $\bf y$ the collection of marked points and respectively nodes of $C_0$. In this section $C_0$ is allowed to have real nodes or real marked points. Flat families 
\bear\label{A.flat}
(\cal C, c)\ra (B, c_B)
\eear 
of deformations of $C_0$ are defined at the beginning of \cite[\S4.2]{gz};  their restriction to $b\in B^\R$ is a symmetric marked curve $C_b$ with the induced real structure $c_b$. 

Such families \eqref{A.flat} can be obtained by smoothing the nodes of $C_0$ with gluing parameters $\tau\in B$, using standard local models in a neighborhood $U$ of the nodes. This construction is described in \cite[\S6.2]{gz} in the case $C_0$ has a single node (which must be real), and in \cite[\S4.2]{GZ2} in the case $C_0$ has a pair of conjugate nodes. The general case when $C_0$ has more nodes is defined similarly as we briefly outline below.\smallskip

Choose holomorphic coordinates $z=z_x$ on $C_0$ around each marked point $x\in \bf x$ identifying a neighborhood of $x$ in $C_0$ with the unit disk $|z|\le 1$. We can furthermore arrange that in this neighborhood of $\bf x$ the real structure $c_0$ has a standard form in these coordinates, i.e. is given by 
\bear\label{loc.coord.C.marked}
z_x\mapsto \ov z_{c_0(x)}
\eear
as a map from the neighborhood of $x$ to the neighborhood of $c_0(x)$. Similarly, we can fix holomorphic coordinates $z_{1}, z_{2}$ on $C_0$ at each node $y\in {\bf y}$ identifying a neighborhood of $y$ in $C_0$ with the locus 
\bear\label{loc.coord.C.node}
z_1 z_2= 0, \quad |z_1|, |z_2| \le 1
\eear 
and in which the real structure on $C_0$ has a standard form (depending on the type of node). Note that the coordinates $z_1, z_2$ on $C_0$ at the node $y$ correspond to local holomorphic coordinates on the normalization $\wt C$ at the two points $y_1$, $y_2$ in the pre-image of $y\in \bf y$. %There are 3 types of nodes $y$, depending on whether the real structure on $\wt C$ (i) fixes $y_1$, $y_2$, (ii) swaps $y_1$ and $y_2$, or else (iii) maps them to other distinct points. 

Then in a neighborhood $U_y \subset \cal C$ of the node $y$, the fiber $C_\tau$ of $\cal C$ over $\tau=(\tau_y)_{y\in \bf y}\in B$ is given by 
\bear\label{loc.coord.C.sm.node}
z_1 z_2=\tau_y,  \quad \quad |z_1|, |z_2| \le 1, 
\eear
where $\tau_y$ is called the \textsf{gluing parameter} at the node $y$,  for  $y\in \bf y$. Here $B \subset \cx^{\bf y}$ is a sufficiently small neighborhood of 0 and its real structure $c_B$ is defined by $\tau_y\mapsto \ov \tau_y$ if $y$ is a real node and by $\tau_{y^\pm} \mapsto \ov \tau_{y^\mp}$ for a pair $y^\pm$ of conjugate nodes. The real structure on $\cal C$ also has a standard form in these coordinates, given by \cite[(6.11)]{gz} in a neighborhood of a real node, and by \cite[(4.9)]{GZ2} in the neighborhood of a pair of conjugate nodes. Finally, outside a smaller neighborhood $U'$ of the nodes the family \eqref{A.flat} is identified with the product family $(C_0 \setminus U', c_0)\ti (B, c_B)$.
\medskip

Intrinsically, the gluing parameters $\tau$ encode variations normal to the nodal stratum, cf. 
\eqref{SES.T.h1}-\eqref{tan-norDM}; for every node $y$ of $C_0$, the gluing parameter can be regarded as an element $\tau_y\in {\cal T}_{|y}$.  
Here $\T_{|y}=\T_{|y_1}\otimes \T_{|y_2}$ denotes the relative tangent bundle at the node $y$, and 
$\tau=(\tau_y)_{y\in \bf y}\in \T_{|\bf y}= \oplus_{y\in \bf y} \T_{|y}$.
\medskip

One can also include local deformations $C_{0u}$ of $C_0$ along the nodal stratum $\N$ containing $C_0$  e.g. obtained by varying $j$ away from the nodes and marked points. In that case, one gets flat families \eqref{A.flat} parametrized by pairs $(\tau, u)\in B=B_N \ti B_T$, where $u\in B_T$ parametrizes local deformations $C_{0u}$ of $C_0$ preserving its nodes, and $C_{\tau u}$ is obtained from $C_{0u}$ by smoothing its nodes with gluing parameter $\tau$. In particular, when $C_0$ is stable and 
$(\tau,u)$ vary in a sufficiently small neighborhood $\cal S$ of $0$ in ${\cal T}_{|\bf y} \oplus T_{C_0} \N=T_{C_0}\ov \M$, we get versal families
\bear\label{tau.tauC.A}
\begin{tikzcd}
(\cal C, c)\ar[r] &(\cal S, c_{\cal S})
\end{tikzcd}
\eear
called "local slices" at $C_0$. Their restriction to ${\cal S}^\R\subset T_{C_0}\ov \M^\R$ is a family $C_{\tau u}$ of symmetric marked curves, where $u\in T_{C_0} \N^\R$ is a variation along the nodal stratum  $\N^\R$ containing $C_0$ and 
$\tau\in ({\cal T}_{|\bf y})^\R$ is a normal variation to it,  cf. \S\ref{S.var.domain}. 
%Outside a smaller neighborhood $U'$ of the nodes, the family $(\cal C, c)\ra (B, c_B)$ is identified with the product family $(C_0 \setminus U', c_0)\ti (B, c_B)$, while on the larger neighborhood $U=\sqcup_y U_y$ of the nodes it has a standard form. 
%The restriction of \eqref{A.flat} over the fixed locus $B^\R$ is then a family of symmetric marked curves $C_\tau$, $\tau\in B^\R$. 
%Any Real bundle on $C_0$ can be extended using these local models to a Real bundle $(E, c_E)$ on the family \eqref{A.flat}, after fixing local trivializations in the neighborhood of the nodes. 

\subsection{The relative tangent bundle}\label{S.rel.tan.bd} Let $(\cal C, c)\ra (B, c_B)$ be a flat family of deformations of a fixed marked symmetric curve as in \S\ref{S.A.smoothings}.  Denote by
\bear\label{T.family.C}
(\T, c_\T) \ra (\C, c)
\eear
the relative tangent bundle to the fibers of this family. It is a holomorphic line bundle (locally free sheaf) over $\cal C$. Its dual $\T^\vee$ is the relative dualizing sheaf of the family: 
\begin{enumerate}[(i)]
\item in local coordinates \eqref{loc.coord.C.sm.node} near a node $y$, it is generated by the sections 
$\frac {d z_1} {z_1}$ and $\frac {dz_2} {z_2}$ modulo the relation $ 0=\frac {d (z_1z_2)}{z_1z_2}= \frac {d z_1} {z_1}+ \frac {dz_2}{z_2}$.
\item in local coordinates \eqref{loc.coord.C.marked} near a marked point $x$, it is generated by the section $\frac {d z} z$, where the marked point corresponds to $z=0$.
\end{enumerate} 
Both $\T$ and its dual $\T^\vee$ come with a natural real structure induced by $c$; see  \cite[Lemma 6.8]{gz} for the case $C_0$ has one real node and \cite[Lemma 4.8]{GZ2} for the case $C_0$ has a pair conjugate nodes (the general case follows similarly). The restriction of \eqref{T.family.C} to a fiber $C_b$ of $\cal C$ with $b\in B^\R$ is the relative tangent bundle \eqref{T.punctured} of $(C_b, c_b)$. Moreover, if $(\wt C, \wt c)$ denotes the 
normalization of $(C_0, c_0)$ then the pullback of $\T$ to $\wt C$ is $\T_{\wt C}$. 

By the Kodaira-Spencer deformation theory, infinitesimal automorphisms of the marked symmetric curve $C_0$ correspond to $H^0(\T_{C_0})^\R$ while infinitesimal variations correspond to 
$H^1(\T_{C_0})^\R$, cf. \cite[(5.22) and p. 736]{gz}. 
\subsection{Families of real CR operators}\label{A.S.families}  Let $(E, c_E)$ be a Real bundle over a flat family $(\cal C, c)$ of deformations of a fixed symmetric curve as in \S\ref{S.A.smoothings}. When $\nabla$ is a $c_E$-compatible (complex linear) connection on $E$, and $A$ is a smooth 0'th order term, they determine a (pull-back) family of CR operators 
\bear\label{fam.CR}
D_b: \Gamma(C_b; \iota_b^* E)^\R \ra \La^{01}(C_b; \iota_b^* E)^\R
\eear
as in \cite[(4.7)-(4.8)]{gz}, for $b \in B^\R$; here $\iota_b:C_b \ra \cal C$ is the inclusion. Note that (0,1)-forms on $C_b$ with values in $\iota_b^*E$ can be regarded as sections of $\iota_b^*F=\Omega^{0,1}_{C_b} \otimes_\cx \iota_b^* E$ where $F=\Omega^{0,1}_{\cal C/B} \otimes_\cx E$. Let  
\best
\det D =\cup_b \det D_b 
\eest 
denote the determinant line bundle of the family \eqref{fam.CR} of CR-operators, defined as in \cite[(4.9) and Appendix A]{gz}. As explained there, the fact that $\det D$ is a line bundle (i.e. is locally trivial over $B_\R$) follows from the proof of the gluing theorem (for linear CR operators). 
 \smallskip
 
Families of real CR-operators often arise by pulling back data associated to Real bundles over smooth targets as in \cite[\S4.3, p 711]{gz}. Specifically, assume $(X, J, c_X)$ is a 
a smooth almost complex manifold with an anti-complex involution, and $(E,c_E)\ra  (X, c_X)$ is Real bundle over it. Fix a $c_E$-compatible (complex linear) connection $\nabla$ on $E$. Given a family of real maps $f:C \ra X$, we can 
consider a family of CR operators 
\begin{equation}\label{lin.rel.A.E}
\begin{split}
&D_f: \Gamma (C;  f^*E)^\R \ra\La^{01}(C, f^*E)^\R, \quad 
\\
&D_f (\xi)=\del^\nabla_{f^*(E, c_E)}\xi +A_f \xi
\end{split}
\end{equation}
defined as in \cite[\S4.3, p 711]{gz}, where $A$ is a 0'th order term.  

 When $\Si$ is smooth, $\T_\Si\ra \Si$ is a Real bundle over a smooth target $X=\Si$, and we can similarly consider the family of linearizations 
\bear\label{lin.norms.22}
D_f: \Gamma (C; f^*\T_\Si)^\R \oplus T_C \ov \M^\R \ra \La^{01}(C; f^*\T_\Si)^\R
\eear
cf. \eqref{lin.f.rel.A} associated to a family of real maps $f:C\ra \Si$. Note that up to 0'th order terms, $D_f$ is equal to 
$\del_{f^*(\T_\Si,c_\T)}\oplus 0_{W_f}$, where $W_f= T_C \ov \M^\R$ and the considerations in \cite[Appendix A]{gz} continue to apply in this case.
%%%%%%%%%%%%
%%%%%%%%%%%%%
%%%%%%%%%%%%

\end{document}